\renewcommand{\Re}{\operatorname{Re}}
\renewcommand{\Im}{\operatorname{Im}}
\newcommand{\Real}{\mathbb R}
\begin{document} 
\title[Inhomogeneous NLS with inverse-square potential]{On the inhomogeneous NLS with inverse-square potential }
	\author[L. CAMPOS ]
	{LUCCAS CAMPOS}  
	
	\address{Luccas Campos  \hfill\break
	Department of Mathematics, UFMG, Brazil.}
	\email{luccasccampos@gmail.com}
	
	\author[C. M. GUZM\'AN ]
	{CARLOS M. GUZM\'AN } 
	
	\address{CARLOS M. GUZM\'AN \hfill\break
		Department of Mathematics, Fluminense Federal University, BRAZIL}
	\email{carlos.guz.j@gmail.com}

\begin{abstract}
We consider the inhomogeneous nonlinear Schr\"odinger equation with inverse-square potential in $\mathbb{R}^N$
$$
i u_t + \mathcal{L}_a u+\lambda |x|^{-b}|u|^\alpha u = 0,\;\;\mathcal{L}_a=\Delta -\frac{a}{|x|^2},
$$
where $\lambda=\pm1$, $\alpha,b>0$ and $a>-\frac{(N-2)^2}{4}$. We first establish sufficient conditions for global existence and blow-up in $H^1_a(\Real^N)$ for $\lambda=1$, using a
Gagliardo-Nirenberg-type estimate. In the sequel, we study local and global well-posedness in $H^1_a(\Real^N)$ in the $H^1$-subcritical case, applying the standard Strichartz estimates combined with the fixed point argument. The key to do that is to establish good estimates on the nonlinearity. Making use of these estimates, we also show a scattering criterion and construct a wave operator in $H^1_a(\Real^N)$, for the mass-supercritical and energy-subcritical case.  
\end{abstract}

\keywords{Inhomogeneous nonlinear Schr\"odinger equation; Local well-posedness; Global well-posedness; Blow-up;  Inverse-square potential}
\maketitle  
	\numberwithin{equation}{section}
	\newtheorem{theorem}{Theorem}[section]
	\newtheorem{proposition}[theorem]{Proposition}
	\newtheorem{lemma}[theorem]{Lemma}
	\newtheorem{corollary}[theorem]{Corollary}
	\newtheorem{remark}[theorem]{Remark}
	\newtheorem{definition}[theorem]{Definition}
	%%%%%%%%%%%%%%%%%%%%%%%%%%%%%%%%%%%%%%%%%%%%%%%%%%%%%%%%%%%%%%%%%%%%%
	%%%%%%%%%%%%%%%%%%%%%%%%%%%%%%%%%%%%%%%%%%%%%%%%%%%%%%%%%%%%%%%%%%%%%
	
\section{Introduction}
\indent In this paper, we study the initial value problem (IVP) associated to the inhomogeneous nonlinear Schr\"odinger equation (INLS$_a$ for short)
\begin{equation}\label{INLSa}
\begin{cases}
i\partial_tu-\mathcal{L}_a u + \lambda|x|^{-b}|u|^\alpha u =0,  \;\;\;t\in \mathbb{R} ,\;x\in \mathbb{R}^N,\\
u(0,x)=u_0(x), 
\end{cases}
\end{equation}
where $N\geq 3$, $\mathcal{L}_a=-\Delta +\frac{a}{|x|^2}$ with $a>-\frac{(N-2)^2}{4}$ and $\alpha, b>0$. The equation is called ``focusing INLS$_a$" when $\lambda= +1$ and ``defocusing INLS$_a$" when $\lambda= -1$. The restriction on $a$ comes from the sharp Hardy inequality, namely
\begin{equation}
\frac{(N-2)^2}{4}\int |x|^{-2}|u(x)|^2 dx \leq \int |\nabla u(x)|dx,\;\;\forall\;u\in H^1,  
\end{equation}
which guarantees that $\mathcal{L}_a$ is a positive operator.
%. It  appears in a variety of physical settings and is an interesting borderline case from the mathematical point of view. 
It is a model from various physical contexts, for example, in nonlinear optical systems with spatially dependent interactions (see e.g. \cite{belmonte2007lie} and the references therein). In particular, when $a=0$, it can be thought of as modeling
inhomogeneities in the medium in which the wave propagates (see for instance \cite{MALOMED1}). When $b=0$, the equation \eqref{INLSa} appears in several physical settings, such a  quantum field equations or black hole solutions of the Einstein’s equations (see e.g. \cite{MR0397192}).   
%The equation is called \textit{focusing BNLS} when $\lambda= -1$ and \textit{defocusing BNLS} when $\lambda= 1$. 

The INLS$_a$ model can be seen as an extension to the Schr\"odinger equation. For instance, when $a=b=0$ we have the classical nonlinear Schr\"odinger (denoted by NLS) equation, extensively studied over the three decades. (see \cite{Bourgain}, \cite{CAZENAVEBOOK}, \cite{FELGUS}, \cite{Tao} and the references therein). The case $b=0$ and $a\neq 0$ is known as the NLS equation with inverse square potential, denoted by NLS$_{a}$ equation, that is,
\begin{equation}\label{NLSa}
i\partial_tu-\mathcal{L}_a u + |u|^\alpha u =0.
\end{equation}
This equation has also been intensively studied in recent years (see for instance, \cite{Burq}, \cite{Murphy}, \cite{Murphy1}, \cite{Okazawa2012}, \cite{Zhang}). Moreover, when $a=0$ and $b\neq 0$ we have the inhomogeneous NLS equation, denoted by (INLS), i.e.,
\begin{equation}\label{INLS}
i\partial_tu+\Delta u +|x|^{-b} |u|^\alpha u =0,
\end{equation}
which also has received substantial attention recently (see e.g. \cite{CARLOS}, \cite{paper2}, \cite{paper3}, \cite{Campos}, \cite{Dinh2017}, \cite{Murphy}).  

Similarly to the INLS, the INLS$_a$ equation is invariant under the scaling, $u_\mu(t,x)=\mu^{\frac{2-b}{\alpha}}u(\mu^2 t,\mu x)$ for $\mu >0$. %This means if $u$  is a solution of \eqref{INLSa}, with initial data $u_0$, so is $u_\mu$ with initial data $u_{\mu,0}=\mu^{\frac{2-b}{\alpha}}u_0(\mu x)$.  
A straightforward computation yields
$$
\|u_{0,\mu}\|_{\dot{H}^s}=\mu^{s-\frac{N}{2}+\frac{2-b}{\alpha}}\|u_0\|_{\dot{H}^s},
$$
implying that the scale-invariant Sobolev space is $\dot{H}^{s_c}(\mathbb{R}^N)$, with $s_c=\frac{N}{2}-\frac{2-b}{\alpha}$, the so called \textit{critical Sobolev index}. If $s_c = 0$ (or $\alpha = \frac{4-2b}{N}$) the (IVP) is known as mass-critical or $L^2$-critical; if $s_c<0$ (or $0<\alpha<\frac{4-2b}{N}$) it is called mass-subcritical or $L^2$-subcritical; if $0<s_c<2$, \eqref{INLSa} is known as mass-supercritical and energy-subcritical (or intercritical). In additional, %In this paper, we are interested to study the intercritical case, i.e., $\frac{4-2b}{N}<\alpha<2^*$,
%where 
%\begin{equation}\label{def2*}
%2^*=\begin{cases}
%\frac{4-2b}{N-2},  \;N\geq 3,\\
%\infty,\;\;\;\;N=2. 
%\end{cases}
%\end{equation}
%On the other hand, 
solutions to \eqref{INLSa} conserve their mass and energy, defined respectively by 
\begin{equation}\label{mass}
M[u(t)]=\int_{\mathbb{R}^N}|u(t,x)|^2dx=M[u_0],
\end{equation}
\begin{equation}\label{energy}
E_a[u(t)]=\frac{1}{2}\int_{\mathbb{R}^N}| \nabla u(t,x)|^2dx+
\frac{a}{2}\int_{\mathbb{R}^N}|x|^{-2}| u(t,x)|^2dx+
\frac{\lambda}{\alpha +2} \int_{\mathbb{R}^N} |u|^{\alpha +2}dx=E_a[u_0].
\end{equation}
When $a=0$, we denote $E_a$ by $E_0$. 

\ We review now some recent developments in $H^1(\Real^N)$, for the particular cases, i.e., NLS$_{a}$ and INLS models, starting with the NLS$_a$ equation. Okazawa-Suzuki-Yokota \cite{Okazawa2012}, by the energy method, showed local well-posedness in the energy-subcritical case, for $N\geq 3$ and $a>-\tfrac{(N-2)^2}{4}$.
They also proved that the solutions are global if $\lambda=-1$ or $0<\alpha< \frac{4}{N}$ and $\lambda=1$. %Applying the fixed point argument (Kato's method), 
Zhang-Zheng \cite{Zhang} studied the defocusing case, establishing well posedness and scattering for $\tfrac{4}{N}<\alpha<\tfrac{4}{N-2}$, assuming $a \geq 0$ and $N = 3$, or $a\geq  -\tfrac{(N-2)^2}{4}+\tfrac{4}{(\alpha+2)^2}$ and $N\geq 4$. 
Recently, Killip-Murphy-Visan-Zheng in \cite{Murphy} considered the focusing $3D$ cubic NLS$_a$. They established well-posedness and scattering in $H^1(\mathbb{R}^N)$ if $a>-\tfrac{1}{4}$. Later, Lu-Miao-Murphy \cite{Murphy1} extended the result of \cite{Murphy} to all $L^2$-supercritical, energy subcritical nonlinearities, in dimensions, $3\leq N\leq 6$, with
\begin{equation}\label{conditionNLSa}
\begin{cases} a>-\tfrac{1}{4}\;\;\;\;\;\qquad \qquad\qquad \qquad \textnormal{if}\;\;\;N=3,\;\;\tfrac{4}{3}<\alpha\leq 2\\
a>-\frac{(N-2)^2}{4}+(\frac{N-2}{2}-\frac{1}{\alpha})^2\;\;\textnormal{if}\;\;\;3\leq N\leq 6,\;\; \max\{\frac{2}{N-2},\frac{4}{N}\}<\alpha< \frac{4}{N-2}.
\end{cases}
\end{equation}
On the other hand, the INLS equation was first studied by Genoud-Stuart \cite{GENSTU} via the energy method. For $N\geq 1$ and $0<b<\min\{N,2\}$, they showed local well posedness in $H^1(\Real^N)$ for the $H^1$-subcritical case and global well possedness in the mass-subcritical case. In the mass-critical case, Genoud in \cite{GENOUD} established global well-posedness in $H^1(\mathbb{R}^N)$, provided that the mass of the initial data is below that of the associated ground state. This result was extended in the intercritical case by Farah \cite{LG} and he also showed that the solution blows-up in finite time (see also \cite{Dinh-blowup}). The second author in \cite{CARLOS}, using Kato's method, proved local well-posedness in $H^1(\mathbb{R}^N)$, for the energy subcritical case in dimensions $N\geq 4$ and $0 <b < 2$. Cho-Lee \cite{Cho-Lee} treated the case $N=3$ for $0<b<\frac{3}{2}$ and Dinh \cite{Dinh2017} the case $N=2$ for $0<b<1$. Furthermore, in the intercritical case, the second author in \cite{CARLOS} also established a small data global theory in $H^1(\mathbb{R}^N)$ for $N\geq 4$, the first author \cite{Campos} treated the case $N=3$ and Cardoso-Farah-Guzmán \cite{Cardoso-Farah-Guzman} the case $N=2$. In all these works the range of $b$ is the same one where local well-posedness was obtained.

Motivated by the aforementioned papers, our main interest in this paper is to prove similar results for the INLS$_a$ equation in $H^1_a(\Real^N)$. The well-posedness theory was already studied by %Okazawa-Suzuki-Yokota \cite{Okazawa2012} (see 
%also 
Suzuki \cite{Suzuki}. Using the energy method, the author showed that, if\footnote{It is worth mentioning that in \cite{Suzuki} the author considered \eqref{INLSa} with $a=-\frac{(N-2)^2}{4}$, the critical coefficient. The proof for the case $a>\frac{(N-2)^2}{4}$ is an immediate consequence of the previous one. } 
\begin{equation}\label{LWPEM}
N\geq 3,\qquad 0<\alpha < \tfrac{4-2b}{N-2}\qquad a> -\tfrac{(N-2)^2}{4}, \qquad \textnormal{and}\qquad 0<b<2, \end{equation}
then \eqref{INLSa} is locally well-posed in $H^1_a(\mathbb{R}^N)$ satisfying $
u \in  C \left([0, T);H^1(\mathbb{R}^N)\right) \cap C^1 \left([0, T);H^{-1}(\mathbb{R}^N)\right)$  for some $T > 0$.
It was also proved that any local solution of the IVP (\ref{INLSa}) with $u_0\in H^1_a(\mathbb{R}^N)$ extends globally in time if either $\lambda=-1$ (defocusing case) or $0<\alpha<\tfrac{4-2b}{N}$ for $\lambda=1$ (focusing, $L^2$-subcritical case). 

Our first goal here is to study global existence and blow-up in $H^1(\Real^N)$ for $\lambda=1$ for both the $L^2$-critical and the intercritical ($L^2$-supercriticial and $H^1$-subcritical) case. For that, we apply a sharp Gagliardo-Nirenberg type estimate. In order to do so, we prove the existence of a ground state.

\begin{proposition}\label{exist_ground} Let $N \geq 3$ and $\alpha$, $a$, $b$ as in \eqref{LWPEM} and $\lambda=1$. There exists a positive solution to the elliptic equation $$ \mathcal{L}_a Q+Q-|x|^{-b}|Q|^{\alpha}Q=0$$ in $H^1_a(\mathbb{R}^N)$. Moreover, all possible solutions have the same mass, the same $\dot{H}_a^1$ norm and the same energy.
\end{proposition}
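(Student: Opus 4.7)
The plan is to obtain $Q$ as a maximizer of the sharp Gagliardo--Nirenberg-type functional
\begin{equation*}
J(u) = \frac{\int_{\mathbb{R}^N} |x|^{-b}|u|^{\alpha+2}\, dx}{\|u\|_{\dot H^1_a}^{\gamma}\, \|u\|_{L^2}^{\alpha+2-\gamma}}, \qquad \gamma = \frac{N\alpha + 2b}{2},
\end{equation*}
whose supremum $C_{\mathrm{GN}}$ is finite. The exponent $\gamma$ is forced by the invariance $J(u)=J(\lambda\,u(\mu\,\cdot))$, and finiteness of $\sup J$ will follow from the weighted Gagliardo--Nirenberg inequality for $\|\nabla u\|_{L^2}$ combined with the equivalence $\|u\|_{\dot H^1_a} \simeq \|\nabla u\|_{L^2}$, valid under $a>-\tfrac{(N-2)^2}{4}$ by sharp Hardy.

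To prove attainment I would take a maximizing sequence $\{u_n\}$, normalize $\|u_n\|_{L^2}=\|u_n\|_{\dot H^1_a}=1$ by the scale invariance, and extract a weak limit $u^\ast\in H^1_a$. The crux is to promote this to strong convergence of $\int|x|^{-b}|u_n|^{\alpha+2}$; this is the main technical obstacle. The absence of translation invariance caused by the radially decreasing weights $|x|^{-b}$ and $|x|^{-2}$ rules out escape to infinity, while the normalization $\|u_n\|_{\dot H^1_a}=1$ obstructs bubble concentration at the origin, so a Lions-type concentration-compactness dichotomy (or a profile decomposition adapted to $\mathcal{L}_a$ in the spirit of \cite{Murphy,Murphy1}) should deliver the required strong convergence along a subsequence. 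When $a\leq 0$ a softer route is available: symmetric decreasing rearrangement preserves $\|u\|_{L^2}$ and increases $\int|x|^{-b}|u|^{\alpha+2}$ by Hardy--Littlewood (since the weight is radially decreasing), while simultaneously decreasing $\|u\|_{\dot H^1_a}$ by P\'olya--Szeg\H{o} together with the sign of $a$, so one may look for maximizers among radial functions, where the usual Strauss compactness applies.

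Once an optimizer $u^\ast$ is produced, the Euler--Lagrange equation carries two Lagrange multipliers that can be absorbed by a rescaling $Q(x)=\eta_1 u^\ast(\eta_2 x)$ to yield $\mathcal{L}_a Q+Q=|x|^{-b}|Q|^\alpha Q$. Positivity follows from $J(|u^\ast|)\geq J(u^\ast)$: the numerator and the potential term $a\int|x|^{-2}|u|^2$ are invariant under $u\mapsto|u|$, whereas $\|\nabla|u|\|_{L^2}\leq\|\nabla u\|_{L^2}$; hence $|u^\ast|$ is also a maximizer, and the strong maximum principle applied on $\mathbb{R}^N\setminus\{0\}$ (where the coefficient $a/|x|^2$ is smooth) promotes $Q\geq 0$ to $Q>0$.

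Finally, the uniqueness of mass, $\dot H^1_a$-norm, and energy among these maximizers comes from three scalar identities in the unknowns $A=\|Q\|_{L^2}^2$, $B=\|Q\|_{\dot H^1_a}^2$, and $C=\int|x|^{-b}|Q|^{\alpha+2}$. Pairing the equation with $Q$ yields $B+A=C$; the Pohozaev identity from pairing with $x\cdot\nabla Q$, using the homogeneities $\operatorname{div}(x|x|^{-2})=(N-2)|x|^{-2}$ and $\operatorname{div}(x|x|^{-b})=(N-b)|x|^{-b}$, gives $\tfrac{N-2}{2}B+\tfrac{N}{2}A=\tfrac{N-b}{\alpha+2}C$; and attainment of the sharp constant gives $J(Q)=C_{\mathrm{GN}}$. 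Under the subcritical assumption $\alpha<\tfrac{4-2b}{N-2}$ these three equations are independent and pin down $A$, $B$, $C$ uniquely, so all maximizers share the same mass and $\dot H^1_a$-norm, and consequently the same energy $E_a[Q]=\tfrac12 B+\tfrac{1}{\alpha+2}C$.
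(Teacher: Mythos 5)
Your overall strategy coincides with the paper's: optimize the scale-invariant Gagliardo--Nirenberg functional (your $J$ is the reciprocal of the paper's Weinstein functional $J_a$), pass to the Euler--Lagrange equation, absorb the two multipliers by the rescaling $Q(x)=\eta_1 u^{\ast}(\eta_2 x)$, and derive the Pohozaev identities $B+A=C$ and $\tfrac{N-2}{2}B+\tfrac{N}{2}A=\tfrac{N-b}{\alpha+2}C$, which together with attainment of the sharp constant fix $A$, $B$, $C$; those identities match \eqref{pohozaev}--\eqref{pohozaev2} exactly. The place where you diverge --- and where your argument is not complete --- is precisely the step you flag as ``the main technical obstacle.'' The paper does not need concentration-compactness, profile decompositions, or rearrangement: it proves outright (Lemma \ref{compacidade}) that the embedding $H^1_a(\mathbb{R}^N)\hookrightarrow L^{\alpha+2}_b(\mathbb{R}^N)$ is \emph{compact}, by splitting $\int|x|^{-b}|f_n-f|^{\alpha+2}$ into $\{|x|\le R\}$ and $\{|x|>R\}$: the exterior piece is $O(R^{-b})$ uniformly, and the interior piece is controlled via H\"older by $\|f_n-f\|_{L^{N(\alpha+2)/(N-b-\epsilon)}}$ on a ball, which tends to zero by Rellich--Kondrachov because $\alpha+2<\tfrac{N(\alpha+2)}{N-b-\epsilon}<\tfrac{2N}{N-2}$ under the subcriticality hypothesis $\alpha<\tfrac{4-2b}{N-2}$. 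This single elementary lemma delivers the strong convergence of the normalized minimizing sequence in $L^{\alpha+2}_b$ and makes the whole existence proof a few lines.

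Two further remarks. First, your heuristic that ``the normalization $\|u_n\|_{\dot H^1_a}=1$ obstructs bubble concentration at the origin'' is not the right reason: a concentrating bubble $\lambda_n^{(N-2)/2}\phi(\lambda_n x)$ keeps its $\dot H^1$ norm fixed regardless of normalization. What kills concentration is strict subcriticality of $\alpha+2$ with respect to the weighted critical exponent $\tfrac{2(N-b)}{N-2}$ (equivalently $\alpha<\tfrac{4-2b}{N-2}$), under which the weighted $L^{\alpha+2}_b$ mass of a bubble vanishes in the limit --- the same subcriticality that powers the Rellich--Kondrachov step above. Second, your closing identification of ``all possible solutions'' with ``these maximizers'' is the honest reading: the Pohozaev identities alone determine only the ratios $B/A$ and $C/A$, and the third relation $J(Q)=C_{\mathrm{GN}}$ is available only for optimizers; the paper's Lemma \ref{lem_poho} implicitly makes the same restriction, so you are on equal footing there.
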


Using the variational analysis associated to Proposition \ref{exist_ground}, we have thresholds for global existence and blow-up. 

\begin{theorem}[$L^2$-critical case]\label{GBCrit}
Let $N$, $a$, $b$ as in \eqref{LWPEM}, $\lambda=1$ and $\alpha = \frac{4-2b}{N}$. Suppose that $u$ is the solution to \eqref{INLSa} with initial data $u_0 \in H^1_a(\Real^N)$. If 

\begin{itemize}
    \item[a)]  (Global existence) $\|u_0\|_{L^2} < \|Q\|_{L^2}$, then the solution is uniformly bounded in $H^1_a$, and therefore extends globally in time;
    \item[b)] (Blow-up) $E[u_0]<0$ and either $|x|u_0 \in L^2(\Real^N)$ or $u_0$ is radial, then $u$ blows-up in finite positive and negative times.
\end{itemize}
\end{theorem}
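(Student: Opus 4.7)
The plan is to follow the mass-critical focusing NLS blueprint, adapted to the inverse-square potential and the inhomogeneous weight $|x|^{-b}$.

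For \textbf{part (a)}, I would first derive a sharp Gagliardo--Nirenberg-type inequality
$$\int_{\mathbb{R}^N} |x|^{-b}|u|^{\alpha+2}\,dx \;\leq\; C_{opt}\,\|u\|_{L^2}^{\alpha}\,\|u\|_{\dot H^1_a}^{2}, \qquad u\in H^1_a(\mathbb{R}^N),$$
whose optimal constant is, by Proposition~\ref{exist_ground}, attained by the ground state $Q$. Testing the elliptic equation for $Q$ against $Q$ itself and against the dilation $x\cdot\nabla Q$ yields two Pohozaev-type identities which, together with the mass-critical relation $\alpha=(4-2b)/N$, collapse to the clean relation $\tfrac{2 C_{opt}}{\alpha+2}\|Q\|_{L^2}^{\alpha}=1$. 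Substituting into the conserved energy and using mass conservation,
$$E_a[u_0]\;\geq\;\tfrac12\,\|u(t)\|_{\dot H^1_a}^{2}\Bigl(1-\bigl(\|u_0\|_{L^2}/\|Q\|_{L^2}\bigr)^{\alpha}\Bigr).$$
Under the hypothesis $\|u_0\|_{L^2}<\|Q\|_{L^2}$, the parenthesis is a positive constant, giving a uniform-in-time control on $\|u(t)\|_{\dot H^1_a}$ and hence on the full $H^1_a$ norm. Combined with the blow-up alternative of the local theory around \eqref{LWPEM}, global existence follows.

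For \textbf{part (b)}, I would run a Glassey virial argument. In the finite-variance case, set $V(t)=\int|x|^{2}|u(t,x)|^{2}\,dx$; the key computational observation is $[|x|^{2},a/|x|^{2}]=0$, so the inverse-square term integrates cleanly into the kinetic part and produces
$$V''(t) \;=\; 8\,\|u(t)\|_{\dot H^1_a}^{2}\;-\;\frac{4(N\alpha+2b)}{\alpha+2}\int|x|^{-b}|u|^{\alpha+2}\,dx.$$
The mass-critical relation $N\alpha+2b=4$ collapses this to $V''(t)=16\,E_a[u_0]<0$, so $V$ is strictly concave and would become negative in finite positive (and, by time-reversibility, negative) time, contradicting $V\geq 0$. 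For the radial case, where $|x|u_0$ need not be square-integrable, I would implement a truncated-virial argument in the spirit of Ogawa--Tsutsumi with $V_R(t)=\int \phi_R(x)|u|^{2}\,dx$, where $\phi_R$ is a radial cutoff equal to $|x|^{2}$ on $\{|x|\leq R\}$ and bounded thereafter. The identity for $V_R''$ reproduces $16 E_a[u_0]$ plus an error localized in $\{|x|\geq R\}$; the radial Sobolev embedding $\|u\|_{L^{\infty}(|x|\geq R)}\lesssim R^{-(N-1)/2}\|u\|_{H^1_a}^{1/2}\|u\|_{L^2}^{1/2}$ combined with mass conservation allows me to absorb that remainder upon choosing $R$ large enough to force $V_R''(t)\leq 8 E_a[u_0]<0$, yielding finite-time blow-up as before.

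I expect the most delicate points to be (i) establishing the sharp Gagliardo--Nirenberg inequality and pinning down its extremizers as ground states from Proposition~\ref{exist_ground}, which requires Pohozaev identities that track both the inverse-square and the inhomogeneous-weight contributions carefully; and (ii) closing the radial truncated-virial argument in the mass-critical regime, where, in contrast to the intercritical case, we lack an \emph{a priori} $H^1_a$ bound and so must lean on mass conservation and radial decay to absorb the boundary remainders.
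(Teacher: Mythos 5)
Your part (a) and the finite-variance case of part (b) are correct and follow essentially the same route as the paper: the sharp Gagliardo--Nirenberg inequality with constant pinned down by the Pohozaev identities (your relation $\tfrac{2C_{opt}}{\alpha+2}\|Q\|_{L^2}^{\alpha}=1$ is exactly the normalization used in the paper's proof), energy conservation for the uniform $\dot H^1_a$ bound, and Glassey's identity $V''=16E_a[u_0]$ for finite-variance data.

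There is, however, a genuine gap in your radial blow-up argument. After Strauss, the boundary remainder is of the form
\begin{equation}
\frac{1}{R^{b}}\int_{|x|\geq R}|u|^{\alpha+2}\,dx \;\lesssim\; \frac{1}{R^{\,b+\frac{\alpha(N-1)}{2}}}\,\|\nabla u(t)\|_{L^2}^{\alpha/2}\,\|u_0\|_{L^2}^{\frac{\alpha}{2}+2},
\end{equation}
and the factor $\|\nabla u(t)\|_{L^2}^{\alpha/2}$ is \emph{not} bounded uniformly in time --- indeed you are trying to prove it blows up --- so ``choosing $R$ large enough'' at time zero cannot force $V_R''(t)\leq 8E_a[u_0]$ for all $t$. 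In the intercritical case one has a leftover negative multiple of $\|\nabla u\|_{L^2}^2$ in $V_R''$ to absorb this, but in the mass-critical case the coefficient of the global kinetic term vanishes ($N\alpha+2b=4$), and the only negative gradient contribution available is the \emph{localized} term $-4\int(2-\psi''(|x|/R))|\nabla u|^2\,dx$, supported where $|x|\geq R$. To close the argument you must therefore (i) apply Strauss to a weighted function (the paper uses $\eta_R u$, where $\eta_R$ encodes the cutoff errors and is supported in $\{|x|\geq R\}$), (ii) use Young's inequality to split the remainder as $\epsilon\|\eta_R\nabla u\|_{L^2}^2+C(\epsilon)R^{-2}\|u_0\|_{L^2}^2$, and (iii) choose the truncation $\psi$ and $\epsilon$ so that $4\bigl(2-\psi''(\cdot/R)\bigr)-\epsilon\,\eta_R^2\geq 0$ pointwise, independently of $R$; this forces a specific polynomial matching of the vanishing rates of $2-\psi''$ and $\eta_R$ near $|x|=R$ (the paper takes $\psi(r)=r^2-\tfrac{(r-1)^4}{2}$ on $1<r\leq 2$ precisely for this purpose). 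Your sketch acknowledges the difficulty but the mechanism you propose --- mass conservation plus large $R$ --- does not by itself supply the absorption.
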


\begin{theorem}[Intercritical case]\label{GBInter} Let $N$, $a$, $b$ as in \eqref{LWPEM}, $\lambda=1$ and $\frac{4-2b}{N}< \alpha <\frac{4-2b}{N-2}$. Suppose that $u$ is the solution of \eqref{INLSa} with initial data $u_0 \in H^1_a(\Real^N)$. Assume
\begin{equation}
    M(u_0)^{1-s_c} E_a(u_0)^{s_c} < M(Q)^{1-s_c}E_a(Q)^{s_c}.
\end{equation}
If
\begin{itemize}
    \item[a)] (Global existence)  
    \begin{equation}\label{MK<1}\|u_0 \|^{1-s_c}_{L^2}\| \sqrt{\mathcal{L}_a}u_0 \|^{s_c}_{L^2}  < \|Q\|^{1-s_c}_{L^2} \|\sqrt{\mathcal{L}_a} Q\|^{s_c}_{L^2},
\end{equation}
then the solution is uniformly bounded in $H^1_a$, and therefore extends globally in time;
    \item[b)] (Blow-up)
\begin{equation}
\|u_0 \|^{\frac{1-s_c}{s_c}}_{L^2}\|\sqrt{\mathcal{L}_a} u(t) \|_{L^2}  > \|Q\|^{1-s_c}_{L^2} \|\sqrt{\mathcal{L}_a} Q\|^{s_c}_{L^2}
\end{equation}
%In addition, the $H^1$ norm of the solution is unbounded and, if $u_0$ is radial or has finite variance, then $u$ blows up in finite positive and negative time.
and, in addition, if $|x|u_0 \in L^2(\Real^N)$ or $u_0$ is radial, then $u$ blows-up in finite positive and negative times.
\end{itemize}
% \begin{equation}
%\|u_0 \|^{\frac{1-s_c}{s_c}}_{L^2}\|\sqrt{\mathcal{L}_a} u(t) \|_{L^2}  > \|Q\|^{1-s_c}_{L^2} \|\sqrt{\mathcal{L}_a} Q\|^{s_c}_{L^2}.
%\end{equation}

%where $Q$ is a positive solution to the elliptic equation
%\begin{equation}\label{eq_ellip_ground}
%\mathcal{L}_a Q - Q + |x|^{-b}|Q|^{\alpha}Q = 0.
%\end{equation}
\end{theorem}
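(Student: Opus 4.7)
The strategy is the standard Lions/Weinstein / Holmer–Roudenko dichotomy adapted to the operator $\mathcal{L}_a$, once Proposition~\ref{exist_ground} supplies the ground state $Q$. The first task is to extract from $Q$ a sharp Gagliardo–Nirenberg type inequality
\[
\int_{\mathbb R^N}|x|^{-b}|u|^{\alpha+2}\,dx \;\le\; C_{GN}\,\|u\|_{L^2}^{\alpha+2-\frac{N\alpha+2b}{2}}\,\|\sqrt{\mathcal{L}_a}u\|_{L^2}^{\frac{N\alpha+2b}{2}},
\]
with $C_{GN}$ attained by $Q$ and expressible via the Pohozaev identities as a ratio of $\|Q\|_{L^2}$ and $\|\sqrt{\mathcal{L}_a}Q\|_{L^2}$. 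Writing the energy as
\[
E_a[u]=\tfrac12\|\sqrt{\mathcal{L}_a}u\|_{L^2}^2-\tfrac{1}{\alpha+2}\int|x|^{-b}|u|^{\alpha+2}\,dx,
\]
plugging in the GN bound, and multiplying by a suitable power of $M(u)$ gives a scale-invariant inequality of the shape $f(y)\le M(u)^{1-s_c}E_a(u)^{s_c}/M(Q)^{1-s_c}E_a(Q)^{s_c}$, where $y=\|u\|_{L^2}^{1-s_c}\|\sqrt{\mathcal{L}_a}u\|_{L^2}^{s_c}/(\|Q\|_{L^2}^{1-s_c}\|\sqrt{\mathcal{L}_a}Q\|_{L^2}^{s_c})$ and $f$ attains its maximum $1$ at $y=1$.

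For part (a), the mass–energy hypothesis makes the right-hand side strictly less than $1$, so $y(0)<1$ together with continuity of $t\mapsto\|\sqrt{\mathcal L_a}u(t)\|_{L^2}$ (from the local theory of Suzuki, conservation of $M$, $E_a$) traps $y(t)<1$ for all $t$ in the maximal interval. This yields a uniform $H_a^1$ bound and hence global existence by the blow-up alternative.

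For part (b), I would compute the localized virial. For $\phi(x)$ smooth radial and real, standard manipulations with the commutator $[\mathcal L_a,\cdot]$ give
\[
\frac{d^2}{dt^2}\!\int\phi|u|^2\,dx = \int\!\Big(4\partial_j\partial_k\phi\,\Re(\partial_j u\,\overline{\partial_k u}) - \Delta^2\phi\,|u|^2 + 4a\tfrac{x\cdot\nabla\phi}{|x|^4}|u|^2 - \tfrac{2\alpha}{\alpha+2}\Delta\phi\,|x|^{-b}|u|^{\alpha+2} - \tfrac{4}{\alpha+2}\nabla\phi\cdot\nabla(|x|^{-b})\,|u|^{\alpha+2}\Big)dx.
\]
Taking $\phi(x)=|x|^2$ (legitimate if $|x|u_0\in L^2$ by a standard approximation argument) reduces this to
\[
V''(t) = 16\,E_a[u(t)] - \frac{4(N\alpha+2b-4)}{\alpha+2}\int|x|^{-b}|u|^{\alpha+2}\,dx.
\]
Using again the GN inequality and the reverse inequality on $y(t)$, which propagates by the same continuity argument, one shows that there is $\delta>0$ with $V''(t)\le -\delta$ on the maximal interval; two integrations force $V(t)<0$ in finite time, contradicting positivity and giving blow-up in both time directions.

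The main obstacle is the radial case $|x|u_0\notin L^2$. Here I would replace $|x|^2$ by a radial cut-off $\phi_R$ equal to $|x|^2$ for $|x|\le R$ and bounded by $CR^2$ globally, with $\phi_R''\le 2$ and $\Delta\phi_R\le 2N$. The error terms involve $\int_{|x|\ge R}|\nabla u|^2$, $R^{-2}\int|u|^2$, the inverse-square contribution $\int_{|x|\ge R}|x|^{-2}|u|^2$ (all controlled by $\|u\|_{H_a^1}$), and a remainder $\int_{|x|\ge R}|x|^{-b}|u|^{\alpha+2}$ which is tamed by the radial Sobolev embedding $\|u\|_{L^\infty(|x|\ge R)}\lesssim R^{-(N-1)/2}\|u\|_{H_a^1}^{1/2}\|\nabla u\|_{L^2}^{1/2}$ (valid since $a>-(N-2)^2/4$ makes $H_a^1$ equivalent to $H^1$ on radial functions up to controllable constants). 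Choosing $R$ large, these errors are absorbed into the negative main term coming from $16E_a[u]-\frac{4(N\alpha+2b-4)}{\alpha+2}\int|x|^{-b}|u|^{\alpha+2}$, which by the sharp GN inequality and the propagated threshold is $\le -\delta<0$. One then concludes blow-up from $V_R''(t)\le-\delta/2$ together with $V_R(t)\ge0$ exactly as in the finite-variance case.
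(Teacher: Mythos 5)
Your overall route is the same as the paper's: the sharp Gagliardo--Nirenberg inequality with constant attained by $Q$ plus the Pohozaev identities give a scale-invariant trapping of $y(t)=\|u\|_{L^2}^{1-s_c}\|\sqrt{\mathcal{L}_a}u(t)\|_{L^2}^{s_c}$ by continuity and conservation laws (the paper packages this as Lemmas \ref{lem_coerc_1} and \ref{lem_dicot}), which settles part (a); and the virial identity with weight $|x|^2$, respectively a radial truncation thereof, settles part (b). Your computation $V''=16E_a-\tfrac{4(N\alpha+2b-4)}{\alpha+2}\int|x|^{-b}|u|^{\alpha+2}$ agrees with the paper's identity, and the finite-variance case is fine.

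There is, however, one genuine soft spot in your radial argument, and it is exactly the point the paper flags as the main difficulty. After applying the Strauss estimate and Young's inequality to $\tfrac{1}{R^b}\int_{|x|\ge R}|x|^{-b}|u|^{\alpha+2}$, the error splits as $\epsilon\|\nabla u\|_{L^2}^2+C(\epsilon)R^{-\sigma}\,(\text{mass terms})$. The second piece is indeed killed by taking $R$ large, but the first piece is \emph{not}: $\|\nabla u(t)\|_{L^2}$ is unbounded in the blow-up scenario, so it cannot be ``absorbed into'' a main term that is merely $\le-\delta$ for a fixed constant $\delta$, nor can it be made small by enlarging $R$. Likewise your parenthetical claim that $\int_{|x|\ge R}|\nabla u|^2$ is ``controlled by $\|u\|_{H^1_a}$'' is not a uniform-in-time bound; the paper instead exploits the sign of $-4\int(2-\psi''(|x|/R))|\nabla u|^2$ coming from $\psi''\le 2$. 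The missing ingredient is a \emph{strengthened} coercivity statement --- the paper's Lemma \ref{lem_coerc_2} --- asserting that under the above-threshold hypotheses one has
\begin{equation}
(1+\epsilon)\int\bigl|\sqrt{\mathcal{L}_a}u\bigr|^2\,dx+\Bigl(\tfrac{N-b}{\alpha+2}-\tfrac{N}{2}\Bigr)\int|x|^{-b}|u|^{\alpha+2}\,dx\le-\eta<0
\end{equation}
for some $\epsilon,\eta>0$ depending only on $\delta_0$, $Q$ and $M(u_0)$; it is this extra $\epsilon$ of room in the gradient term that swallows the $\epsilon\|\nabla u\|_{L^2}^2$ error uniformly in time. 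Without that refinement the radial blow-up conclusion does not follow from your sketch.
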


%\begin{remark}
%Note that, global well-posedness in the mass-critical case, $\alpha=\frac{4-2b}{N}$, was not showed. It is still an open problem.
%\end{remark}

The main tools for proving Theorems \ref{GBCrit} and \ref{GBInter} are the coercivity given by the variational analysis,  and the virial identities. The main difficulty is to appropriately control the error terms appearing when one truncates the virial identity in the radial case. The $L^2$-critical case is especially delicate, since the criticality gives us less room for error terms.

Note that the local well-posedness showed in \cite{Suzuki} as well as the global results, Theorems \ref{GBCrit}a) and \ref{GBInter}a) ensure the existence of solutions to \eqref{INLSa}. However we do not know
whether or not the solutions satisfy $u \in L^q(I;H^{1,r}_a)$ for any $L
^2$-admissible pair ($q,r$), which is a key property to study other problems such as scattering for example. To obtain this extra information and working towards the proof of scattering in a next work, we establish the local and global well-posedness for \eqref{INLSa} via Kato’s method, which is based on the contraction mapping principle and the Strichartz estimates. We start with the local theory for the energy-subcritical case.
\begin{theorem}\label{LWP}
Assume that $N \geq 3 $ and $0\leq b<\min\{\frac{N}{2},2\}$. If $u_0 \in H^1_a(\mathbb{R}^N)$ and
\begin{equation}\label{conditionINLSa}
\begin{cases} a>-\tfrac{(N-2)^2}{4}\;\;\;\;\qquad \qquad \qquad \qquad\ \textnormal{if}\;\;\;\;\tfrac{2-2b}{N}<\alpha\leq\; \frac{2-2b}{N-2}\quad \textnormal{and} \;\; 0\leq b<1,\\
a>-\tfrac{(N-2)^2}{4}+\left(\tfrac{\alpha(N-2)-(2-2b)}{2(\alpha+1)}\right)^2\;\;\textnormal{if}\;\;\; \max\{0,\tfrac{2-2b}{N-2}\}<\alpha< \tfrac{4-2b}{N-2}.
\end{cases}
\end{equation}
Then there exists $T=T(\|u_0\|_{H^1_a},N,\alpha,b)$ and a unique solution of \eqref{INLSa} satisfying
$$
u \in C\left([0,T];H^1_a(\mathbb{R}^N) \right) \cap L^q\left([0,T];H^{1,r}_a(\mathbb{R}^N)    \right),
$$
where ($q,r$) is any $L^2$-admissible pair. 
\end{theorem}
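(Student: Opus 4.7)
The plan is to run Kato's fixed-point argument on the Duhamel formulation
\[
\Phi(u)(t)=e^{-it\mathcal{L}_a}u_0+i\lambda\int_0^t e^{-i(t-s)\mathcal{L}_a}\bigl(|x|^{-b}|u|^\alpha u\bigr)(s)\,ds,
\]
contracting on a closed ball in
\[
X_T=C\bigl([0,T];H^1_a(\mathbb{R}^N)\bigr)\cap L^q\bigl([0,T];H^{1,r}_a(\mathbb{R}^N)\bigr)
\]
for a carefully chosen $L^2$-admissible pair $(q,r)$. The linear pieces are handled by the standard Strichartz estimates for $e^{-it\mathcal{L}_a}$ (available in the whole admissible range), together with the fact that $\sqrt{\mathcal{L}_a}$ commutes with the propagator and the equivalence $\|\sqrt{\mathcal{L}_a}f\|_{L^r}\sim\|\nabla f\|_{L^r}$, which under the restriction on $a$ in \eqref{conditionINLSa} holds for the range of $r$ dictated by the admissible pair we select. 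In this way everything reduces to controlling $|x|^{-b}|u|^\alpha u$ and $\nabla\bigl(|x|^{-b}|u|^\alpha u\bigr)$ in a dual Strichartz norm $L^{\tilde q'}_tL^{\tilde r'}_x$.

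The heart of the proof is the nonlinear estimate. I would split $\mathbb{R}^N=B\cup B^c$, where $B=\{|x|\le 1\}$, so that on $B$ the factor $|x|^{-b}$ is locally singular and on $B^c$ it is bounded. On $B$, I would apply Hölder to isolate $\||x|^{-b}\|_{L^\gamma(B)}$ with $\gamma$ chosen so that $\gamma b<N$ (here the hypothesis $b<N/2$ enters), then distribute the remaining $|u|^{\alpha+1}$ or $|u|^\alpha |\nabla u|$ between a Strichartz norm and a term controlled by Sobolev embedding $H^{1,r}_a\hookrightarrow L^p$. On $B^c$ the same Hölder bookkeeping is done with $|x|^{-b}$ treated as $L^\infty$. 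The exponents are balanced so that time integrability produces a surplus $T^\theta$ for some $\theta>0$, yielding
\[
\|\Phi(u)\|_{X_T}\le C\|u_0\|_{H^1_a}+CT^{\theta}\|u\|_{X_T}^{\alpha+1},
\]
and an analogous Lipschitz estimate for $\Phi(u)-\Phi(v)$ using the pointwise bound $\bigl||u|^\alpha u-|v|^\alpha v\bigr|\lesssim(|u|^\alpha+|v|^\alpha)|u-v|$. For $T$ small these close up in a ball of radius $2C\|u_0\|_{H^1_a}$, producing a unique fixed point.

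For the gradient piece one writes $\nabla(|x|^{-b}|u|^\alpha u)=-b|x|^{-b-2}x|u|^\alpha u+(\alpha+1)|x|^{-b}|u|^\alpha\nabla u$. The first term carries the stronger singularity $|x|^{-b-1}$ and is the source of the extra lower bound $a>-\tfrac{(N-2)^2}{4}+\bigl(\tfrac{\alpha(N-2)-(2-2b)}{2(\alpha+1)}\bigr)^2$ in \eqref{conditionINLSa}: in the subrange $\alpha>\tfrac{2-2b}{N-2}$ the plain Sobolev embedding is not enough to absorb the weight, and one has to use a weighted Hardy/Sobolev-type inequality which is valid only for $L^r$ exponents inside the range where $\|\sqrt{\mathcal{L}_a}\cdot\|_{L^r}\sim\|\nabla\cdot\|_{L^r}$. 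The two branches in \eqref{conditionINLSa} precisely correspond to the two regimes: in the first (low $\alpha$, $b<1$), the natural pair from Strichartz already lies in the admissible range for any $a>-\tfrac{(N-2)^2}{4}$; in the second, the matching forces a smaller range of $r$ and hence a more restrictive lower bound on $a$.

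The main obstacle will be this nonlinear estimate in the intercritical regime: one has to simultaneously (i) pick an admissible pair $(q,r)$ so that both the norm $L^q_tL^r_x$ embeds well into $L^p$ and $|x|^{-b-1}|u|^\alpha$ is integrable near the origin in the relevant dual norm, (ii) stay inside the $r$-window where the Sobolev-norm equivalence for $\mathcal{L}_a$ holds, and (iii) retain a positive power of $T$ to get the contraction. Balancing (i)--(iii) is what produces the threshold on $a$ in \eqref{conditionINLSa}; once the exponent bookkeeping is done, the rest of the argument is a standard contraction.
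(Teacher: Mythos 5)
Your proposal follows essentially the same route as the paper's proof: Duhamel formulation plus contraction mapping, Strichartz estimates for $e^{-it\mathcal{L}_a}$, the splitting $\mathbb{R}^N=B\cup B^C$ with H\"older applied to $\||x|^{-b}\|_{L^\gamma}$ and $\||x|^{-b-1}\|_{L^d}$, and the correct identification of the lower bound on $a$ with the window of exponents $r$ in which $\|\sqrt{\mathcal{L}_a}\,\cdot\,\|_{L^r}\sim\|\nabla\cdot\|_{L^r}$ (this is exactly the content of Lemmas \ref{Lemalocal} and \ref{Lemalocal1}, where the first branch of \eqref{conditionINLSa} corresponds to placing $\nabla u$ at an exponent near $r=2$, always inside the window, and the second branch to the constraint $r<\tfrac{N}{1+\rho}$, i.e.\ $2\rho(\alpha+1)<N-2b$). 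The one point to make explicit when carrying this out: since the pointwise bound $\bigl||u|^\alpha u-|v|^\alpha v\bigr|\lesssim(|u|^\alpha+|v|^\alpha)|u-v|$ does not control derivatives of $u-v$, the contraction must be run with the weaker metric $d_T(u,v)=\|u-v\|_{S(L^2;[0,T])}$ on the complete set defined by the full $H^1_a$-level Strichartz bound, exactly as the paper does.
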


New challenges and technical obstructions appear with the presence of the functions $|x|^{-2}$ and $|x|^{-b}$ in (INLS$_a$), related especially to the problem of equivalence of Sobolev spaces. This leads us to impose some technical restrictions on the parameters $\alpha$, $b$ and $a$, given in \eqref{conditionINLSa}. %The difficulty to prove it is looking for the admissible pairs    
\begin{remark}
Observe that Theorem 1.1 also holds for $b=0$, thus we have that (NLS$_a$) is locally well-posed in $H^1(\mathbb{R}^N)$. In this particular case, we have a local result a little diferent from Luo-Miao-Murphy in \cite{Murphy1}, they showed local well posedness for $3\leq N\leq 6$, assuming \eqref{conditionNLSa}. Our result holds for any dimensions $N\geq 3$, however the condition on $a$ is weaker than Luo-Miao-Murphy'result. Note also that we improve the range of the parameter $\alpha$ to\footnote{Note that, in Theorem \ref{LWP} we have the condition $\alpha>\frac{2-2b}{N}$, however when $b=0$ we can even consider $\alpha =\frac{2}{N}$ (see Lemma \ref{Lemalocal1}).} $\alpha\geq \frac{N}{2}$. On the other hand, if $b<1$ then we have a lower bound for the parameter $\alpha$ in Theorem \ref{LWP} and if $b\geq1$ we then need $\alpha>0$.
\end{remark}

%\begin{remark}
%If $b<1$, then we have a lower bound for the parameter $\alpha$ in Theorem \ref{LWP}. On the other hand, if $b\geq1$ we then need $\alpha>0$. Moreover, as a consequence of Theorem \ref{LWP}, we have that the $3D$ cubic INLS$_a$ is locally well-posed in $H^1(\mathbb{R}^3)$. It can be seen as an extension of a local result by Killip-Murphy-Visan-Zheng \cite{Murphy} to the INLS$_a$ model.% 
%\end{remark}

As an immediately consequence we obtain the following result.
\begin{corollary}\label{C1LWP} Assume one of the following conditions:
\begin{itemize}
\item [(i)] $N\geq3$, $1\leq b<\min\left\{\frac{N}{2},4\right\}$, $0<\alpha<\frac{4-2b}{N-2}$ and $a>-\frac{(N-2)^2}{4}+\left(\frac{\alpha(N-2)-(2-2b)}{2(\alpha+1)}\right)^2$;
\item [(ii)] $N=3$, $\alpha=2$, $0<b<1$ and $a>-\frac{1}{4}+\frac{b^2}{9}$.
\end{itemize}
If $u_0 \in H^1_a(\mathbb{R}^N)$, then the same result of Theorem \ref{LWP} holds.% then there exists $T=T(\|u_0\|_{H^1_a},N,\alpha,b)$ and a unique solution of \eqref{INLSa} satisfying
%$$
%u \in C\left([0,T];H^1_a(\mathbb{R}^N) \right) \cap L^q\left([0,T];H^{1,r}_a(\mathbb{R}^N) \right),
%$$
%where ($q,r$) is any $S$-admissible pair.
\end{corollary}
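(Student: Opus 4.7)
The corollary is stated as an immediate consequence of Theorem \ref{LWP}, so my approach is to verify that the hypotheses in $(i)$ and $(ii)$ each land inside one branch of the parameter condition \eqref{conditionINLSa}, after which the conclusion transfers without further work.

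For part $(i)$, I first observe that the combination $\alpha > 0$ and $\alpha < (4-2b)/(N-2)$ already forces $b < 2$, so the standing restriction $b < \min\{N/2, 2\}$ of Theorem \ref{LWP} is automatically satisfied (in particular the upper bound $b<4$ in the hypothesis is redundant). Since $b \geq 1$, both cutoffs $(2-2b)/N$ and $(2-2b)/(N-2)$ are non-positive, so the first branch of \eqref{conditionINLSa}, which requires $\alpha \leq (2-2b)/(N-2)$, is vacuous for positive $\alpha$. Only the second branch remains, and $\max\{0,(2-2b)/(N-2)\} = 0$ in this regime; the resulting condition on $(\alpha, a)$ then reads exactly as in $(i)$, and Theorem \ref{LWP} applies.

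For part $(ii)$ I specialize $N = 3$, $\alpha = 2$, and $0 < b < 1$. A short arithmetic check yields
\[
\frac{2-2b}{N-2} = 2-2b \;<\; 2 = \alpha \;<\; 4-2b = \frac{4-2b}{N-2},
\]
placing $(N,\alpha,b)$ in the second branch of \eqref{conditionINLSa}. Substituting $N = 3$ and $\alpha = 2$ into the threshold on $a$ gives
\[
-\tfrac{(N-2)^2}{4} + \left(\tfrac{\alpha(N-2)-(2-2b)}{2(\alpha+1)}\right)^2 \;=\; -\tfrac{1}{4} + \left(\tfrac{2b}{6}\right)^2 \;=\; -\tfrac{1}{4} + \tfrac{b^2}{9},
\]
matching the bound in $(ii)$. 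Theorem \ref{LWP} then delivers the local well-posedness statement.

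The only non-trivial step is this case analysis: recognizing that in each of $(i)$ and $(ii)$ the first branch of \eqref{conditionINLSa} is automatically excluded and the second branch reproduces exactly the stated condition on $a$. Once this matching is carried out, the local well-posedness conclusion is inherited verbatim from Theorem \ref{LWP} with no additional fixed-point or Strichartz work required.
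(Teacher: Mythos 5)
Your verification is correct and is exactly the argument the paper intends: the corollary is stated as an immediate consequence of Theorem \ref{LWP}, and your case analysis (first branch of \eqref{conditionINLSa} excluded since $b\geq 1$ in (i), respectively $\alpha=2>2-2b$ in (ii); second branch reproducing the stated threshold on $a$, with $b<2$ forced by $\alpha<\frac{4-2b}{N-2}$) is the straightforward check the authors leave implicit.
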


\ It is worth mentioning that Corollary \ref{C1LWP} (ii) can be seen as an extension of a local result by Killip-Murphy-Visan-Zheng \cite{Murphy} to the INLS$_a$ model.

\begin{remark}\label{Energy method}
The range of the parameters $b$ and $a$ in Theorem \ref{LWP} are more restricted than in \cite{Suzuki}. That is, applying the energy method we obtain a better result than using the Kato method, however in Theorem \ref{LWP} we obtain an extra information on the solution.  
\end{remark}

\ In the sequel we establish small data global results in $H^1_a(\mathbb{R}^N)$, for $N\geq 3$ and $\tfrac{4-2b}{N}<\alpha<\frac{4-2b}{N-2}$. Since we use the Strichartz estimates (see Section \ref{sec2}) we show global well-posedness for radial and non-radial initial data. Here, $e^{-it\mathcal{L}_a}u_0$
denotes the solution to the
linear problem associated to \eqref{INLSa} and the Strichartz norm $\| \cdot \|_{S(\dot{H}^{s_c})}$ is defined in Section \ref{sec2ponto1}. 
\begin{theorem}[Radial small data theory]\label{GWP1}
Let $a>0$, $0<b<\min\{0,\frac{N}{2}\}$ and $\frac{4-2b}{N}<\alpha<\frac{4-2b}{N-2}$ $(\alpha <3-2b$ if $N=3)$. If $u_0\in H^1_a(\mathbb{R}^N)$ radial with $\|u_0\|_{H^1}\leq M$ then there exists a unique global solution $u$ of \eqref{INLSa} such that if $\|e^{-it\mathcal{L}_a}u_0\|_{S(\dot{H}^{s_c})}<\delta_{sd}$, there exists a unique global solution $u$ of \eqref{INLSa} such that
$$
\|u\|_{S(\dot{H}^{s_c})}\leq  2\|e^{-it\mathcal{L}_a}u_0\|_{S(\dot{H}^{s_c})} \quad \textnormal{and}\quad \|u\|_{S\left(L^2\right)}+\|\sqrt{\mathcal{L}_a}  u\|_{S\left(L^2\right)}\leq 2c\|u_0\|_{H^1_a},
$$
for some universal constant $c>0$.
%Assume one of the following conditions\footnote{See Section $2$ for the notations and definitions used in the theorem.}:
%\begin{itemize}
%\item [(i)] $N\geq 4$, $0<b<2$ and  $\|e^{-it\sqrt{\mathcal{L}_a}}u_0\|_{S(\dot{H}^{s_c})}<\delta_{sd}$
%\item [(ii)] $N=3$, $0<b<\frac{3}{2}$ and  $\|e^{-it\sqrt{\mathcal{L}_a}}u_0\|_{\widetilde{S}(\dot{H}^{s_c})}<\delta_{sd}$, 
%\end{itemize}
%for some $\delta_{sd}=\delta_{sd}(M)>0$. Then there exists a unique global solution $u$ of \eqref{INLSa} such that
%\begin{equation}
%\|u\|_{S(\dot{H}^{s_c})}\leq  2\|e^{-it\sqrt{\mathcal{L}_a}}u_0\|_{S(\dot{H}^{s_c})}\; \textnormal{if}\;\ N\geq 4 \quad \textnormal{and}\quad \|u\|_{\widetilde{S}(\dot{H}^{s_c})}\leq  2\|e^{-it\sqrt{\mathcal{L}_a}}u_0\|_{\widetilde{S}(\dot{H}^{s_c})} \; \textnormal{if}\; N=3,  \end{equation}
%and
%\begin{equation}
%\|u\|_{S\left(L^2\right)}+\|\sqrt{\mathcal{L}_a}  u\|_{S\left(L^2\right)}\leq 2c\|u_0\|_{H^1_a},\;\;\textnormal{where}\;\;c>0. 
%\end{equation}
%for some universal constant $c>0$. 
\end{theorem}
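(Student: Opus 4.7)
The plan is to run a Kato-style contraction argument. The natural map to iterate is the Duhamel operator
\begin{equation}
\Phi(u)(t) = e^{-it\mathcal{L}_a}u_0 + i\lambda \int_0^t e^{-i(t-s)\mathcal{L}_a}\bigl(|x|^{-b}|u|^\alpha u\bigr)(s)\,ds,
\end{equation}
considered on the complete metric space
\begin{equation}
X = \bigl\{ u : \|u\|_{S(\dot H^{s_c})} \leq 2\|e^{-it\mathcal{L}_a}u_0\|_{S(\dot H^{s_c})}, \ \|u\|_{S(L^2)} + \|\sqrt{\mathcal{L}_a}u\|_{S(L^2)} \leq 2c\|u_0\|_{H^1_a} \bigr\},
\end{equation}
equipped with the distance $d(u,v) = \|u-v\|_{S(\dot H^{s_c})} + \|u-v\|_{S(L^2)}$. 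The global nature of the solution will come for free once $\Phi$ is shown to contract, since the estimates are uniform on all of $\mathbb{R}$.

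First I would apply the homogeneous and inhomogeneous Strichartz estimates (recalled in Section \ref{sec2}) at both the $\dot{H}^{s_c}$ and the $L^2$ levels. This reduces the whole argument to controlling the nonlinearity $|x|^{-b}|u|^\alpha u$, together with $\sqrt{\mathcal{L}_a}(|x|^{-b}|u|^\alpha u)$, in appropriate dual Strichartz spaces, by quantities of the form $\|u\|_{S(\dot H^{s_c})}^{\alpha} \bigl( \|u\|_{S(\dot H^{s_c})} + \|\sqrt{\mathcal{L}_a}u\|_{S(L^2)} \bigr)$ and similarly for differences. To deal with the singular weight I would split $\mathbb{R}^N=\{|x|\le 1\}\cup\{|x|>1\}$: on the exterior the weight is bounded and the usual Hölder/Sobolev chain adapted from the homogeneous intercritical problem applies; on the interior region, the assumption that $u_0$ is radial lets me invoke the radial Sobolev inequality $|x|^{(N-1)/2}|u(x)|\lesssim \|u\|_{H^1}$ to absorb a portion of $|x|^{-b}$, at the cost of trading some $\dot H^{s_c}$-control for $H^1_a$-control.

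The main obstacle, as usual in this setting, is handling the derivative that falls on the nonlinearity, because $\sqrt{\mathcal{L}_a}$ does not obey a clean Leibniz rule. I would circumvent this by using the equivalence of Sobolev spaces for $a>0$, which yields $\|\sqrt{\mathcal{L}_a}f\|_{L^r}\simeq \|\nabla f\|_{L^r}$ on the Lebesgue exponents involved in the $L^2$-admissible Strichartz triples used, and then applying the standard fractional chain and product rules to $\nabla(|x|^{-b}|u|^\alpha u)$. The hypotheses $a>0$, $0<b<\min\{2,N/2\}$ and $\tfrac{4-2b}{N}<\alpha<\tfrac{4-2b}{N-2}$ (with the restriction $\alpha<3-2b$ when $N=3$) are precisely what keeps the relevant exponents inside the range where this equivalence and the radial Sobolev embedding are simultaneously valid; verifying the arithmetic of these admissible pairs is the delicate bookkeeping of the proof. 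Once these nonlinear estimates are in place, choosing $\delta_{sd}$ small enough in terms of $M$ and the Strichartz constants makes $\Phi$ a strict contraction on $X$, producing the unique global solution with the advertised bounds; uniqueness in the full Strichartz class follows by the standard continuity argument, and persistence of regularity together with the $H^1_a$-bound follow from the second component of the norm defining $X$.
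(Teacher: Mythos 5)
Your contraction scheme, the choice of the space $X$ and its metric, the interior/exterior splitting of the weight, and the use of the equivalence $\|\sqrt{\mathcal{L}_a}f\|_{L^r}\sim\|\nabla f\|_{L^r}$ for $a>0$ all match the paper's proof. The genuine gap is in where you put the radiality to work. You propose to use the radial Sobolev (Strauss) inequality on the interior region $\{|x|\le 1\}$ to ``absorb a portion of $|x|^{-b}$''; this cannot work, since the Strauss bound $|u(x)|\lesssim |x|^{-(N-1)/2}\|u\|_{H^1}$ degenerates as $x\to 0$ and gives no gain near the origin. There the singularity is handled, exactly as in Lemmas \ref{Lemalocal} and \ref{LG1}, by H\"older together with the local integrability $\||x|^{-b}\|_{L^\gamma(B)}<\infty$ whenever $\tfrac{N}{\gamma}-b>0$, which requires no symmetry at all.

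The actual, indispensable role of radiality in Theorem \ref{GWP1} is different: to close the $S(\dot H^{s_c})$ component of the contraction you must bound the Duhamel term in an $\dot H^{s_c}$-admissible norm by the nonlinearity measured in a dual $\dot H^{-s_c}$-admissible norm, i.e.\ you need the non-$L^2$-dual inhomogeneous Strichartz estimate \eqref{SE5}. In this paper that estimate (Lemma \ref{Strichartzpotential}) is only available for radial data and $a\ge 0$, because it rests on the dispersive estimate of Lemma \ref{Dispersive equ}, which is stated for radial functions. Without invoking \eqref{SE5}, your scheme has no way to control $\|\Phi(u)\|_{S(\dot H^{s_c})}$ — this is precisely why the paper's non-radial small-data results (Theorems \ref{GWPN=3} and \ref{GWP2}) must replace $S(\dot H^{s_c})$ by the weaker $\widetilde{S}(\dot H^{s_c})$ norm and route the critical estimate through Sobolev embedding and the $L^2$-dual estimate \eqref{SE3}. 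A secondary omission: the paper's nonlinear estimates peel off a small power $\theta$ of $u$ into $L^\infty_t H^1_a$ (controlled by $M$), which is what makes the H\"older exponent bookkeeping for the weighted nonlinearity close; the bound $\|u\|^{\alpha}_{S(\dot H^{s_c})}\bigl(\|u\|_{S(\dot H^{s_c})}+\|\sqrt{\mathcal{L}_a}u\|_{S(L^2)}\bigr)$ you aim for, without this $\theta$-split, is generally not attainable here.
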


In Theorem \ref{GWP1}, when $N= 3$, we have an extra restriction on $\alpha$, namely $\alpha <3-2b$. To reach $\alpha<4-2b$, we need to restrict the parameter $b$. This restriction comes from the need of $\alpha> 1$ in the fixed point argument. To this end, we use the norm $\|u\|_{ L^{\bar{a}}_tL^{\bar{r}}_x}$ denoted by%Before stating the theore, we denote 
\begin{equation}\label{quaseadmissivel}
\|u\|_{\widetilde{S}(\dot{H}^{s_c})}=\|u\|_{ L^{\bar{a}}_tL^{\bar{r}}_x}, 
\end{equation}
where ($\bar{a},\bar{r})$ only satisfies $\bar{r} \geq 2$ and the relation of $\dot{H}^{s_c}$-admissible pair, i.e., $\frac{2}{\bar{a}}=\frac{N}{2}-\frac{N}{\bar{r}}-s_c$ and not the remaining conditions 
(see Section \ref{sec2}).  
\begin{theorem}\label{GWPN=3}
Let $N=3$, $a>0$ and $0<b<\tfrac{3}{2}$. Assume $u_0\in H^1_a(\Real^N)$ and one of the following conditions:
\begin{itemize}
\item [(i)] $\max\{1,\tfrac{4-2b}{N}\}<\alpha<4-2b$;
\item [(ii)] $\tfrac{4-2b}{N}<\alpha<4-2b$\; and\; $0<b<\frac{1}{2}$;
\item [(iii)] $3-2b\leq \alpha<4-2b$ \;and \;$0<b<1$,
\end{itemize}
then the same result as in Theorem \ref{GWP1} holds, replacing $\|\cdot \|_{ S(\dot{H}^{s_c})}$ by $\|\cdot\|_{\widetilde{S}(\dot{H}^{s_c})}$.
\end{theorem}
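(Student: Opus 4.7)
The plan is to mirror the proof of Theorem \ref{GWP1} via a contraction mapping argument, but applied to the Duhamel operator
\begin{equation}
\Phi(u)(t)=e^{-it\mathcal{L}_a}u_0-i\lambda\int_0^t e^{-i(t-s)\mathcal{L}_a}\bigl(|x|^{-b}|u|^{\alpha}u\bigr)(s)\,ds
\end{equation}
on the complete metric space
\begin{equation}
B=\Bigl\{u:\;\|u\|_{\widetilde{S}(\dot{H}^{s_c})}\leq 2\|e^{-it\mathcal{L}_a}u_0\|_{\widetilde{S}(\dot{H}^{s_c})},\;\;\|u\|_{S(L^2)}+\|\sqrt{\mathcal{L}_a}u\|_{S(L^2)}\leq 2c\|u_0\|_{H^1_a}\Bigr\},
\end{equation}
endowed with the distance induced by $\|\cdot\|_{\widetilde{S}(\dot{H}^{s_c})}+\|\cdot\|_{S(L^2)}$. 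The payoff of using the quasi-admissible pair $(\bar{a},\bar{r})$ from \eqref{quaseadmissivel} is one extra degree of freedom in the spatial Lebesgue index, which we will exploit to absorb the weight $|x|^{-b}$ for those ranges of $\alpha$ that were forbidden by the rigid admissibility constraint when $N=3$.

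First I would record the Strichartz estimates for $e^{-it\mathcal{L}_a}$ (referring to the tools listed in Section \ref{sec2}), together with the equivalence between $\|\sqrt{\mathcal{L}_a}\,\cdot\|_{L^r}$ and $\|\nabla\,\cdot\|_{L^r}$ on an appropriate range of $r$ dictated by the hypothesis $a>0$ and the Riesz-transform bounds needed below; this equivalence is what allows us to move derivatives onto $u$ after the weighted Hölder inequality. Next comes the crucial step: establishing the nonlinear estimates
\begin{equation}
\bigl\||x|^{-b}|u|^{\alpha}u\bigr\|_{S'(L^2)}\lesssim \|u\|_{\widetilde{S}(\dot{H}^{s_c})}^{\alpha}\|u\|_{S(L^2)},\qquad \bigl\|\sqrt{\mathcal{L}_a}\bigl(|x|^{-b}|u|^{\alpha}u\bigr)\bigr\|_{S'(L^2)}\lesssim \|u\|_{\widetilde{S}(\dot{H}^{s_c})}^{\alpha}\|\sqrt{\mathcal{L}_a}u\|_{S(L^2)},
\end{equation}
together with the corresponding difference estimate controlling $\|\Phi(u)-\Phi(v)\|_{\widetilde{S}(\dot{H}^{s_c})}+\|\Phi(u)-\Phi(v)\|_{S(L^2)}$. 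For each case (i)–(iii) I would choose explicit exponents $(\bar{a},\bar{r})$, split $|x|^{-b}=|x|^{-b}\chi_{|x|\le 1}+|x|^{-b}\chi_{|x|>1}$ when needed, and use the Sobolev embedding $\dot{H}^{s_c}\hookrightarrow L^{r^*}$ to convert an $\widetilde{S}(\dot{H}^{s_c})$ norm into a weighted Lebesgue norm compatible with Hölder.

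The main obstacle is precisely the reason this $N=3$ result has to be stated separately from Theorem \ref{GWP1}: the difference estimate for the nonlinearity forces the Lipschitz-type inequality $\bigl||u|^{\alpha}u-|v|^{\alpha}v\bigr|\lesssim (|u|^{\alpha}+|v|^{\alpha})|u-v|$, which is only valid when $\alpha\geq 1$, while the admissibility and weight constraints in dimension three leave gaps in the achievable range of $(\alpha,b)$. Case (i) directly imposes $\alpha>1$; for cases (ii) and (iii), where $\alpha$ may fall below one, the restriction on $b$ (respectively $b<\tfrac12$ and $b<1$) is what allows us to choose a quasi-admissible $\bar{r}$ large enough so that the weighted Hölder estimate closes without invoking a nonexistent Lipschitz bound, either by pairing $|x|^{-b}$ with a local $L^p$ norm coming from radial Sobolev embedding or by distributing the weight across factors so that each factor enters only to a power at least one. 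Balancing these exponents against the admissibility relation $\tfrac{2}{\bar{a}}=\tfrac{N}{2}-\tfrac{N}{\bar{r}}-s_c$ in each case is where the restrictions on $b$ originate.

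Once the nonlinear estimates are in place, the contraction property and the self-mapping of $B$ follow from choosing $\delta_{sd}$ small relative to $M$, in the standard way. Uniqueness in the full Strichartz class and the asserted bounds on $u$ then follow from the fixed-point identity and the same estimates applied to $\Phi(u)=u$.
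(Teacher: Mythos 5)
There is a genuine gap at the heart of your argument: you never explain how to estimate the Duhamel term in the $\widetilde{S}(\dot{H}^{s_c})$ norm. The pair $(\bar{a},\bar{r})$ defining $\|\cdot\|_{\widetilde{S}(\dot{H}^{s_c})}$ is \emph{not} admissible (that is the whole point of introducing it), so no Strichartz inequality applies directly to $\bigl\|\int_0^t e^{-i(t-s)\mathcal{L}_a}F\,ds\bigr\|_{L^{\bar{a}}_tL^{\bar{r}}_x}$, and the two nonlinear estimates you propose (for $F$ and $\sqrt{\mathcal{L}_a}F$ in $S'(L^2)$) only control the $S(L^2)$-level components of the ball $B$; they give nothing for its $\widetilde{S}(\dot{H}^{s_c})$ component, so you cannot verify that $\Phi$ maps $B$ into $B$. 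The paper's proof supplies exactly the missing device (see Remark \ref{important*}): one bounds the Duhamel term in $L^{\bar a}_tL^{\bar r}_x$ by $\|D^{s_c}(\cdot)\|_{L^{\bar a}_tL^{\bar p}_x}$ via Sobolev embedding, where $(\bar a,\bar p)$ \emph{is} $S$-admissible, and then applies the $L^2$-level inhomogeneous estimate \eqref{SE3} to $\sqrt{\mathcal{L}_a}^{s_c}F$; this in turn requires the fractional estimate of Lemma \ref{LG2}(iii), obtained by interpolating the estimates for $F$ and $\sqrt{\mathcal{L}_a}F$. Without this step, or an equivalent substitute, the contraction cannot be closed in the stated class.

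Two further points are misstated. First, the pointwise bound $\bigl||u|^{\alpha}u-|v|^{\alpha}v\bigr|\lesssim(|u|^{\alpha}+|v|^{\alpha})|u-v|$ holds for every $\alpha>0$ (it is \eqref{ID} in the paper, used with no restriction on $\alpha$); it is not the source of the hypothesis $\alpha>1$. The condition $\alpha>1$ enters because the self-map estimate in the $\widetilde{S}(\dot{H}^{s_c})$ norm has the form $\|\Phi(u)\|_{\widetilde{S}(\dot{H}^{s_c})}\leq\delta+CM^{\theta+1}\delta^{\alpha-\theta}$ --- the last factor in Lemma \ref{LG2}(iii) is $\|\langle\sqrt{\mathcal{L}_a}\rangle u\|_{S(L^2)}\lesssim M$, not another power of $\delta$ --- so closing requires $\delta^{\alpha-1-\theta}\lesssim M^{-\theta-1}$, i.e.\ $\alpha>1+\theta$ (see the footnote to \eqref{deltasd}). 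Second, in cases (ii) and (iii) the exponent $\alpha$ cannot ``fall below one'': $b<\tfrac12$ forces $\tfrac{4-2b}{3}>1$ and $b<1$ forces $3-2b>1$, which is precisely why the paper treats (ii) and (iii) as immediate corollaries of (i) rather than as ranges requiring a separate mechanism to avoid a Lipschitz bound.
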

%\begin{remark}
We remark that Theorem \ref{GWPN=3} holds for general initial data. On the other hand, Theorem \ref{GWPN=3}-(ii) shows global well posedness in the full intercritical regime, however with $b<\frac{1}{2}$.  The gap
$\frac{1}{2} \leq b < \frac{3}{2}$ is still an open problem. Moreover, in the particular case\footnote{The case $\frac{4-2b}{3}<\alpha<3-2b$ was obtained in Theorem \ref{GWP1}.}
$3-2b \leq \alpha < 4-2b$, we have a better range for $b$ than in (ii).
%\end{remark}

The next result holds for non-radial data and $a<0$, however only for dimensions $N=3,4,5$. Here, we also use the norm $\| \cdot \|_{\widetilde{S}(\dot{H}^{s_c})}$.
\begin{theorem}[Small data theory]\label{GWP2}
Let $0<b<\tfrac{6-N}{2}$ and $u_0\in H^1_a$ with $\|u_0\|_{H^1_a}\leq M$, for some $M>0$. Assume that $(N,a,\alpha)$ satisfy \begin{equation}\label{conditionINLSaG}
\begin{cases} a>-\tfrac{1}{4}\;\quad\qquad\qquad \qquad \qquad \qquad\ \textnormal{if}\;\;N=3,\;\;\;\tfrac{4-2b}{3}<\alpha\leq\; 2-2b \quad \textnormal{and} \;\; 0\leq b<\tfrac{1}{2},\\
a>-\tfrac{(N-2)^2}{4}+\left(\tfrac{\alpha(N-2)-(2-2b)}{2(\alpha+1)}\right)^2\;\;\textnormal{if}\;\;3\leq N\leq 5,\;\;\; \max\{\tfrac{4-2b}{N},\frac{2-2b}{N-2},1\}<\alpha< \tfrac{4-2b}{N-2}.
\end{cases}
\end{equation}
Then there exists $\delta_{sd}=\delta_{sd}(M)>0$ such that if $\|e^{-it\mathcal{L}_a}u_0\|_{\widetilde{S}(\dot{H}^{s_c})}<\delta_{sd}$, then there exists a unique global solution $u$ of \eqref{INLSa} such that
\begin{equation}
\|u\|_{\widetilde{S}(\dot{H}^{s_c})}\leq  2\|e^{-it\mathcal{L}_a}u_0\|_{\widetilde{S}(\dot{H}^{s_c})} \quad \textnormal{and}\quad \|u\|_{S\left(L^2\right)}+\|\sqrt{\mathcal{L}_a}  u\|_{S\left(L^2\right)}\leq 2c\|u_0\|_{H^1_a},  \end{equation}
for some universal constant $c>0$. 
\end{theorem}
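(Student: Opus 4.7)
The plan is to construct the global solution via a standard fixed-point argument applied to the Duhamel operator
\begin{equation*}
\Phi(u)(t) = e^{-it\mathcal{L}_a}u_0 + i\lambda\int_0^t e^{-i(t-s)\mathcal{L}_a}\bigl(|x|^{-b}|u|^{\alpha}u\bigr)(s)\,ds
\end{equation*}
on the complete metric space
\begin{equation*}
B = \Bigl\{u : \|u\|_{\widetilde{S}(\dot{H}^{s_c})} \leq 2\|e^{-it\mathcal{L}_a}u_0\|_{\widetilde{S}(\dot{H}^{s_c})},\ \|u\|_{S(L^2)} + \|\sqrt{\mathcal{L}_a}\,u\|_{S(L^2)} \leq 2c\|u_0\|_{H^1_a}\Bigr\}
\end{equation*}
equipped with the distance $d(u,v) = \|u-v\|_{\widetilde{S}(\dot{H}^{s_c})} + \|u-v\|_{S(L^2)}$. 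The linear contribution is controlled by the Strichartz estimates for $e^{-it\mathcal{L}_a}$ recalled in Section \ref{sec2}, which hold throughout the full $L^2$-admissible range thanks to $a>-\tfrac{(N-2)^2}{4}$.

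The heart of the argument is the pair of nonlinear estimates
\begin{equation*}
\bigl\||x|^{-b}|u|^{\alpha}u\bigr\|_{S'(L^2)} \lesssim \|u\|_{\widetilde{S}(\dot{H}^{s_c})}^{\alpha}\|u\|_{S(L^2)},\qquad \bigl\|\sqrt{\mathcal{L}_a}\bigl(|x|^{-b}|u|^{\alpha}u\bigr)\bigr\|_{S'(L^2)} \lesssim \|u\|_{\widetilde{S}(\dot{H}^{s_c})}^{\alpha}\|\sqrt{\mathcal{L}_a}\,u\|_{S(L^2)},
\end{equation*}
together with an analogous bound in $S'(\dot{H}^{-s_c})$. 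I would establish these by splitting $\mathbb{R}^N = B(0,1)\cup B(0,1)^c$, so that $|x|^{-b}$ is isolated on a bounded singular region and on an integrable tail. On each region H\"older in space and time, followed by Sobolev embedding at level $s_c$, converts the factor $|u|^{\alpha}$ into a power of $\|u\|_{\widetilde{S}(\dot{H}^{s_c})}$. The derivative estimate additionally requires the equivalence $\|\nabla f\|_{L^r} \approx \|\sqrt{\mathcal{L}_a}\,f\|_{L^r}$ at the specific Lebesgue exponent dictated by the chosen Strichartz pair: this is precisely the step that forces the lower bound on $a$ in \eqref{conditionINLSaG}, since the admissible range of $r$ shrinks as $a$ approaches $-\tfrac{(N-2)^2}{4}$ and the threshold value $-\tfrac{(N-2)^2}{4}+\bigl(\tfrac{\alpha(N-2)-(2-2b)}{2(\alpha+1)}\bigr)^2$ is exactly what is needed to cover the exponent produced by differentiating $|x|^{-b}|u|^{\alpha}u$.

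The main obstacle is closing this scheme without radial symmetry. In Theorem \ref{GWP1} the Strauss decay gives extra room near the origin, and here it must be replaced by a Hardy--Sobolev inequality applied to $|x|^{-b}$. The existence of an $\dot{H}^{s_c}$-admissible pair $(\bar a,\bar r)$ that fits into Hardy--Sobolev after absorbing $|x|^{-b}$ is what restricts $b<\tfrac{6-N}{2}$ and therefore $N\in\{3,4,5\}$; the lower bounds $\alpha>\max\{\tfrac{4-2b}{N},\tfrac{2-2b}{N-2},1\}$ enter through the same accounting of exponents, with $\alpha>1$ being needed to guarantee the $C^1$ regularity of the nonlinearity used in the $\sqrt{\mathcal{L}_a}$ estimate. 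The condition $\alpha<\tfrac{4-2b}{N-2}$ is the energy-subcriticality used to keep the time exponent finite. The careful bookkeeping of these constraints against the Schr\"odinger-admissibility conditions is the only technical point that genuinely differs from the $L^2$-based small-data theory.

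Finally, with the nonlinear estimates in hand, one checks that $\Phi$ maps $B$ into itself once $\delta_{sd}=\delta_{sd}(M)$ is chosen small enough so that the nonlinear contribution is bounded by half of the linear one; the same estimates applied to the pointwise bound $\bigl||u|^{\alpha}u-|v|^{\alpha}v\bigr|\lesssim (|u|^{\alpha}+|v|^{\alpha})|u-v|$ yield a factor of $\delta_{sd}^{\alpha}$ in front of $d(u,v)$, producing a contraction. Banach's theorem then gives a unique fixed point $u\in B$, which is the desired global solution satisfying the stated bounds; uniqueness in the full solution space is obtained by localizing the same contraction on small time intervals and iterating.
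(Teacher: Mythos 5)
Your overall architecture (contraction mapping, ball/complement splitting of the weight, H\"older plus Sobolev embedding, equivalence of Sobolev spaces forcing the lower bound on $a$) matches the paper's, but there is a genuine gap in the step where you close the estimate for the critical norm $\|\Phi(u)\|_{\widetilde{S}(\dot{H}^{s_c})}$. You propose to bound the Duhamel term via ``an analogous bound in $S'(\dot{H}^{-s_c})$'', which presupposes the inhomogeneous Strichartz estimate at the $\dot{H}^{s_c}$ level. In this paper that estimate (Lemma \ref{Strichartzpotential}, i.e.\ \eqref{SE5}) is only available for \emph{radial} data and $a\geq 0$, because it rests on the unweighted dispersive estimate; Theorem \ref{GWP2} is precisely the non-radial, possibly $a<0$ case. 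Moreover, the pair $(\bar a,\bar r)$ defining $\widetilde{S}(\dot{H}^{s_c})$ is deliberately \emph{not} $\dot{H}^{s_c}$-admissible ($\bar r$ may exceed $\tfrac{2N}{N-2}$), so even the radial estimate would not apply to this norm. The paper's resolution (see Remark \ref{important*} and the proof of Theorem \ref{GWP2}) is to route everything through $L^2$-admissible pairs: one applies the Sobolev embedding $\|\cdot\|_{L^{\bar r}_x}\lesssim\|D^{s_c}\cdot\|_{L^{\bar p}_x}$ with $(\bar a,\bar p)$ $S$-admissible, uses the $L^2$-level inhomogeneous estimate \eqref{SE3}, and then needs the additional fractional estimate on $\|\sqrt{\mathcal{L}_a}^{s_c}F(x,u)\|_{L^2_tL^{\frac{2N}{N+2}}_x}$, obtained by interpolating the $L^2$- and $\dot H^1_a$-level bounds (Lemma \ref{LG3}(iii)). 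Without this detour your scheme cannot produce the bound $\|u\|_{\widetilde{S}(\dot{H}^{s_c})}\leq 2\|e^{-it\mathcal{L}_a}u_0\|_{\widetilde{S}(\dot{H}^{s_c})}$.

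A secondary issue: your nonlinear estimates carry the full power $\|u\|^{\alpha}_{\widetilde{S}(\dot{H}^{s_c})}$, whereas the paper's Lemma \ref{LG3} splits off a small power, giving $\|u\|^{\theta}_{L^\infty_tH^1_a}\|u\|^{\alpha-\theta}_{L^{\bar a}_tL^{\bar r}_x}$. The parameter $\theta$ is not cosmetic: it is exactly what makes $\||x|^{-b}\|_{L^\gamma(A)}$ finite on both $B$ and $B^C$ (one chooses $\theta r_1=\tfrac{2N}{N-2}$ near the origin and $\theta r_1=2$ at infinity), and it is the source of the factor $M^{\theta}$ that makes $\delta_{sd}$ depend on $M$. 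With the purely scaling-critical estimate you state, the H\"older/Sobolev bookkeeping does not close, and your own remark that $\delta_{sd}=\delta_{sd}(M)$ would be unexplained. Finally, taking the contraction metric to include $\|u-v\|_{\widetilde{S}(\dot{H}^{s_c})}$ would require a difference estimate at the $D^{s_c}$ level, which the pointwise bound $\bigl||u|^{\alpha}u-|v|^{\alpha}v\bigr|\lesssim(|u|^{\alpha}+|v|^{\alpha})|u-v|$ does not provide; the paper contracts only in $\|\cdot\|_{S(L^2)}$ and recovers the remaining bounds on the fixed point from the self-mapping estimates.
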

\begin{remark}\label{importante}
The results above still hold, with the same proof, if one restricts the time interval to $[t_0,+\infty)$ or $(-\infty,t_0]$, instead of $\mathbb{R}$, where $u(t_0)=u_0$. By time-translation invariance, we assume
$t_0=0$ in Theorem \ref{GWP2}.
\end{remark}

As mentioned above the main tool to show the local and global well-posedness is the Fixed Point Theorem, which is based on the Strichartz estimates. Similarly as in the local theory the main difficulty here is to look for admissible pairs to establish the equivalence of Sobolev spaces, mainly when $a<0$. 

%In the last part of this paper, we study 
Once global results are proved, the natural route is to study the asymptotic behavior of such global solutions as $t\rightarrow \pm  \infty$. We show that our solutions scatter to a solution of the linear problem in $H^1_a(\Real^N)$. In addition, we construct the wave operator associated to eq. \eqref{INLSa}. This is the reciprocal problem of the scattering theory, which consists in constructing a solution with a
prescribed scattering state.
%\begin{proposition}\label{SCATTERSH1}{\bf ($H^1_a$-Scattering)}  Let $u(t)$ be  a global solution of \eqref{INLSa} with initial data $u_0 \in  H^1_a(\mathbb{R}^N)$ and assume the assumptions of Theorems \ref{GWP1}, \ref{GWPN=3}  and \ref{GWP2}. If 
%$$\|u\|_{S(\dot{H}^{s_c})}< +\infty\;\; (\textnormal{or}\; \|u\|_{\widetilde{S}(\dot{H}^{s_c})}< +\infty)\quad \textnormal{and}\quad  \sup\limits_{t\in \mathbb{R}}\|u(t)\|_{H^1_a}\leq M,
%$$ then $u(t)$ scatters in $H^1_a$, that is, there exists  $\phi^{\pm}\in H^1_a$ such that
%$
%\lim_{t\rightarrow \pm\infty}\|u(t)-e^{it\mathcal{L}_a}\phi^{\pm}\|_{H^1_a}=0.
%$
%\end{proposition}

%Finally, we study stability of the solutions of \eqref{INLSa}, in the mass-supercritical and energy subcritical case. To do that, we also relies on the estimates presented in Section \ref{secglo}. Here, by
%stability we mean if we have an approximate solution $\tilde{u}$ to \eqref{INLSa}, as in \eqref{appsol}, 
%with an $e$ small in a suitable norm and $\widetilde{u}_0-u_0$ small in $\dot{H}^{s_c}$, then there exists a solution $u$ to \eqref{INLSa} which stays close to $\widetilde{u}$ in critical norms. More precisely, 

\begin{theorem}\label{Wave operator} 
Assume the assumptions in Theorems \ref{GWP1}, \ref{GWPN=3} and \ref{GWP2}. 
\begin{itemize}
\item [(i)] Let $u(t)$ be  a global solution of \eqref{INLSa} with initial data $u_0 \in  H^1_a(\mathbb{R}^N)$. If $\sup\limits_{t\in \mathbb{R}}\|u(t)\|_{H^1_a}\leq M$ and 
$\|u\|_{S(\dot{H}^{s_c})}< +\infty\;\; (\textnormal{or}\; \|u\|_{\widetilde{S}(\dot{H}^{s_c})}< +\infty)$, then $u(t)$ scatters in $H^1_a$, that is, there exists  $\phi^{\pm}\in H^1_a$ such that
$$
\lim_{t\rightarrow \pm\infty}\|u(t)-e^{-it\mathcal{L}_a}\phi^{\pm}\|_{H^1_a}=0.
$$
\item [(ii)] For any $\phi \in H^1_a\left(\mathbb{R}^N\right)$, there exist $T_0>0$ and
$u\in C([T_0,\infty) : H^1_a(\mathbb{R}^N))$ solution of \eqref{INLSa} satisfying
$$
\lim_{t\rightarrow \;\infty}\|u(t)-e^{-it\mathcal{L}_a}\phi\|_{H^1_a}=0.
$$ 
The analogous statement holds backward in time.
\end{itemize}
\end{theorem}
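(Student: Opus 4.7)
The plan for part (i) is to follow the standard scattering argument built around Duhamel's formula. Writing
\begin{equation}
e^{it\mathcal{L}_a}u(t) = u_0 - i\lambda \int_0^t e^{is\mathcal{L}_a}\bigl(|x|^{-b}|u|^{\alpha}u\bigr)(s)\,ds,
\end{equation}
I would prove that the right-hand side is Cauchy in $H^1_a$ as $t\to +\infty$, and then define the scattering state
\begin{equation}
\phi^{+} := u_0 - i\lambda \int_0^{\infty} e^{is\mathcal{L}_a}\bigl(|x|^{-b}|u|^{\alpha}u\bigr)(s)\,ds \in H^1_a.
\end{equation}
The Cauchyness would be shown by estimating $\|e^{it\mathcal{L}_a}u(t)-e^{it'\mathcal{L}_a}u(t')\|_{H^1_a}$ via the dual Strichartz (and its $\sqrt{\mathcal{L}_a}$-version) on the interval $[t',t]$, applied to $|x|^{-b}|u|^{\alpha}u$. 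The assumption $\|u\|_{S(\dot{H}^{s_c})}<\infty$ (resp.\ $\|u\|_{\widetilde{S}(\dot{H}^{s_c})}<\infty$) is used in the same way as in the global theory (Theorems \ref{GWP1}, \ref{GWPN=3}, \ref{GWP2}): partition $[0,\infty)$ into finitely many subintervals $I_j$ on each of which this $\dot{H}^{s_c}$-Strichartz norm of $u$ is below a small threshold $\delta$. On each $I_j$, combining the nonlinear estimates for $|x|^{-b}|u|^{\alpha}u$ used in the small-data theory with the $H^1_a$ bound $\sup_t\|u(t)\|_{H^1_a}\le M$, one obtains a bound of the form $\delta^{\alpha}$ times a factor controlled by $M$, so the Duhamel tail over $[t',\infty)$ tends to $0$. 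The convergence $\|u(t)-e^{-it\mathcal{L}_a}\phi^{+}\|_{H^1_a}\to 0$ is then immediate from unitarity of $e^{-it\mathcal{L}_a}$ on $H^1_a$ and the definition of $\phi^{+}$; the backward case is symmetric.

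The plan for part (ii) is a standard fixed-point construction of the wave operator run in the Duhamel form anchored at $+\infty$. For fixed $\phi \in H^1_a$, set
\begin{equation}
\Phi(u)(t) := e^{-it\mathcal{L}_a}\phi + i\lambda \int_t^{\infty} e^{-i(t-s)\mathcal{L}_a}\bigl(|x|^{-b}|u|^{\alpha}u\bigr)(s)\,ds,
\end{equation}
and seek a fixed point in the complete metric space
\begin{equation}
X_{T_0,\rho} = \Bigl\{\,u\in C([T_0,\infty);H^1_a) : \|u\|_{S(\dot{H}^{s_c})} + \|u\|_{S(L^2)} + \|\sqrt{\mathcal{L}_a}u\|_{S(L^2)} \le \rho\,\Bigr\}
\end{equation}
(with $S(\dot{H}^{s_c})$ replaced by $\widetilde{S}(\dot{H}^{s_c})$ in the non-radial case), equipped with the metric induced by the $\dot{H}^{s_c}$-Strichartz norm on $[T_0,\infty)$. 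By Strichartz estimates and $H^1_a\hookrightarrow \dot{H}^{s_c}_a$, the linear piece $e^{-it\mathcal{L}_a}\phi$ is an element of the global Strichartz spaces, hence
\begin{equation}
\|e^{-it\mathcal{L}_a}\phi\|_{S(\dot{H}^{s_c})([T_0,\infty))} \longrightarrow 0 \quad \text{as } T_0\to\infty.
\end{equation}
Choosing $T_0$ large enough so that this norm is below the universal threshold from the small-data theorems, the same nonlinear estimates used in the proofs of Theorems \ref{GWP1}--\ref{GWP2} show that $\Phi$ maps $X_{T_0,\rho}$ into itself and is a contraction, producing a solution $u$ on $[T_0,\infty)$. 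The asymptotic identification $\|u(t)-e^{-it\mathcal{L}_a}\phi\|_{H^1_a}\to 0$ follows from the Duhamel representation of $u-e^{-it\mathcal{L}_a}\phi$, the unitarity of the linear group on $H^1_a$, and the same tail estimate as in part (i); the backward statement is identical after reversing time.

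The main obstacle, and the reason the proof is not completely formal, is the handling of the $\sqrt{\mathcal{L}_a}$-derivative on the inhomogeneous nonlinearity $|x|^{-b}|u|^{\alpha}u$ in $H^1_a$. One needs nonlinear estimates controlling both $\| |x|^{-b}|u|^{\alpha}u\|_{S'(L^2)}$ and $\|\sqrt{\mathcal{L}_a}(|x|^{-b}|u|^{\alpha}u)\|_{S'(L^2)}$ by $\|u\|_{S(\dot{H}^{s_c})}^{\alpha}$ times an $H^1_a$-factor, which in turn rests on the equivalence of Sobolev spaces $H^{1,r}_a \simeq H^{1,r}$ in the admissible range. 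This is precisely the technical constraint that forced the hypotheses \eqref{conditionINLSa} and \eqref{conditionINLSaG}, so invoking the estimates from Section 2 and the (global) proofs of Theorems \ref{GWP1}--\ref{GWP2} in the appropriate ranges is what makes the scheme go through.
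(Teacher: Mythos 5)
Part (i) of your proposal follows essentially the same route as the paper: partition $[0,\infty)$ into finitely many intervals on which $\|u\|_{S(\dot{H}^{s_c});I_j}<\delta$, use the nonlinear estimates of Section 4 together with $\sup_t\|u(t)\|_{H^1_a}\leq M$ to bootstrap a global bound on $\|u\|_{S(L^2)}+\|\sqrt{\mathcal{L}_a}u\|_{S(L^2)}$, and then read off the decay of the Duhamel tail. One small point worth making explicit: since the nonlinear estimates bound $\|\sqrt{\mathcal{L}_a}F(x,u)\|_{S'(L^2)}$ by a quantity involving $\|\sqrt{\mathcal{L}_a}u\|_{S(L^2)}$ (a space-time norm, not controlled by $\sup_t\|u(t)\|_{H^1_a}$ alone), the global bound $\|\langle\sqrt{\mathcal{L}_a}\rangle u\|_{S(L^2)}<\infty$ must be established \emph{first}, by absorption on each $I_j$, before the tail estimate can be run; your write-up gestures at this but should state it as a separate preliminary claim, as the paper does.

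In part (ii) there is a genuine flaw in the setup of the contraction space. You seek a fixed point of $\Phi(u)=e^{-it\mathcal{L}_a}\phi+i\lambda\int_t^{\infty}e^{-i(t-s)\mathcal{L}_a}F(x,u)\,ds$ in the set $X_{T_0,\rho}$ defined by $\|u\|_{S(\dot{H}^{s_c})}+\|u\|_{S(L^2)}+\|\sqrt{\mathcal{L}_a}u\|_{S(L^2)}\leq\rho$ with $\rho$ small. But $(\infty,2)$ is an admissible pair, so $\|\Phi(u)\|_{S(L^2;[T_0,\infty))}\geq\|e^{-it\mathcal{L}_a}\phi\|_{L^\infty_tL^2_x}-o(1)=\|\phi\|_{L^2}-o(1)$ by unitarity, and likewise for the $\dot{H}^1_a$ component; these norms do \emph{not} decay as $T_0\to\infty$. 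Hence $\Phi$ cannot map $X_{T_0,\rho}$ into itself for small $\rho$ unless $\phi$ itself is small, and the self-mapping step fails. The fix is the one the paper uses: run the fixed point for the perturbation $w=u-e^{-it\mathcal{L}_a}\phi$, i.e.\ for the operator consisting only of the integral term, imposing $\|w\|_{S(\dot{H}^{s_c};I_T)}+\|\langle\sqrt{\mathcal{L}_a}\rangle w\|_{S(L^2;I_T)}\leq\rho$; smallness is then generated by the factor $\|w+e^{-it\mathcal{L}_a}\phi\|^{\alpha-\theta}_{S(\dot{H}^{s_c};I_T)}\leq(\rho+\|e^{-it\mathcal{L}_a}\phi\|_{S(\dot{H}^{s_c};I_T)})^{\alpha-\theta}$, while the $L^2$-based norms of the full solution are merely bounded. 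Relatedly, the decay $\|e^{-it\mathcal{L}_a}\phi\|_{S(\dot{H}^{s_c};[T_0,\infty))}\to0$ that you invoke is only valid because the endpoint pair $\left(\infty,\tfrac{2N}{N-2s_c}\right)$ has been excluded from the definition of $\dot{H}^{s_c}$-admissibility (the paper flags this in a footnote); for that pair the norm does not vanish in the limit, so this exclusion is not cosmetic.
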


%\begin{remark}
%It is important to mention that, in Theorem \ref{LTP}, when we assume the hypotheses of Theorems \ref{GWPN=3} and \ref{GWP2} we replace $\|u\|_{S(\dot{H}^{s_c})}$ by $\|u\|_{\widetilde{S}(\dot{H}^{s_c})}$. \end{remark}

The rest of the paper is organized as follows. In Section \ref{sec2}, we introduce some notations and give a review of the Strichartz estimates. In Section \ref{Sec3} we discuss the existence of a ground state and establish global existence as well as blow up in $H^1_a(\mathbb{R}^N)$, for $L^2$-critical and intercritical cases. In Section \ref{Sec4} we study the local and global well-posedness applying the contraction mapping principle. Finally, in Section \ref{Sec5} we prove Theorem \ref{Wave operator}.

\ 

\section{Notation and Preliminaries}\label{sec2}

In this section, we introduce the notation used throughout the paper and list some useful results.  We use $C$ to denote various constants that may vary line by line. If $a$ and $b$ be positive real numbers, the
notation $a \lesssim b$ means that there exists a positive constant $C>0$ such that\footnote{The constant $C$ may depend on parameters, such as the dimension $N$, as well on \textit{a priori} estimates on the solution, but never on the solution itself or on time} $a \leq Cb$. The notation $a \sim b$ means $a\lesssim b$ and $b \lesssim a$. Given a real number $r$, we use $r^\pm$ to denote $r\pm\varepsilon$ for some $\varepsilon>0$ sufficiently small. For a subset $A\subset \mathbb{R}^N$, its complement is denoted by $A^C=\mathbb{R}^N \backslash A$ and the characteristic function $\chi_A$ denotes the function
that has value $1$ at points of $A$ and $0$ at points of $A^C$. Given $x,y \in \mathbb{R}^N$, $x \cdot y$ denotes the usual inner product of $x$ and $y$ in $\mathbb{R}^N$.

The norm in the Sobolev spaces $\dot{H}^{s,r}=\dot{H}^{s,r}(\mathbb{R}^N)$ and $H^{s,r}=H^{s,r}(\mathbb{R}^N)$, are defined by
\begin{equation}
\|f\|_{\dot{H}^{s,r}}:=\|D^sf\|_{L^r} \quad \textnormal{and}\quad \|f\|_{H^{s,r}}:=\| \langle D \rangle^s f \|_{L^r},
\end{equation}
where $D^s f:= \sqrt{-\Delta}^s f=(|\xi|^s\widehat{f})^{\vee}
$  and $\langle \cdot \rangle=(1+|\cdot|^2)^{\frac{1}{2}}$. 
If $r=2$ we denote $H^{s,2}$ and $\dot{H}^{s,2}$ simply by $H^s$ and  $\dot{H}^{s}$, respectively. Similarly, we define Sobolev spaces  $\dot{H}_a^{s,r}$ and $H_a^{s,r}$ associated to $\mathcal{L}_a$ by the closure of $\mathbb{C}^\infty_0(\mathbb{R}^N \backslash \{0\} )$ under the norms
\begin{equation}
\|u\|_{\dot{H}_a^{s,r}}:=\|\sqrt{\mathcal{L}_a}^s u\|_{L^r}    \quad\textnormal{and} \quad \|u\|_{H_a^{s,r}}:=\| \langle\sqrt{\mathcal{L}_a}\rangle^s u\|_{L^r}.
\end{equation}
We abbreviate $\dot{H}_a^s(\mathbb{R}^N)= \dot{H}_a^{s,2}(\mathbb{R}^N))$ and $H_a^s(\mathbb{R}^N ) = H_a^{s,2}(\mathbb{R}^N )$. Note that, by the sharp Hardy inequality, one has
\begin{equation}
\|u\|_{\dot{H}_a^{1}} \sim \|u\|_{\dot{H}^{1}} \quad \textnormal{for} \quad a>-\left(\frac{N-2}{2}\right)^2.
\end{equation}

We also define, for $1\leq p<\infty$, the weighted Sobolev space $L^p_b=L^p_b(\Real^N)= \{ f: \;\|f\|_{p,b} < +\infty\}$, where
\begin{equation}
\|f\|_{p,b} = \left[\int |x|^{-b}|f(x)|^{p}\, dx\right]^\frac{1}{p}.
\end{equation}

Let $ q,r >0$, $s\in \mathbb{R}$, and $I\subset \mathbb{R}$ an interval; the mixed norms in the spaces $L^q_{I}L^r_x$ and $L^q_{I} H^s_x$ of a function $f=f(t,x)$ are defined as
$$
\|f\|_{L^q_{I}L^r_x}=\left(\int_I\|f(t,\cdot)\|^q_{L^r_x}dt\right)^{\frac{1}{q}}
\qquad
\mbox{and}
\qquad
\|f\|_{L^q_{I}H^s_x}=\left(\int_I\|f(t,\cdot)\|^q_{H^s_x}dt\right)^{\frac{1}{q}},
$$
with the usual modifications if either $q=\infty$ or $r=\infty$. When the  space 99+-6integration is restricted to a subset $A\subset\mathbb{R}^N$ then the mixed norm will be denoted by $\|f\|_{L_I^qL^r_x(A)}$. Moreover, if $I=\mathbb{R}$ we shall use the notations $\|f\|_{L_t^qL^r_x}$ and $\|f\|_{L_t^qH^s_x}$.

Next, we recall some important inequalities. 
%\begin{lemma}[\textbf{Sobolev embedding}]\label{SI} Let $s\in (0,+\infty)$ and $1\leq p<+\infty$.
%\begin{itemize}
%\item [(i)] If $s\in (0,\frac{N}{p})$ then $H^{s,p}(\mathbb{R}^N)$ is continuously embedded in $L^r(\mathbb{R^N})$ where $s=\frac{N}{p}-\frac{N}{r}$. Moreover, 
%\begin{equation}\label{SEI} % SEI=sobol embedd ineq.
%\|f\|_{L^r}\leq c\|D^sf\|_{L^{p}}.
%\end{equation}
%\item [(ii)] If $s=\frac{N}{2}$ then $H^{s}(\mathbb{R}^N)\subset %L^r(\mathbb{R^N})$ for all $r\in[2,+\infty)$. Furthermore,
%\begin{equation}\label{SEI1} % SEI=sobol embedd ineq.
%\|f\|_{L^r}\leq c\|f\|_{H^{s}}.
%\end{equation}
%\item [(iii)] If $s>\frac{N}{2}$ then $H^{s}(\mathbb{R}^N)\subset L^\infty(\mathbb{R^N})$.
%\end{itemize}
%\end{lemma}
%\begin{proof} See Bergh-L\"ofstr\"om \cite[Theorem $6.5.1$]{BERLOF} (see also Linares-Ponce \cite[Theorem $3.3$]{FELGUS} and Demenguel-Demenguel \cite[Proposition 4.18]{DEMENGEL}). 
%\end{proof}
To state the estimates below, it is useful to
introduce the parameter
\begin{equation}\label{ro}
\rho=\frac{(N-2)-\sqrt{(N-2)^2+4a}}{2}. \end{equation}
\begin{lemma}[\bf Equivalence of Sobolev spaces] Fix $N \geq 3$, $a \geq -(\frac{N-2}{2})^2$, and $0<s<2$. If $1<p< \infty$ satisfies $\frac{s+\rho}{N}
 <\frac{1}{p}< \min\{1,
\frac{N-\rho}{N}\}$, then
\begin{equation}
\|D^s f\|_{L^p}\lesssim \| \mathcal{L}_a^{\frac{s}{2}} f\|_{L^p}\;\textnormal{for all}\;f\in \mathbb{C}^\infty_0(\mathbb{R}^N \backslash \{0\} ).    
\end{equation}
If $\max\{\frac{s}{N},\frac{\rho}{N} \}
 <\frac{1}{p}< \min\{1,
\frac{N-\rho}{N}\}$, then
\begin{equation}
\|\mathcal{L}_a^{\frac{s}{2}} f\|_{L^p}\lesssim \|  D^sf\|_{L^p}\;\textnormal{for all}\;f\in \mathbb{C}^\infty_0(\mathbb{R}^N \backslash \{0\} ).    
\end{equation}
\begin{proof}
See \cite{Killip-Miao-Visan-Zhang-Zheng}
\end{proof}
\end{lemma}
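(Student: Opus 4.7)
The plan is to follow the heat kernel / subordination approach of Killip--Miao--Visan--Zhang--Zheng, comparing the semigroups $e^{-t\mathcal{L}_a}$ and $e^{t\Delta}$. The core fact is the two-sided Gaussian bound
$$
e^{-t\mathcal{L}_a}(x,y) \sim t^{-N/2}\left(1\wedge \tfrac{\sqrt{t}}{|x|}\right)^{\!\rho}\left(1\wedge \tfrac{\sqrt{t}}{|y|}\right)^{\!\rho} e^{-c|x-y|^2/t},
$$
due to Liskevich--Semenov and Milman--Semenov, whose deviation from the free Gaussian kernel is encoded precisely by the parameter $\rho$ in \eqref{ro}. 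This is the only input that sees the potential, everything afterwards is real-variable harmonic analysis.

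With those bounds in hand, the next step is to reduce both inequalities to the $L^p$-boundedness of the single operator $T_s := (-\Delta)^{s/2}\mathcal{L}_a^{-s/2}$ (respectively its inverse). Using the subordination representation
$$
\mathcal{L}_a^{-s/2} = \frac{1}{\Gamma(s/2)} \int_0^\infty t^{s/2-1} e^{-t\mathcal{L}_a}\, dt,
$$
one commutes $(-\Delta)^{s/2}$ past the integral and splits the resulting integral kernel into a principal part, which reproduces the identity via $(-\Delta)^{s/2}(-\Delta)^{-s/2}$, and an error part carrying the perturbation weight $|x|^{-\rho}$ (or $|y|^{-\rho}$). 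Setting $g = \mathcal{L}_a^{s/2} f$ then yields $\|D^s f\|_{L^p} = \|T_s g\|_{L^p} \lesssim \|g\|_{L^p}$, giving the first inequality; the reverse bound is obtained by inverting the roles of the two operators.

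The heart of the matter, and the step I expect to be the main obstacle, is the $L^p$ boundedness of the error operator. Its integral kernel essentially behaves like $|x|^{-\rho}\,K(x,y)\,|y|^{-\rho}$ (with only one of the weights present in the mixed case), where $K$ is a Riesz-type kernel of homogeneity $-(N-s)$ or $-N$ depending on which regime of $t$ one is in. Boundedness on $L^p$ then requires the weighted Hardy/Hardy--Littlewood--Sobolev inequalities
$$
\||x|^{-\rho} h\|_{L^p} \lesssim \|h\|_{L^p}\quad \text{for suitable } h,
$$
obtained by Schur's test or a Muckenhoupt-type argument; the membership $|x|^{-\rho}\in L^{p,\infty}_{\mathrm{loc}}$ together with the dual condition at infinity pin down exactly the range $\frac{\rho}{N} < \frac{1}{p} < \frac{N-\rho}{N}$. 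The extra $\frac{s}{N}$ in the lower endpoints comes from an additional local singularity of order $s$ introduced when one applies $D^s$ (resp.\ $\mathcal{L}_a^{s/2}$) to the short-time portion of the heat integral: the remainder kernel near the origin picks up a factor like $|x|^{-s-\rho}$, which is in $L^p_{\mathrm{loc}}$ precisely under the stated strict inequality. Tracking these two independent singular contributions, and checking that the principal and error parts can be separated without losing the sharp range, is where the argument is most delicate; the rest is bookkeeping.
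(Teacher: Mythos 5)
The paper does not actually prove this lemma---the statement and its ``proof'' are quoted verbatim from Killip--Miao--Visan--Zhang--Zheng---so the only meaningful comparison is with that reference, and your sketch is a faithful outline of the argument given there: two-sided heat-kernel bounds, the subordination formula for $\mathcal{L}_a^{-s/2}$, reduction to $L^p$-boundedness of $(-\Delta)^{s/2}\mathcal{L}_a^{-s/2}$ and its inverse, and weighted Hardy-type estimates that produce exactly the stated ranges of $p$. The one slip is in the heat-kernel bound: the Liskevich--Semenov/Milman--Semenov factor is $\bigl(1\vee\tfrac{\sqrt{t}}{|x|}\bigr)^{\rho}=\bigl(\tfrac{|x|}{\sqrt{t}}\wedge 1\bigr)^{-\rho}$, singular (for $\rho>0$) precisely where $|x|\lesssim\sqrt{t}$, not $\bigl(1\wedge\tfrac{\sqrt{t}}{|x|}\bigr)^{\rho}$ as written; since you later work with the correct local weight $|x|^{-\rho}$ to pin down the ranges of $p$, this is evidently a transcription error rather than a gap.
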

\begin{remark}\label{equivalence}
Let $0 <s< 2$. It is easy to see that, if $a>0$ then $\|f\|_{H_a^{s,r}} \sim \|f\|_{H^{s,r}}$, provided that $1<r<\frac{N}{s}$. When $-\frac{(N-2)^2}{4}\leq a<0$ we have $0<\rho<\frac{(N-2)}{2}$ and so  $\|f\|_{H_a^{s,r}}  \sim \|f\|_{H^{s,r}}$ if $\frac{N}{N-\rho}<r<\frac{N}{s+\rho}$.  
\end{remark}

\begin{lemma}\label{strauss_lemma} If $f \in H^1(\mathbb{R}^N)$ is radial, $N\geq 2$, then, for any $R>0$,
\begin{equation}\label{Strauss}
	\|f\|_{L^\infty_{\left\{|x|\geq R\right\}}} \lesssim R^{-\frac{N-1}{2}}\|f\|_{L^2}^{\frac{1}{2}}\|\nabla f\|_{L^2}^{\frac{1}{2}}.
\end{equation}
\begin{proof}
See Strauss \cite{Strauss}.
\end{proof}
\end{lemma}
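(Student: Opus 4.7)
The plan is to establish this classical radial decay estimate by the standard one-dimensional argument that exploits the fact that a radial $H^1$ function must decay at infinity, combined with Cauchy-Schwarz.

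First, by density I would reduce to $f \in C_0^\infty(\mathbb{R}^N)$ radial, writing $f(x) = f(r)$ with $r = |x|$ (abusing notation for the radial profile). Since $f$ has compact support, for any $r > 0$ the fundamental theorem of calculus gives
\begin{equation}
|f(r)|^2 = -\int_r^\infty \frac{d}{ds}|f(s)|^2 \, ds = -2\Re\int_r^\infty \overline{f(s)}\, f'(s)\, ds,
\end{equation}
so $|f(r)|^2 \leq 2\int_r^\infty |f(s)||f'(s)|\, ds$.

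Next, I would insert the weight $s^{N-1}$ in order to recognize $L^2(\mathbb{R}^N)$ norms under the radial-to-one-dimensional correspondence. Since $s \geq r$ in the range of integration, $s^{-(N-1)} \leq r^{-(N-1)}$, so
\begin{equation}
|f(r)|^2 \leq \frac{2}{r^{N-1}}\int_r^\infty |f(s)|\,|f'(s)|\, s^{N-1}\, ds.
\end{equation}
Applying Cauchy--Schwarz in the $s$ variable and identifying the resulting weighted integrals with $\|f\|_{L^2(\{|x|\geq r\})}$ and $\|f'\|_{L^2(\{|x|\geq r\})}$ (up to the surface measure constant $\omega_{N-1}$), and using that $|\nabla f(x)| = |f'(|x|)|$ for radial $f$, I obtain
\begin{equation}
|f(r)|^2 \lesssim \frac{1}{r^{N-1}}\, \|f\|_{L^2(\mathbb{R}^N)}\,\|\nabla f\|_{L^2(\mathbb{R}^N)}.
\end{equation}

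Finally, taking the supremum over $r \geq R$ and extracting the square root yields the desired bound, and the general case $f \in H^1$ follows by density. There is no real obstacle here: the only subtlety is to make sure one works with the radial profile and its pointwise decay via the fundamental theorem of calculus, which requires the preliminary approximation by smooth compactly supported radial functions so that the boundary term at infinity vanishes.
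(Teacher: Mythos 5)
Your proof is correct and is precisely the classical argument of Strauss that the paper cites instead of reproducing: reduction to smooth compactly supported radial profiles, the fundamental theorem of calculus applied to $|f(r)|^2$, insertion of the weight $s^{N-1}$ using $s\geq r$, and Cauchy--Schwarz. Nothing is missing, so there is no substantive difference from the intended (cited) proof.
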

%\begin{lemma}\label{Kato}
%Let $G\in C^{k}(\mathbb{C}, \mathbb{C})$ with $k\in \mathbb{N}\setminus \{0\}$. 
%Assume that there is $\beta > 0$ such that $k \leq \beta + 1$ and $|D^iG(z)|\lesssim |z|^{\beta+1-i}$, $z\in \mathbb{C}$, $i=1,...,k$. If $s \in [0, k]$ and $1 < r, r_2 < +\infty$, $1 < r_1 \leq  +\infty$ are such that $\frac{1}{r}=\frac{\beta}{r_1}+\frac{1}{r_2}$ then
%$$
%\| D^s G(u) \|_{L^r} \leq c \|u\|^\beta_{L^{r_1}}\|D^s u\|_{L^{r_2}}.
%$$
%\end{lemma}
%\begin{proof} See Kato \cite{KATOTOSIO}.
%\end{proof}

The next lemma implies that the Strichartz estimates (Lemma \ref{Strichartzpotential}) hold.% Moreover, it plays a key role in the proof of the scattering criterion.
\begin{lemma}[\bf Dispersive estimate]\label{Dispersive equ}
Let f be a radial function.
\begin{itemize}
\item [(i)] If $a \geq 0$, then we have
\begin{equation}\label{DE1}
\|e^{it\mathcal{L}_a}f\|_{L^\infty}\lesssim |t|^{-\frac{N}{2}}\|f\|_{L^{1}}.
\end{equation}
\item [(ii)] If $-\frac{(N-2)^2}{4}< a <0$, then
\begin{equation}\label{DE2}
\|(1+|x|^{-\rho})^{-1}e^{it\mathcal{L}_a}f\|_{L^\infty}\lesssim \frac{1+|t|^\rho}{|t|^{\frac{N}{2}}}\|(1+|x|^{-\rho})f\|_{L^{1}},
\end{equation}
with $\rho$ being as in \eqref{ro}.
\end{itemize}
\begin{proof} See Zheng \cite{Zheng}.
\end{proof}
\end{lemma}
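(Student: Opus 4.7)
The plan is to exploit the fact that $\mathcal{L}_a$ commutes with rotations, so $e^{it\mathcal{L}_a}$ preserves radial symmetry. This reduces the estimate to a one-dimensional kernel estimate on the half-line $r=|x|>0$. Setting $\nu := \tfrac{1}{2}\sqrt{(N-2)^2+4a} = \tfrac{N-2}{2}-\rho$, the operator $\mathcal{L}_a$ acting on radial functions is diagonalized by the Hankel transform of order $\nu$. I would therefore derive an explicit Schwartz kernel $K_t(r,s)$ for $e^{it\mathcal{L}_a}$ between radial functions, bound it pointwise via Bessel asymptotics, and then apply the trivial $L^1\to L^\infty$ form of Schur's test to conclude both (i) and (ii).

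To obtain $K_t$, I would insert the Hankel inversion formula into the spectral representation
\[
e^{it\mathcal{L}_a}f(r) = \int_0^\infty e^{-it\lambda^2}(r\lambda)^{-(N-2)/2}J_\nu(r\lambda)\,\widehat{F}(\lambda)\,\lambda\,d\lambda
\]
and use the classical Gaussian Bessel identity
\[
\int_0^\infty e^{-\alpha\lambda^2}J_\nu(r\lambda)J_\nu(s\lambda)\,\lambda\,d\lambda = \tfrac{1}{2\alpha}e^{-(r^2+s^2)/(4\alpha)}I_\nu\!\left(\tfrac{rs}{2\alpha}\right),\qquad \operatorname{Re}\alpha>0,
\]
continued analytically to $\alpha=it$. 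This produces a formula of the shape
\[
K_t(r,s) = \frac{c\,e^{i(r^2+s^2)/(4t)}}{t}\,(rs)^{-(N-2)/2}\,\widetilde{J}_\nu\!\left(\tfrac{rs}{2t}\right),
\]
where $\widetilde{J}_\nu$ is an oscillatory rotation of $J_\nu$ of the same pointwise size, to which I apply the standard bounds $|\widetilde{J}_\nu(z)|\lesssim z^\nu$ for $z\lesssim 1$ and $|\widetilde{J}_\nu(z)|\lesssim z^{-1/2}$ for $z\gtrsim 1$.

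For (i), when $a\geq 0$ one has $\nu\geq \tfrac{N-2}{2}$, i.e., $\rho\leq 0$. In the regime $rs\leq |t|$ the small-argument Bessel bound yields $|K_t(r,s)|\lesssim (rs)^{-\rho}|t|^{-(N/2-\rho)}\lesssim |t|^{-N/2}$ (since $\rho\leq 0$ and $rs\leq|t|$); in the regime $rs>|t|$ the large-argument bound gives $|K_t(r,s)|\lesssim (rs)^{-(N-1)/2}|t|^{-1/2}\leq |t|^{-N/2}$. Hence $\sup_{r,s>0}|K_t(r,s)|\lesssim |t|^{-N/2}$ and (i) follows.

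For (ii), when $a<0$ one has $\rho>0$ and the small-argument regime produces a genuine singularity $(rs)^{-\rho}$ near the origin. The weights $(1+|x|^{-\rho})$ appearing in the statement are tailored precisely to absorb it: an elementary case check on the four regions $r\lessgtr 1$, $s\lessgtr 1$ shows that
\[
\frac{(rs)^{-\rho}}{(1+r^{-\rho})(1+s^{-\rho})}\lesssim 1.
\]
Combined with the unweighted estimate valid when $rs>|t|$, this yields
\[
\sup_{r,s>0}\frac{|K_t(r,s)|}{(1+r^{-\rho})(1+s^{-\rho})}\lesssim \max\!\left(|t|^{-(N/2-\rho)},\,|t|^{-N/2}\right)\lesssim \frac{1+|t|^\rho}{|t|^{N/2}},
\]
from which (ii) follows by factoring the weights into the $L^1$ and $L^\infty$ norms as in the weighted Schur test. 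I expect the main obstacle to be a rigorous justification of the analytic continuation of the Gaussian Bessel identity to purely imaginary $\alpha$, where the integral is only conditionally convergent and $I_\nu$ must be reinterpreted as an oscillatory Hankel-type function; this is the step where a careful contour deformation (or an alternative derivation via the Mehler-type kernel used in \cite{Zheng}) seems unavoidable.
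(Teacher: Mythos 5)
Your proposal is correct, and since the paper's ``proof'' of this lemma is nothing more than a citation to Zheng, you have in effect reconstructed the argument that the cited source carries out: reduction to the half-line via radial symmetry, diagonalization by the Hankel transform of order $\nu=\tfrac12\sqrt{(N-2)^2+4a}$, Weber's second exponential integral continued to $\alpha=it$, pointwise Bessel bounds, and the $L^1\to L^\infty$ Schur test. The exponent bookkeeping checks out: $\nu-\tfrac{N-2}{2}=-\rho$ and $1+\nu=\tfrac N2-\rho$ give $|K_t(r,s)|\lesssim (rs)^{-\rho}|t|^{-(N/2-\rho)}$ in the regime $rs\lesssim|t|$ and $|K_t(r,s)|\lesssim (rs)^{-(N-1)/2}|t|^{-1/2}\leq |t|^{-N/2}$ otherwise, and the weight absorption is immediate from $(1+r^{-\rho})(1+s^{-\rho})\geq (rs)^{-\rho}$. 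The single point at which your write-up is not yet a proof is the one you flag yourself: the continuation of the Gaussian Bessel identity to purely imaginary $\alpha$, where the integral is only conditionally convergent; this is handled in the standard way by working with $\alpha=\varepsilon+it$ and passing to the limit $\varepsilon\to 0^{+}$ (or by the contour deformation you mention), and is exactly the content supplied by the reference the paper leans on. With that step made rigorous, your argument is complete.
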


\subsection{ Strichartz-Type Estimates}\label{sec2ponto1}
Before stating the Strichartz estimates, we need the following definitions.

We say the pair $(q, r)$ is Schr\"odinger admissible (S-admissible or $L^2$-admissible for short) if it satisfies
\begin{equation}\label{PA0}%cond par adm
\frac{2}{q}=\frac{N}{2}-\frac{N}{r}
\end{equation}
where
\begin{equation}\label{CPA0}
\begin{cases}
2 \leq  r  \leq \frac{2N}{N-2}\;\;\textnormal{if}\;\;\;  N\geq 3,\\
2 \leq  r < + \infty\;  \hspace{0.2cm}\textnormal{if}\;\;\; N=1,2.
\end{cases}
\end{equation}
Also, given a real number $s>0$, the pair $(q,r)$ is called $\dot{H}^s$-admissible if\footnote{It is worth mentioning that the pair $\left(\infty,\frac{2N}{N-2s_c}\right)$
also satisfies the relation \eqref{CPA1}, however, in our work we will not make use of
this pair when we estimate the nonlinearity. See Section \ref{Sec5}.} 
\begin{equation}\label{CPA1}%cond par adm
\frac{2}{q}=\frac{N}{2}-\frac{N}{r}-s
\end{equation}
with
\begin{equation}\label{HsAdmissivel}
\begin{cases}
\frac{2N}{N-2s} \leq  r  <\frac{2N}{N-2}\;\textnormal{if}\;\; \; N\geq 3,\\
\frac{2N}{N-2s} \leq  r < + \infty\;  \hspace{0.1cm}\textnormal{if}\;\;\; N=1,2.
\end{cases}
\end{equation}
We set\footnote{The restriction for $(q, r)$ $\dot{H}^0$-admissible is given by \eqref{CPA0}.} $\mathcal{A}_s:=\{(q,r);\; (q,r)\; \textnormal{is} \;\dot{H}^s\textnormal{-admissible}\}$. Also, given $(q,r)\in \mathcal{A}_s$, by $(q',r')$ we denote its dual pair, that is, $\frac{1}{q}+\frac{1}{q'}=1$ and $\frac{1}{r}+\frac{1}{r'}=1$. We define the Strichartz norm by
$$
\|u\|_{S(\dot{H}^{s})}=\sup_{(q,r)\in \mathcal{A}_{s}}\|u\|_{L^q_tL^r_x} $$
and the dual Strichartz norm by
$$
\|u\|_{S'(\dot{H}^{-s})}=\inf_{(q,r)\in \mathcal{A}_{-s}}\|u\|_{L^{q'}_tL^{r'}_x}.
$$
If $s=0$ then $\mathcal{A}_0$ is the set of all $S$-admissible pairs. We denote $S(\dot{H}^0)$ by $S(L
^2)$. We write $S(\dot{H}^s)$ or $S'(\dot{H}^{-s})$ if the mixed norm is evaluated over $\mathbb{R}\times\mathbb{R}^N$. To indicate the restriction to a time interval $I\subset (-\infty,\infty)$ or a subset $A\subset\mathbb{R}^N$, we will use the notations $S(\dot{H}^s(A);I)$ and $S'(\dot{H}^{-s}(A);I)$.\\

Finally, we define the  norm\footnote{It was mentioned in the introduction, see \eqref{quaseadmissivel}.} $\|u\|_{\widetilde{S}(\dot{H}^{s_c})}=\|u\|_{ L^{\bar{a}}_tL^{\bar{r}}_x}$, where ($\bar{a},\bar{r})$ only satisfies $\bar{r} \geq 2$ and the relation \eqref{CPA1} but not necessarily \eqref{HsAdmissivel}. Specifically, the number $\bar{r}$ does not need to satisfy the condition $\bar{r}<\frac{2N}{N-2}$. 

\ We end this section by recalling the Strichartz estimates for the linear flow $e^{-it \mathcal{L}_a}$. They were first proved by Burq-Planchon-Stalker-Tahvildar-Zadeh\footnote{They showed Strichartz estimates for $e^{-it \mathcal{L}_a}$  except the endpoint $(q,r)=(2,\frac{2N}{N-2})$.} in \cite{Burq}. Zhang-Zheng \cite{Zhang2017} confirmed the double endpoint case.
\begin{lemma}\label{Strichartzpotential0}
If $N\geq 3$ and $a>-\frac{(N-2)^2}{4}$. Then,
\begin{eqnarray}
\| e^{-it \mathcal{L}_a}f \|_{S(L^2;I)} &\lesssim & \|f\|_{L^2},\label{SE1} \\
\| e^{-it \mathcal{L}_a}f \|_{S(\dot{H}^s;I)} &\lesssim &  \|f\|_{\dot{H}^s_a} \label{SE2}\\
\left \| \int_{t_0}^t e^{-i(t-t')\mathcal{L}_a}g(\cdot,t') dt' \right \|_{S(L^2;I) } &\lesssim& \|g\|_{S'(L^2;I)} \label{SE3}%\\
%\left \| \int_{t_0}^t e^{-i(t-t')\mathcal{L}_a}g(\cdot,t') dt' \right \|_{S(\dot{H}^{s};I)} &\lesssim &\|D^s g\|_{S'(L^2;I)}, \label{SE4}
\end{eqnarray} 
%\begin{equation}\label{SE1}
%\| e^{-it \mathcal{L}_a}f \|_{S(L^2;I)} \lesssim \|f\|_{L^2},
%\end{equation}
%\begin{equation}\label{SE2}
%\| e^{-it \mathcal{L}_a}f \|_{S(\dot{H}^s;I)} \lesssim  \|f\|_{\dot{H}^s_a}.
%\end{equation}
%\begin{equation}\label{SE3}					 
%\left \| \int_{t_0}^t e^{-i(t-t')\mathcal{L}_a}g(\cdot,t') dt' \right \|_{S(L^2;I) } \lesssim \|g\|_{S'(L^2;I)},
%\end{equation}
%\begin{equation}\label{SE4}					 
%\left \| \int_{t_0}^t e^{-i(t-t')\mathcal{L}_a}g(\cdot,t') dt' \right \|_{S(\dot{H}^{-s};I)} \lesssim \|D^s g\|_{S'(L^2;I)},
%\end{equation}
%where $I\subset\mathbb{R}$ be an interval and $t_0\in I$.
%\end{itemize}
\begin{proof}
See \cite{Burq} and \cite{Zhang}.
\end{proof}
\end{lemma}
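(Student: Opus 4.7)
The plan is to derive all three estimates from the dispersive bounds in Lemma \ref{Dispersive equ} together with the abstract framework of Keel-Tao, exploiting that $\mathcal{L}_a$ is non-negative self-adjoint (by the sharp Hardy inequality), so that $\{e^{-it\mathcal{L}_a}\}_{t\in\mathbb{R}}$ is a unitary group on $L^2$ by the spectral theorem and the energy identity $\|e^{-it\mathcal{L}_a}f\|_{L^2}=\|f\|_{L^2}$ is automatic. To address \eqref{SE1}, I would first combine this energy bound with the unweighted dispersive estimate \eqref{DE1}, valid for $a\geq 0$. Since Lemma \ref{Dispersive equ} is stated only for radial data, I would decompose a general $f$ in spherical harmonics $f=\sum_k f_k(r)Y_k(\omega)$; the action of $\mathcal{L}_a$ on each angular sector reduces to a one-dimensional radial operator $\mathcal{L}_{a+\mu_k}$ with $\mu_k\geq 0$, and the same dispersive decay holds uniformly in $k$. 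Recombining via Plancherel on the sphere extends \eqref{DE1} to arbitrary $f\in L^1\cap L^2$, whereupon the Keel-Tao theorem yields the full range of $L^2$-admissible estimates \eqref{SE1} and the retarded bound \eqref{SE3}, including the double endpoint $(2,\tfrac{2N}{N-2})$.

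The regime $-\tfrac{(N-2)^2}{4}<a<0$ is the genuinely hard one, because the dispersive estimate \eqref{DE2} now carries the singular weight $(1+|x|^{-\rho})$ and therefore does not plug directly into the Keel-Tao machinery. Here I would follow the Zhang-Zheng strategy: diagonalize each angular block by the Hankel transform, whose kernel is expressed through Bessel functions of order depending on $a+\mu_k$, and perform a frequency-localized stationary phase analysis dyadically in $|x|/|t|^{1/2}$. The weights in \eqref{DE2} reflect the near-origin behavior of the Bessel functions, and the key point is that after summing dyadic pieces with the endpoint bilinear interpolation of Keel-Tao, the weights can be absorbed by a duality argument against $L^2$ on the dual variable; this is exactly what establishes the endpoint estimate for \eqref{SE1}. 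This endpoint is the main obstacle of the entire lemma.

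To pass from \eqref{SE1} to \eqref{SE2}, given $(q,r)\in\mathcal{A}_s$ I would introduce $\tilde r$ defined by $\tfrac{1}{\tilde r}=\tfrac{1}{r}+\tfrac{s}{N}$, so that $(q,\tilde r)\in\mathcal{A}_0$ and $\dot H^{s,\tilde r}(\mathbb{R}^N)\hookrightarrow L^r(\mathbb{R}^N)$ by Sobolev embedding. Since $\mathcal{L}_a^{s/2}$ commutes with $e^{-it\mathcal{L}_a}$, the equivalence of Sobolev spaces (under the stated range of parameters) yields
\[
\|e^{-it\mathcal{L}_a}f\|_{L^q_I L^r_x}\lesssim \|D^s e^{-it\mathcal{L}_a}f\|_{L^q_I L^{\tilde r}_x}\lesssim \|\mathcal{L}_a^{s/2} e^{-it\mathcal{L}_a} f\|_{L^q_I L^{\tilde r}_x} = \|e^{-it\mathcal{L}_a}\mathcal{L}_a^{s/2} f\|_{L^q_I L^{\tilde r}_x}.
\]
Applying \eqref{SE1} to $\mathcal{L}_a^{s/2} f$ at the admissible pair $(q,\tilde r)$ bounds the last quantity by $\|\mathcal{L}_a^{s/2}f\|_{L^2}=\|f\|_{\dot H^s_a}$, and taking the supremum over $(q,r)\in\mathcal{A}_s$ delivers \eqref{SE2}.
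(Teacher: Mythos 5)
The paper offers no proof of this lemma beyond citing \cite{Burq} and \cite{Zhang}, so the only meaningful question is whether your sketch would actually work, and there is a genuine gap at its core. Your plan rests on upgrading the dispersive estimates of Lemma \ref{Dispersive equ} from radial to general data by decomposing into spherical harmonics and ``recombining via Plancherel on the sphere.'' That recombination is not valid for an $L^1\to L^\infty$ bound: orthogonality of the $Y_k$ controls $L^2$-based norms only, and to reassemble you would need either $\sum_k\|f_kY_k\|_{L^1}\lesssim\|f\|_{L^1}$ (false in general) or some summable gain in $k$, which the uniform sector-wise estimate does not provide. This is precisely why \eqref{DE1}--\eqref{DE2} are stated for radial $f$ in the paper, and why Lemma \ref{Strichartzpotential} (the non-diagonal inhomogeneous estimate, which genuinely needs the dispersive bound) is restricted to radial data, whereas the present lemma holds for arbitrary data. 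The references the paper cites do not go through a pointwise dispersive estimate at all: \cite{Burq} proves weighted $L^2$ local-smoothing/resolvent estimates for $\mathcal{L}_a$ and treats the inverse-square potential perturbatively off the free Schr\"odinger Strichartz estimates (via Duhamel and Christ--Kiselev), with the double endpoint supplied later in \cite{Zhang2017}. Your second paragraph, covering $-\tfrac{(N-2)^2}{4}<a<0$, gestures at Hankel transforms and ``absorbing the weights by duality'' but does not identify the mechanism by which the singular weight $(1+|x|^{-\rho})$ in \eqref{DE2} is removed; as written it is a statement of intent rather than an argument, and the estimate it would produce is again only for radial data.

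By contrast, your derivation of \eqref{SE2} from \eqref{SE1} -- choosing $\tfrac{1}{\tilde r}=\tfrac1r+\tfrac sN$ so that $(q,\tilde r)\in\mathcal{A}_0$, using Sobolev embedding, the commutation of $\mathcal{L}_a^{s/2}$ with the flow, and the equivalence of Sobolev spaces -- is the standard and correct reduction. The one point to check there is that $\tilde r$ lies in the window $\tfrac{s+\rho}{N}<\tfrac{1}{\tilde r}<\min\{1,\tfrac{N-\rho}{N}\}$ where $\|D^s\cdot\|_{L^{\tilde r}}\lesssim\|\mathcal{L}_a^{s/2}\cdot\|_{L^{\tilde r}}$ holds; for $a<0$ (so $\rho>0$) this restricts the set of $\dot H^s$-admissible pairs for which the argument applies, a caveat worth recording since the lemma takes a supremum over all of $\mathcal{A}_s$.
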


\begin{lemma}\label{Strichartzpotential}
If $N\geq 3$, $a\geq 0$ and $g$ a radial function. Then,
\begin{equation}\label{SE5}
\left \| \int_{t_0}^t e^{-i(t-t')\mathcal{L}_a}g(\cdot,t') dt' \right \|_{S(\dot{H}^s;I) } \lesssim \|g\|_{S'(\dot{H}^{-s};I)},
\end{equation}
where $I\subset\mathbb{R}$ be an interval and $t_0\in I$.
\begin{proof}
The estimate is readily obtained from the main result in Foschi \cite{Foschi}, given the $L^1 \rightarrow L^{\infty}$ estimate in Lemma \ref{Dispersive equ}, and the invariance of the $L^2$ norm by $e^{-it\mathcal{L}_a}$.
\end{proof}
\end{lemma}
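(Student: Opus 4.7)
The plan is to invoke the abstract Strichartz machinery developed by Foschi in \cite{Foschi}, which produces the full family of inhomogeneous estimates (including the ``non-dual'' ones, where the spatial exponent of the source and that of the output are not H\"older-conjugate) from only two ingredients: unitarity of the propagator on $L^2$ and an $L^1\to L^\infty$ dispersive bound with the free Schr\"odinger decay rate $|t|^{-N/2}$.

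First I would verify the $L^2$-isometry: since $a>-\frac{(N-2)^2}{4}$, the sharp Hardy inequality makes $\mathcal{L}_a$ a nonnegative self-adjoint operator on $L^2(\mathbb{R}^N)$, and so $e^{-it\mathcal{L}_a}$ is a unitary one-parameter group. Next I would invoke Lemma~\ref{Dispersive equ}(i), which asserts that for $a\geq 0$ and $f$ radial,
$$
\|e^{-it\mathcal{L}_a} f\|_{L^\infty}\lesssim |t|^{-N/2}\|f\|_{L^1}.
$$
Because $\mathcal{L}_a$ commutes with the orthogonal group of $\mathbb{R}^N$, the flow $e^{-it\mathcal{L}_a}$ preserves radiality; hence, when $g(t',\cdot)$ is radial, the integrand of \eqref{SE5} is radial at each fixed $t'$ and the dispersive bound can be applied freely. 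Together with the unitarity, this gives precisely the two hypotheses of Foschi's abstract theorem restricted to the radial subclass.

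Foschi's result then yields, for every $(q,r)\in\mathcal{A}_s$ and $(\tilde q,\tilde r)\in\mathcal{A}_{-s}$, the bound
$$
\left\|\int_{t_0}^t e^{-i(t-t')\mathcal{L}_a}g(\cdot,t')\,dt'\right\|_{L^q_I L^r_x}\lesssim \|g\|_{L^{\tilde q'}_I L^{\tilde r'}_x},
$$
acting within radial functions. Taking the supremum in $(q,r)\in\mathcal{A}_s$ and the infimum in $(\tilde q,\tilde r)\in\mathcal{A}_{-s}$ produces \eqref{SE5}. The restriction to an interval $I\subset\mathbb{R}$ with arbitrary base point $t_0\in I$ is obtained in the standard way by a Christ--Kiselev type argument (which is already part of Foschi's scheme) once the untruncated bilinear estimate is in place.

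The main non-routine point will be the bookkeeping needed to confirm that the ``acceptability'' and scaling conditions in Foschi's theorem exactly cover the ranges of $\dot H^s$- and $\dot H^{-s}$-admissibility declared in \eqref{CPA1}--\eqref{HsAdmissivel}; this amounts to unwinding the definitions and checking that the excluded endpoints (in particular the double endpoint of energy-critical type) are not used. Once this matching is confirmed, no additional obstruction arises, because the radial class is an invariant subspace for $e^{-it\mathcal{L}_a}$ and the two inputs (unitarity and dispersion) are available there without further restriction on $g$.
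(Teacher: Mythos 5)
Your proposal is correct and follows essentially the same route as the paper: the authors likewise derive \eqref{SE5} from Foschi's inhomogeneous Strichartz framework, feeding in the unitarity of $e^{-it\mathcal{L}_a}$ on $L^2$ and the radial $L^1\to L^\infty$ dispersive bound of Lemma \ref{Dispersive equ}(i). Your additional remarks (preservation of radiality by the flow, and the bookkeeping matching admissibility to Foschi's acceptability conditions) simply make explicit what the paper leaves implicit.
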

\begin{remark}
As usual, if $I=(T,+\infty)$ then in Lemma \ref{Strichartzpotential} one may replace the integral $\int_{t_0}^t$ by $\int_t^{+\infty}$. A similar statement holds if $I=(-\infty, T)$.
\end{remark}
\begin{remark}\label{equivalencesobolevspaces}
In the case, when $s=0$, we have the norms $\|u\|_{S(L^2)}=\sup_{(q,r)\in \mathcal{A}_{0}}\|u\|_{L^q_tL^r_x}$ and $\|u\|_{S'(L^2)}=\inf_{(q,r)\in \mathcal{A}_{0}}\|u\|_{L^{q'}_tL^{r'}_x}$.
\end{remark}

\ 

\section{Global well-posedness and blow-up in $H^1_a$}\label{Sec3}

In this section, we prove results about the ground state $Q$, together with a dichotomy between global existence and finite-time blow-up below a mass-energy threshold. We first show the existence of a ground state (Proposition \ref{exist_ground}).

\subsection{Existence of a ground state}

We start by proving a compact embedding result.  
\begin{lemma}[Compactness of an immersion]\label{compacidade}
If $N \geq 3$, $a > -\tfrac{(N-2)^2}{4}$, $0 < b < 2$ and $0 < \alpha < \tfrac{4-2b}{N-2}$, then $H^1_a(\mathbb{R}^N)$ is compactly embedded in $L^{\alpha+2}_b(\mathbb{R}^N)$.
\end{lemma}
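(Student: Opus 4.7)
The plan is to take a bounded sequence $(u_n)$ in $H^1_a$, which by the norm equivalence recorded in Remark~\ref{equivalence} (valid since $a>-\tfrac{(N-2)^2}{4}$) is also bounded in $H^1$, and show that up to a subsequence it converges in $L^{\alpha+2}_b$. By weak compactness I would pass to a subsequence so that $u_n\rightharpoonup u$ in $H^1$ and try to establish $\|u_n-u\|_{\alpha+2,b}\to 0$. I would decompose $\mathbb{R}^N$ into the interior ball $B_\varepsilon=\{|x|<\varepsilon\}$, the annulus $A_{\varepsilon,R}=\{\varepsilon\le|x|\le R\}$, and the exterior $E_R=\{|x|>R\}$, and control $\int |x|^{-b}|u_n-u|^{\alpha+2}\,dx$ on each piece separately.

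On $A_{\varepsilon,R}$ the weight satisfies $|x|^{-b}\le\varepsilon^{-b}$ and, since the hypothesis $\alpha<\tfrac{4-2b}{N-2}$ gives
$$\alpha+2<\tfrac{2(N-b)}{N-2}<\tfrac{2N}{N-2},$$
the classical Rellich--Kondrachov theorem on the bounded set $A_{\varepsilon,R}$ yields $u_n\to u$ strongly in $L^{\alpha+2}(A_{\varepsilon,R})$, so this contribution tends to $0$ as $n\to\infty$ for each fixed $\varepsilon, R$. On $E_R$ the weight is small, $|x|^{-b}\le R^{-b}$, while $\|u_n-u\|_{L^{\alpha+2}(E_R)}$ stays uniformly bounded by Sobolev embedding, so this tail is $O(R^{-b})$ uniformly in $n$ and vanishes as $R\to\infty$.

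The delicate region is $B_\varepsilon$, where the weight is singular and no direct compactness is available. I would use H\"older with a pair $(q,q')$ to write
$$\int_{B_\varepsilon}|x|^{-b}|u_n-u|^{\alpha+2}\,dx\ \le\ \Bigl(\int_{B_\varepsilon}|x|^{-bq'}\,dx\Bigr)^{1/q'}\|u_n-u\|_{L^{q(\alpha+2)}}^{\alpha+2},$$
and then pick $q$ in the interval $\bigl(\tfrac{N}{N-b},\,\tfrac{2N}{(N-2)(\alpha+2)}\bigr)$. The lower bound $q'<N/b$ guarantees $\int_{B_\varepsilon}|x|^{-bq'}\,dx = C\varepsilon^{N-bq'}\to 0$ as $\varepsilon\to 0$, and the upper bound $q(\alpha+2)<\tfrac{2N}{N-2}$ makes $H^1\hookrightarrow L^{q(\alpha+2)}$ available, giving a uniform bound on $\|u_n-u\|_{L^{q(\alpha+2)}}$. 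This interval is nonempty precisely when
$$\tfrac{N}{N-b}<\tfrac{2N}{(N-2)(\alpha+2)}\quad\Longleftrightarrow\quad \alpha+2<\tfrac{2(N-b)}{N-2},$$
i.e.\ exactly under the standing hypothesis. Combining the three estimates and choosing first $R$ large, then $\varepsilon$ small, and finally $n$ large gives the claimed strong convergence.

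The main obstacle is the interior region $B_\varepsilon$: one must simultaneously accommodate the singularity of $|x|^{-b}$ near the origin and fit into the subcritical Sobolev range, and the combinatorial tension between these two requirements is resolved precisely by the threshold $\alpha<\tfrac{4-2b}{N-2}$, which is therefore consumed sharply in this step.
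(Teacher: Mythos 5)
Your proof is correct, and it follows the same overall strategy as the paper (weak $H^1$ limit via Hardy's inequality, then a spatial decomposition combining H\"older for the weight, Rellich--Kondrachov for compactness, and Sobolev embedding for uniform bounds), but the decomposition itself is genuinely different. The paper splits $\mathbb{R}^N$ into only two regions, $\{|x|\le R\}$ and $\{|x|>R\}$: on the whole ball it puts $|x|^{-b}$ in $L^{N/(b+\epsilon)}$ (merely finite, not small) and kills that term by \emph{strong convergence} of $f_n\to f$ in $L^{N(\alpha+2)/(N-b-\epsilon)}$ of the ball, which is where the hypothesis $\alpha<\tfrac{4-2b}{N-2}$ enters, via $\tfrac{N(\alpha+2)}{N-b-\epsilon}<\tfrac{2N}{N-2}$ for small $\epsilon$. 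You instead use three regions and, near the origin, reverse the roles: the function factor is only \emph{bounded} (by $H^1\hookrightarrow L^{q(\alpha+2)}$) while the smallness comes from the weight, $\int_{B_\varepsilon}|x|^{-bq'}\,dx=C\varepsilon^{N-bq'}\to 0$; your nonemptiness condition $\tfrac{N}{N-b}<\tfrac{2N}{(N-2)(\alpha+2)}$ is exactly the same constraint $\alpha<\tfrac{4-2b}{N-2}$ in disguise, so the hypothesis is consumed identically. Your version is slightly longer but arguably cleaner: it applies Rellich--Kondrachov only at the exponent $\alpha+2$ on an annulus where the weight is harmless, and it makes explicit the order of limits ($R$ large, then $\varepsilon$ small, then $n$ large), whereas the paper's two-region argument needs the strong convergence at the auxiliary exponent $\tfrac{N(\alpha+2)}{N-b-\epsilon}$ on the ball and leaves the domain of that norm implicit. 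Both are valid; neither buys extra generality over the other.
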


\begin{proof}
Let $\{f_n\}_n$ be a bounded sequence in $H^1_a$. Since $a>-\frac{(N-2)^2}{4}$, Hardy's inequality yields that $\{f_n\}_n$ is also bounded in the standard space $H^1$, so we may assume that $f_n \rightharpoonup f \in H^1$ weakly in $H^1$. Now, for $R, \epsilon > 0$,
\begin{equation}
\int |x|^{-b}|f_n-f|^{\alpha+2}dx \lesssim \left(\int_{|x|\leq R}|x|^{-\frac{bN}{b+\epsilon}}dx \right)^{\frac{b+\epsilon}{N}}\|f_n-f\|_{L^\frac{N(\alpha+2)}{N-b-\epsilon}}^{\alpha+2}+\frac{1}{R^b}(\|f_n\|_{L^{\alpha+2}} + \|f\|_{L^{\alpha+2}})^{\alpha+2}.
\end{equation}

Thus, by Sobolev and Rellich-Kondrachov, since
\begin{equation}
    2 < \alpha + 2 < \frac{N(\alpha+2)}{N-b-\epsilon} < \frac{2N}{N-2},
\end{equation}
if $0 < \alpha < \tfrac{4-2b}{N-2}$ and $\epsilon$ is small, the result follows.\end{proof}

\begin{lemma}[Adapted Gagliardo-Nirenberg inequality]\label{lem_gn} Let $N\geq 3$. If $f \in H^1_a$, $0<b<2$ and $0 < \alpha < \frac{4-2b}{N-2}$, then
\begin{equation}
\int |x|^{-b}|f|^{\alpha+2}\,dx \leq C_a \|\sqrt{\mathcal{L}_a} f\|_{L^2}^{\tfrac{N\alpha+2b}{2}} \| f\|_{L^2}^{\tfrac{4-2b-\alpha(N-2)}{2}}.
\label{gagliardo}
\end{equation}
Equality in the bound above is attained by a function $Q \in H^1_a$, which  is a positive solution to the elliptic equation

\begin{equation}\label{eq_ellip_ground}
    \mathcal{L}_a Q - Q + |x|^{-b}|Q|^{\alpha}Q = 0.
\end{equation}
\end{lemma}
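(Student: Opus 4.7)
The plan is to set up the usual Weinstein-type variational problem whose minimizer produces both the sharp constant in \eqref{gagliardo} and a ground state for \eqref{eq_ellip_ground}. Define the functional
$$
J(f)=\frac{\|\sqrt{\mathcal{L}_a}f\|_{L^2}^{\frac{N\alpha+2b}{2}}\|f\|_{L^2}^{\frac{4-2b-\alpha(N-2)}{2}}}{\int |x|^{-b}|f|^{\alpha+2}\,dx}
$$
on $H^1_a\setminus\{0\}$. A direct check on the scalings confirms that $J$ is invariant under the two-parameter rescaling $f(x)\mapsto \mu f(\lambda x)$, so given any minimizing sequence $\{f_n\}$ one may reduce, by choosing $\mu_n,\lambda_n$ appropriately, to $\|f_n\|_{L^2}=\|\sqrt{\mathcal{L}_a}f_n\|_{L^2}=1$. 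In particular the sequence is bounded in $H^1_a$ and, along a subsequence, converges weakly to some $f\in H^1_a$.

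The key step is to rule out $f\equiv 0$. I would use Lemma \ref{compacidade}: under the stated assumptions on $b$ and $\alpha$, the embedding $H^1_a\hookrightarrow L^{\alpha+2}_b$ is compact, so $\int|x|^{-b}|f_n|^{\alpha+2}\to \int|x|^{-b}|f|^{\alpha+2}$. Since $J(f_n)\to \inf J$, which is strictly positive (evaluate $J$ at any fixed nonzero test function), and the numerator of $J(f_n)$ equals $1$, the limit of the denominators is positive, hence $f\not\equiv 0$. Weak lower semicontinuity of the $L^2$ and $\dot{H}^1_a$ norms then gives $J(f)\leq\liminf J(f_n)=\inf J$, so $f$ is a minimizer, yielding \eqref{gagliardo} with $C_a=(\inf J)^{-1}$.

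To obtain \eqref{eq_ellip_ground}, I would differentiate $J(f+\varepsilon\varphi)$ at $\varepsilon=0$ against $\varphi\in C^\infty_0(\mathbb{R}^N\setminus\{0\})$; this produces a weak equation of the form $A\,\mathcal{L}_a f + B\,f - C\,|x|^{-b}|f|^\alpha f=0$ with positive constants $A,B,C$ determined by the three norms of $f$. Rescaling $Q(x)=\mu f(\lambda x)$ with $\mu,\lambda>0$ chosen to normalize two of the three coefficients delivers the stated elliptic equation. For positivity I would replace $f$ by $|f|$: Kato's inequality $|\nabla|f||\leq|\nabla f|$ combined with the fact that the inverse-square term $a\int|x|^{-2}|f|^2$ depends only on $|f|$ shows $\|\sqrt{\mathcal{L}_a}|f|\|_{L^2}\leq\|\sqrt{\mathcal{L}_a}f\|_{L^2}$ regardless of the sign of $a$, so $|f|$ is also a minimizer. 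Standard elliptic regularity applied to \eqref{eq_ellip_ground} and the strong maximum principle on $\mathbb{R}^N\setminus\{0\}$, where $\mathcal{L}_a$ has bounded coefficients, then upgrade nonnegativity to strict positivity.

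The main obstacle is the non-vanishing of the weak limit, since the functional is scale-invariant and minimizing sequences for such problems typically escape to infinity or concentrate at a point, forcing a concentration-compactness analysis. That difficulty is bypassed here by Lemma \ref{compacidade}: the weight $|x|^{-b}$ breaks translation invariance and pins the $L^{\alpha+2}_b$ mass near the origin, so strong convergence of the denominator is automatic and no further profile decomposition is required.
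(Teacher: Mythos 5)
Your proposal is correct and follows essentially the same route as the paper: minimizing the Weinstein functional, normalizing both norms to $1$ by the two-parameter rescaling, using the compact embedding $H^1_a\hookrightarrow L^{\alpha+2}_b$ (Lemma \ref{compacidade}) to force strong convergence of the denominator and hence non-vanishing of the weak limit, and then deriving the Euler--Lagrange equation and rescaling to obtain \eqref{eq_ellip_ground}. The only additions beyond the paper's argument are routine details (weak lower semicontinuity, the maximum-principle upgrade from nonnegativity to strict positivity) that the paper leaves implicit.
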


\begin{proof}
We mimic the classic proof for $a=b=0$, and exploit the compactness given by the immersion $H^1_a \hookrightarrow L^{\alpha+2}_b$. Let $\{f_n\}_n \subset H^1_a$ be a minimizing sequence for the Weinstein functional
\begin{equation}
J_a(f) = \frac{ \|\displaystyle\sqrt{\mathcal{L}_a} f\|_{L^2}^{\tfrac{N\alpha+2b}{2}} \| f\|_{L^2}^{\tfrac{4-2b-\alpha(N-2)}{2}}}{\displaystyle\int |x|^{-b}|f|^{\alpha+2}\,dx}.
\end{equation}

By choosing $\lambda_n >0$ and $\mu_n >0$ such that $g_n(x) = \lambda_n f_n( \mu_n x)$ satisfy $\|\sqrt{\mathcal{L}_a}  g_n\|_{L^2} = \| g_n\|_{L^2}=1$, and noting that $J_a(f_n) = J_a(g_n)$ for all $n$, we may assume, by compactness, that $\{g_n\}_n$ converges strongly to $g$ in $L^{\alpha+2}_b$ (see Lemma \ref{compacidade}) and by reflexiveness, weakly in $H^1_a$. Moreover, since $|\nabla|f|| \leq |\nabla f|$ for all $f \in H^1_a$, we can assume that $g \geq 0$. Furthermore, by Hölder, Hardy and Sobolev, we see that $J^*_a = \displaystyle\inf_{f \in H^1_a\backslash \{0\}} J(f) > 0$, and since $\{g_n\}_n$ is a minimizing sequence,
\begin{equation}
    \int |x|^{-b}|g(x)|^{\alpha+2} \, dx = \frac{1}{J^*_a} \in (0,+\infty).
\end{equation}
This shows that $g \neq 0$. Now note that $\|\sqrt{\mathcal{L}_a}  g\|_L^2 = \| g\|_L^2 =1$, since otherwise it would contradict the minimality of $J_a(g) = J^*_a$. Defining $C_a := (J_a^*)^{-1}$, one sees that \eqref{gagliardo} holds.

The Euler-Lagrange equation for $g$ gives
\begin{equation}
J_a^*\tfrac{N\alpha+2b}{2} \mathcal{L}_a g + J_a^*[\tfrac{4-2b-\alpha(N-2)}{2}] g - (\alpha+2)g^{\alpha+1} = 0.
\end{equation}

Finally, defining $g(x) = \lambda Q(\mu x)$, with 
\begin{equation}
\lambda = \left\{ \tfrac{[4-2b-\alpha(N-2)]J^*_a}{2(\alpha+2)}\right\}^{\frac{1}{\alpha}}, 
\end{equation}
and
\begin{equation}
\mu = \tfrac{4-2b-\alpha(N-2)}{N\alpha+2b}, 
\end{equation}
one sees that $Q$ solves \eqref{eq_ellip_ground}.
\end{proof}

In the following lemma we obtain Pohozaev-type identities which are satisfied by any solution of \eqref{eq_ellip_ground}. The proof follows 
 multiplying \eqref{eq_ellip_ground} by Q and $x \cdot \nabla Q$ and using integration by parts. We omit the details. %, we get the following identities.
\begin{lemma}[Pohozaev identities]\label{lem_poho}
If $Q \in H^1_a$ is a solution to \eqref{eq_ellip_ground}, then the following identities hold

\begin{align}
&\int \left|\sqrt{\mathcal{L}_a} Q\right|^{2}\,dx= \left(\tfrac{N\alpha+2b}{4-2b-\alpha(N-2)}\right)\|Q\|_{L^2}^2,
\label{pohozaev}\\
&\int |x|^{-b}|Q|^{\alpha+2}\,dx = \left(\tfrac{2(\alpha + 2)}{4-2b-\alpha(N-2)}\right) \|Q\|_{L^2}^2.
\label{pohozaev2}
\end{align}
In particular, one can write the mass (and therefore the quantities in \eqref{pohozaev} and \eqref{pohozaev2}) in terms of the sharp constant $C_a$. Namely,
\begin{equation}
M[Q] = \left\{\tfrac{2(\alpha+2)}{N\alpha+2b} \tfrac{[4-2b-\alpha(N-2)]^{\tfrac{N\alpha-(4-2b)}{4}}}{C_a}\right\}^\frac{1}{\alpha+2}.
\end{equation}
\end{lemma}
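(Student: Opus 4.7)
The plan is to test the elliptic equation against two well-chosen multipliers --- $Q$ itself and the dilation generator $x\cdot\nabla Q$ --- in order to derive two linear relations among the three quantities $\|\sqrt{\mathcal{L}_a}Q\|_{L^2}^2$, $\|Q\|_{L^2}^2$ and $\int|x|^{-b}|Q|^{\alpha+2}\,dx$. Solving the resulting $2\times 2$ linear system then produces \eqref{pohozaev} and \eqref{pohozaev2} simultaneously.

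First I would pair \eqref{eq_ellip_ground} with $Q$ in $L^2$, using the self-adjointness $\int Q\,\mathcal{L}_a Q\,dx = \|\sqrt{\mathcal{L}_a}Q\|_{L^2}^2$, to produce a Nehari-type identity linking the three quantities. Next I would pair \eqref{eq_ellip_ground} with $x\cdot\nabla Q$; this reduces to three building blocks. The pure Laplacian piece is the classical Pohozaev contribution $\int(-\Delta Q)(x\cdot\nabla Q)\,dx = -\tfrac{N-2}{2}\|\nabla Q\|_{L^2}^2$. The inverse-square piece follows by rewriting $2Q(x\cdot\nabla Q)=x\cdot\nabla(Q^2)$ and using $\nabla\cdot(|x|^{-2}x)=(N-2)|x|^{-2}$; combined with the Laplacian piece it repackages cleanly into $-\tfrac{N-2}{2}\|\sqrt{\mathcal{L}_a}Q\|_{L^2}^2$. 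The nonlinear piece is treated analogously: $(\alpha+2)|x|^{-b}|Q|^{\alpha}Q\,(x\cdot\nabla Q)=|x|^{-b}\,x\cdot\nabla(|Q|^{\alpha+2})$ together with $\nabla\cdot(|x|^{-b}x)=(N-b)|x|^{-b}$ yields $-\tfrac{N-b}{\alpha+2}\int|x|^{-b}|Q|^{\alpha+2}\,dx$. Combining the three blocks gives the second linear relation, and solving the $2\times 2$ system delivers \eqref{pohozaev} and \eqref{pohozaev2}. For the mass formula, I would insert these two identities into the sharpness relation $J_a(Q)=1/C_a$ from Lemma \ref{lem_gn}; since both $\|\sqrt{\mathcal{L}_a}Q\|_{L^2}^2$ and $\int|x|^{-b}|Q|^{\alpha+2}\,dx$ become explicit multiples of $\|Q\|_{L^2}^2$, the identity collapses to a single scalar equation for $\|Q\|_{L^2}^{\alpha}$ (using that $\tfrac{N\alpha+2b}{2}+\tfrac{4-2b-\alpha(N-2)}{2}-2=\alpha$), and isolating $\|Q\|_{L^2}^2$ gives the claimed closed form.

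The main obstacle is the rigorous justification of these integrations by parts. The weights $|x|^{-2}$ and $|x|^{-b}$ are singular at the origin, the multiplier $x\cdot\nabla Q$ is unbounded at infinity, and a priori $Q\in H^1_a(\mathbb{R}^N)$ alone does not control the various boundary terms. The standard remedy is a double cutoff that simultaneously removes a neighborhood of the origin and restricts to a large ball: one derives the identities for the truncated expressions and then passes to the limits using decay and regularity of $Q$. The inputs needed --- that solutions of \eqref{eq_ellip_ground} lie in $C^2_{\mathrm{loc}}(\mathbb{R}^N\setminus\{0\})$, decay exponentially as $|x|\to\infty$, and are controlled by $|x|^{-\rho}$ near the origin with $\rho$ as in \eqref{ro} --- are classical facts for Schr\"odinger operators with inverse-square potential and are enough to kill all cutoff error terms.
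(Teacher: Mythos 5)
Your proposal is correct and is precisely the argument the paper intends: the paper's entire proof is the single remark that the identities follow by multiplying \eqref{eq_ellip_ground} by $Q$ and by $x\cdot\nabla Q$ and integrating by parts, with all details omitted, so your computation of the three building blocks, the resulting $2\times 2$ linear system, and the substitution into $J_a(Q)=1/C_a$ supply exactly what is missing (including the cutoff justification of the integrations by parts, which the paper does not address). One caveat not of your making: the signs in \eqref{pohozaev}--\eqref{pohozaev2} are consistent with the ground-state equation $\mathcal{L}_a Q + Q - |x|^{-b}|Q|^{\alpha}Q=0$ of Proposition \ref{exist_ground} rather than with \eqref{eq_ellip_ground} as literally printed, which is a sign typo in the paper that your sign-agnostic write-up silently inherits.
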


With the previous results we show Proposition \ref{exist_ground}.
\begin{proof}[\bf{Proof of Proposition \ref{exist_ground}}]
The existence part of Proposition \ref{exist_ground} follows from Lemma \ref{lem_gn}, and the uniqueness of the mass, $\dot{H}^1_a$ and energy follows from Lemma \ref{lem_poho}.
\end{proof}

\subsection{Global behavior in the mass-critical case}

We now study the global
existence and blow-up in $H^1_a$ of \eqref{INLSa}, when $\alpha=\tfrac{4-2b}{N}$ and  $\lambda=1$. We start by proving the global well-posedness. 
\subsubsection{Global well-posedness}% To this end, we use Lemma \ref{lem_gn}.
\begin{proof}[\bf Proof of Theorem \ref{GBCrit}a)]
We make use here of the sharp Gagliardo-Nirenberg-type inequality (Lemma \ref{lem_gn}) in the case $\alpha = \tfrac{4-2b}{N}$:

\begin{equation}
    \int |x|^{-b}|f|^{\frac{4-2b}{N}+2}\,dx \leq \tfrac{2-2b+N}{N} \|\sqrt{\mathcal{L}_a} f\|_{L^2}^{2} \left(\frac{\| f\|_{L^2}}{\| Q\|_{L^2}}\right)^{\tfrac{4-2b}{N}}.
\end{equation}

By energy conservation:
\begin{align}
E[u_0] &= \frac{1}{2}\|\sqrt{\mathcal{L}_a}u(t)\|_{L^2}^2 - \frac{N}{4-2b+2N}\int |x|^{-b}|u(t)|^{\frac{4-2b}{N}+2}\,dx \\
&\geq \frac{1}{2}\|\sqrt{\mathcal{L}_a}u(t)\|_{L^2}^2 \left[1-\left(\frac{\|u_0\|_{L^2}}{\| Q\|_{L^2}}\right)^{\tfrac{4-2b}{N}}\right].
\end{align}
\ 
Therefore, if $\|u_0\|_{L^2} < \|Q\|_{L^2}$, the corresponding solution $u$ extends globally in time.
\end{proof}
\subsubsection{Blow-up} The main tool to prove the blow-up results is the Virial identity.
\begin{lemma}[Virial identity]\label{Morawetz}
Let $\varphi: \mathbb{R}^N \rightarrow \mathbb{R}$ be a real weight. Define
\begin{equation}
V(t) = \int \varphi |u(t)|^2dx.
\end{equation}
%If, in addition, $w$ is bounded, define also
%\begin{equation}
%    V(t) = \int w |u|^2 \, dx
%\end{equation}
Then, if $u$ is a solution to \eqref{INLS}, we have the following identities  %, whenever the left-hand side is defined
%\begin{equation}
%    V'(t) = Z(t)
%\end{equation}
%and
$$
V'(t) = 2 \Im\int \bar{u} \nabla u \cdot \nabla \varphi \, dx,
$$

\begin{align}
V''(t) &= \left(\frac{4}{\alpha+2}-2\right)\int|x|^{-b}|u|^{\alpha+2} \Delta \varphi  dx-\frac{4b}{\alpha+2}\int|x|^{-b-2}|u|^{\alpha+2}x\cdot\nabla \varphi dx\label{main_1}\\
&\quad + 4\Re\sum_{i,j}\int \varphi_{ij}\bar{u}_i u_j dx+4a\int\frac{ x\cdot\nabla \varphi}{|x|^4}|u|^2 dx\label{main_2}\\
&\quad -\int|u|^2\Delta\Delta \varphi dx \label{err_mass}. 
\end{align}
\begin{proof}
The proof is considered standard, see for instance \cite{paper3,Murphy}.
\end{proof}
\end{lemma}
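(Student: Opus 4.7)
The plan is to derive both identities by direct time differentiation of $V(t)$, using the equation $i\partial_t u = \mathcal{L}_a u - \lambda|x|^{-b}|u|^\alpha u$ to eliminate time derivatives and then integrating by parts in space. The computations are formal for general $H^1_a$ data but can be justified by density of $C_c^\infty(\mathbb{R}^N\setminus\{0\})$ in $H^1_a$ together with an approximation argument; so throughout the argument I treat $u$ as if it were smooth and decaying.

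For the first identity, I differentiate under the integral and write $\partial_t|u|^2 = 2\Re(\bar u\,\partial_t u) = 2\Im(\bar u\,\mathcal{L}_a u)$, where the nonlinear piece drops out because $\lambda|x|^{-b}|u|^{\alpha+2}$ is real and so contributes zero real part when multiplied by $i$. Since the potential term $(a/|x|^2)|u|^2$ is also real, one has $\Im(\bar u\,\mathcal{L}_a u) = -\Im(\bar u\,\Delta u)$, and a single integration by parts against $\varphi$ yields the stated formula $V'(t) = 2\Im\int \bar u\,\nabla u\cdot\nabla\varphi\,dx$.

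For the second identity, I differentiate $V'(t)$ in time and replace $\partial_t u$, $\partial_t\bar u$ using the equation once more. The resulting expression splits naturally into three contributions according to the decomposition $\mathcal{L}_a=-\Delta+a/|x|^2$ together with the nonlinearity. The Laplacian contribution, after the standard double integration by parts in $\varphi$, produces the Hessian term $4\Re\sum_{i,j}\int\varphi_{ij}\bar u_i u_j\,dx$ and the bi-Laplacian error $-\int|u|^2\,\Delta\Delta\varphi\,dx$. The inverse-square contribution amounts to computing the commutator of $a/|x|^2$ with $\nabla\varphi\cdot\nabla$; using $\nabla(|x|^{-2}) = -2x/|x|^4$ and integrating by parts gives $4a\int (x\cdot\nabla\varphi)|x|^{-4}|u|^2\,dx$. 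The nonlinear contribution comes from the term $2\Im\int \bar u\,\nabla(|x|^{-b}|u|^\alpha u)\cdot\nabla\varphi\,dx$ (plus its conjugate partner obtained from $\partial_t\bar u$); after integration by parts and recognizing the full factor $|u|^{\alpha+2}$, one finds the $(4/(\alpha+2)-2)\int |x|^{-b}|u|^{\alpha+2}\Delta\varphi\,dx$ piece coming from the $|u|^{\alpha+2}$ factor, while the derivative of the inhomogeneous weight $|x|^{-b}$ produces the extra term $-(4b/(\alpha+2))\int|x|^{-b-2}|u|^{\alpha+2} x\cdot\nabla\varphi\,dx$.

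The main technical point is the rigorous justification of these formal manipulations in the presence of the singular potentials $a/|x|^2$ and $|x|^{-b}$, especially when $a<0$ and $u$ lives only in $H^1_a$. This is handled by approximating $u$ with test functions in $C_c^\infty(\mathbb{R}^N\setminus\{0\})$ and passing to the limit using the equivalence $\|\,\cdot\,\|_{\dot H^1_a}\sim\|\,\cdot\,\|_{\dot H^1}$ (valid since $a>-(N-2)^2/4$), Hardy's inequality to control the $a/|x|^2$ integrals, and the local well-posedness theory to give enough regularity of $u$ for the nonlinear integrand $|x|^{-b}|u|^{\alpha+2}$ to be manipulated against the smooth test weight $\varphi$.
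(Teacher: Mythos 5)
Your proposal is correct and follows exactly the standard derivation (differentiate $V$, substitute the equation, integrate by parts, and track the three contributions from $\Delta$, $a|x|^{-2}$, and $|x|^{-b}|u|^{\alpha}u$ separately) that the paper itself does not write out but delegates to the cited references; the coefficients you describe for the Hessian, bi-Laplacian, potential, and the two nonlinear terms all match the stated identity. The only point worth flagging is that the rigorous justification you sketch (density of $C_c^\infty(\mathbb{R}^N\setminus\{0\})$ plus Hardy and the equivalence $\dot H^1_a\sim\dot H^1$) is the same level of care as the references, so nothing is missing relative to the paper's treatment.
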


\begin{proof}[{\bf Proof of Theorem \ref{GBCrit}b)}]

The blow-up for fast-decaying, negative-energy solutions is proved using Glassey's argument, with the Virial identity by taking $\phi=|x|^2$. It means,

\begin{equation}
   \partial^2_{tt}\int |x|^2|u(t)|^2 dx= 16E[u_0] < 0.
\end{equation}

We now show for radial solutions. Define $\psi: (0,+\infty) \to \mathbb{R}$ as
\begin{equation}
    \psi(r) = \begin{cases}
    r^2, & r \leq 1,\\
    r^2-\frac{(r-1)^4}{2}, & 1 < r \leq 2\\
    \frac{7}{2}, &r > 2.
    \end{cases}
\end{equation}
Note that $\psi$ and all of its (weak) derivatives are essentially bounded. Define also, for $R>0$, $\phi_R(x) = R^2 \psi(x/R)$ and 
\begin{equation}
V_R(t) = \int \phi_R |u(t)|^2dx.
\end{equation}
By Lemma \ref{Morawetz} and the radiality of $u$, in the case $\alpha = \tfrac{4-2b}{N}$, we have
\begin{align}
V_R''(t) &= 16E[u(t)] +\frac{4-2b}{2-b+N}\int(2N-\Delta \phi_R) |x|^{-b}|u|^{\tfrac{4-2b}{N}+2}dx\\  &\quad+\frac{2Nb}{2-b+N}\int(2-\frac{x\cdot\nabla \phi_R}{|x|^2}) |x|^{-b}|u|^{\tfrac{4-2b}{N}+2}dx\\
    &\quad- 4\int (2-\psi''(|x|/R))|\nabla u|^2dx -4a\int(2-\frac{ x\cdot\nabla \phi_R}{|x|^2})\frac{1}{|x|^2}|u|^2dx -\int \Delta\Delta \phi_R |u|^2dx\\
    & \leq 16E[u_0] - 4\int (2-\psi''(|x|/R))|\nabla u|^2 dx + O(\frac{1}{R^b}\int_{|x|\geq R} [\eta_R(x) |u|]^{\tfrac{4-2b}{N}}|u|^2dx )+ O(\frac{1}{R^2} \int|u|^2dx),
    %&\quad - 4a\int_{R \leq |x| \leq 2R}  \eta''(|x|/R) |\nabla u(t)|^2 - 4aR\int_{R \leq |x| \leq 2R} \frac{\eta'(|x|/R)}{|x|^3}|u(t)|^2\\
\end{align}

where 
\begin{equation}
\eta_R(x)=\left\{\frac{1}{2-b+N}\left[(4-2b)(2N-\Delta\phi_R(x))+2Nb\left(2-\frac{x\cdot\nabla \phi_R(x)}{|x|^2}\right)\right]\right\}^\frac{N}{4-2b}.
\end{equation}
By mass conservation, we are left to control the third term in the last inequality, if $R>0$ is large enough. We use Strauss and Young inequalities:
\begin{align}
 \frac{1}{R^b}\int_{|x|\geq R} [\eta_R(x) |u|]^{\tfrac{4-2b}{N}}|u|^2dx &\lesssim \frac{1}{R^b}\|\eta_R u\|_{L^{\infty}_{\{|x|\geq R\}}}^{\tfrac{4-2b}{N}} \int |u|^2dx\\ & \lesssim \frac{1}{R^{\frac{(2-b)(N-1)+Nb}{N}}}(\|\nabla(\eta_R u)\|_{L^2}^{\frac{2-b}{N}}\|u\|^{\frac{2-b}{N}}_{L^2})\int |u|^2dx\\ &\leq \epsilon \|\eta_R \nabla u\|_{L^2}^2 + \frac{C(\epsilon)}{R^2}\|u\|_{L^2}^2.
\end{align}

We therefore obtain

\begin{equation}
    V''_R(t) \leq 16E[u_0]-\int \left[4 (2-\psi''(|x|/R))-\epsilon\eta_R^2(x))\right]|\nabla u|^2 + O(\frac{C(\epsilon)}{R^{2}}).
\end{equation}

By the definition of $\psi$ and $\eta_R$, one can choose $\epsilon>0$ such that, almost everywhere and independently on $R$,
\begin{equation}
    4 (2-\psi''(\cdot/R))-\epsilon\eta_R^2) \geq 0.
\end{equation}

Choosing, afterwards, $R>0$ large enough, we have, 
\begin{equation}
     V''_R(t) \leq 15E[u_0] < 0,
\end{equation}
which implies finite-time blow-up.

\end{proof}
\subsection{Global behavior for the intercritical case}\hfill\\

We now state some coercivity-type (also known as \textit{energy-trapping}) results for the INLS$_a$, which are necessary for Theorem \ref{GBInter}.

\begin{lemma}\label{lem_coerc_1}
Let $N \geq 3$, $a$, $\alpha$ and $b$ as in Theorem \ref{GBInter}, and $f \in H^1_a(\mathbb{R}^N)$. Assume that, for some $\delta_0 > 0$,
\begin{equation}
M(f)^{\tfrac{1-s_c}{s_c}} E_a(f) \leq (1-\delta_0) M(Q)^{\tfrac{1-s_c}{s_c}}E_a(Q), 
\end{equation}
%and
%\begin{equation}
%    \|f \|^{\frac{1-s_c}{s_c}}_{L^2}\|\sqrt{\mathcal{L}_a} f \|_{L^2}  \leq \|Q\|^{\frac{1-s_c}{s_c}}_{L^2} \|\sqrt{\mathcal{L}_a} Q\|_{L^2}.
%    \end{equation}
then there exists $\delta = \delta(\delta_0, N, \alpha, Q, a, b)$ such that
\begin{equation}\label{lem_trap_1_concl}
\left|\|f \|^{\frac{1-s_c}{s_c}}_{L^2}\|\sqrt{\mathcal{L}_a} f \|_{L^2}-\|Q\|^{\frac{1-s_c}{s_c}}_{L^2} \|\sqrt{\mathcal{L}_a} Q\|_{L^2} \right| \geq \delta \|Q\|^{\frac{1-s_c}{s_c}}_{L^2} \|\sqrt{\mathcal{L}_a} Q\|_{L^2}.
\end{equation}
In particular, if 
\begin{equation}\label{lem_trap_below}
    \|f\|^{\frac{1-s_c}{s_c}}_{L^2} \|\sqrt{\mathcal{L}_a} f\|_{L^2} < \|Q\|^{\frac{1-s_c}{s_c}}_{L^2} \|\sqrt{\mathcal{L}_a} Q\|_{L^2},
\end{equation}
then 
\begin{equation}
    \|f\|^{\frac{1-s_c}{s_c}}_{L^2} \|\sqrt{\mathcal{L}_a} f\|_{L^2} \leq (1-\delta) \|Q\|^{\frac{1-s_c}{s_c}}_{L^2} \|\sqrt{\mathcal{L}_a} Q\|_{L^2}.
\end{equation}
Similarly, if
\begin{equation}\label{lem_trap_above}
    \|f\|^{\frac{1-s_c}{s_c}}_{L^2} \|\sqrt{\mathcal{L}_a} f\|_{L^2} > \|Q\|^{\frac{1-s_c}{s_c}}_{L^2} \|\sqrt{\mathcal{L}_a} Q\|_{L^2},
\end{equation}
then 
\begin{equation}
    \|f\|^{\frac{1-s_c}{s_c}}_{L^2} \|\sqrt{\mathcal{L}_a} f\|_{L^2} \geq (1+\delta) \|Q\|^{\frac{1-s_c}{s_c}}_{L^2} \|\sqrt{\mathcal{L}_a} Q\|_{L^2}.
\end{equation}
\end{lemma}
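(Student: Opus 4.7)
\textbf{Proof proposal for Lemma \ref{lem_coerc_1}.}
The plan is the classical Holmer--Roudenko energy-trapping argument, adapted to the inverse-square/inhomogeneous setting via the sharp Gagliardo--Nirenberg inequality (Lemma \ref{lem_gn}) and the Pohozaev identities (Lemma \ref{lem_poho}). First I would reduce the problem to the study of a single real-variable function. To that end, set $y_f := \|f\|_{L^2}^{(1-s_c)/s_c}\|\sqrt{\mathcal{L}_a}f\|_{L^2}$ and $p := \tfrac{N\alpha+2b}{2}$, noting that $p>2$ because $\alpha>\tfrac{4-2b}{N}$. A direct bookkeeping of exponents, using the identity $(p-2)\tfrac{1-s_c}{s_c}=\tfrac{4-2b-\alpha(N-2)}{2}$, shows that multiplying the Gagliardo--Nirenberg inequality by $M(f)^{(1-s_c)/s_c}$ produces
\begin{equation}
M(f)^{\frac{1-s_c}{s_c}}E_a(f)\;\geq\;F(y_f),\qquad F(y):=\tfrac{1}{2}y^{2}-\tfrac{C_a}{\alpha+2}\,y^{p}.
\end{equation}

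Next I would compute $F$ at the ground state. A short calculation combining the two identities of Lemma \ref{lem_poho} with the fact that $Q$ saturates the sharp Gagliardo--Nirenberg constant gives, after eliminating $\|Q\|_{L^2}$ through $\|\sqrt{\mathcal{L}_a}Q\|_{L^2}^{2}=\tfrac{N\alpha+2b}{4-2b-\alpha(N-2)}\|Q\|_{L^2}^{2}$, both
\begin{equation}
y_Q^{\,p-2}\;=\;\frac{\alpha+2}{C_a p}\qquad\text{and}\qquad M(Q)^{\frac{1-s_c}{s_c}}E_a(Q)\;=\;\tfrac{p-2}{2p}\,y_Q^{\,2}\;=\;F(y_Q),
\end{equation}
where $y_Q:=\|Q\|_{L^2}^{(1-s_c)/s_c}\|\sqrt{\mathcal{L}_a}Q\|_{L^2}$. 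The first relation is exactly the critical-point condition $F'(y_Q)=0$; since $F$ is smooth on $[0,\infty)$ with $F(0)=0$, $F(y)\to -\infty$ as $y\to\infty$, and $F'$ has a unique positive zero, $y_Q$ is the unique global maximizer of $F$ on $[0,\infty)$, and $F$ is strictly increasing on $[0,y_Q]$ and strictly decreasing on $[y_Q,\infty)$.

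Combining the two steps, the hypothesis $M(f)^{(1-s_c)/s_c}E_a(f)\le(1-\delta_0)F(y_Q)$ translates into $F(y_f)\le(1-\delta_0)F(y_Q)$. Since $F(y_Q)>0$, the level set $\{F=(1-\delta_0)F(y_Q)\}$ consists of exactly two points $y_1<y_Q<y_2$, and the continuous dependence of the roots of $F(y)=c$ on $c\in(0,F(y_Q))$ yields a quantitative gap: there exists $\delta=\delta(\delta_0,N,\alpha,Q,a,b)>0$ such that $y_f\notin((1-\delta)y_Q,(1+\delta)y_Q)$, which is exactly \eqref{lem_trap_1_concl}. The ``in particular'' statements then follow immediately by the \emph{continuity of the flow}: if $y_f<y_Q$ (resp.\ $y_f>y_Q$) we must have $y_f\le y_1\le(1-\delta)y_Q$ (resp.\ $y_f\ge y_2\ge(1+\delta)y_Q$).

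The only step that requires genuine care is the bookkeeping in the first paragraph: one must verify that all Pohozaev exponents line up so that the multiplication by the correct power of $M(f)$ really collapses both terms in the energy to functions of the single variable $y_f$. Once this algebraic identity $(p-2)(1-s_c)/s_c = (4-2b-\alpha(N-2))/2$ is checked, the remainder of the argument is a routine analysis of the concave-then-convex (rather, concave) profile of $F$ near its unique maximum, and no further PDE input is needed.
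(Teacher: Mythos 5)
Your proposal is correct and follows essentially the same route as the paper: reduce to the single-variable function $F(y)=\tfrac12 y^2-\tfrac{C_a}{\alpha+2}y^{p}$ via the sharp Gagliardo--Nirenberg inequality, identify $y_Q$ as its unique positive critical point through the Pohozaev identities, and extract the gap from the monotonicity of $F$ on either side of $y_Q$ (your version just makes the exponent bookkeeping and the identities $F'(y_Q)=0$, $F(y_Q)=M(Q)^{(1-s_c)/s_c}E_a(Q)$ more explicit than the paper does). The only blemish is the phrase ``continuity of the flow'' in the last step: the two ``in particular'' implications are static statements about a fixed $f$ and follow from \eqref{lem_trap_1_concl} by arithmetic alone (flow continuity only enters later, in Lemma \ref{lem_dicot}), though the argument you actually give via $y_1<y_Q<y_2$ is fine.
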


\begin{proof}
Using Lemma \ref{lem_gn}, we write

\begin{align}
(1-\delta_0)M[Q]^\frac{1-s_c}{s_c}E_a[Q] &\geq  M[f]^\frac{1-s_c}{s_c}E_a[f] \\
&= \frac{M[f]^\frac{1-s_c}{s_c} }{2}\int \left| \sqrt{\mathcal{L}_a} f\right|^2 dx - \frac{M[f]^\frac{1-s_c}{s_c}}{\alpha+2}\int |x|^{-b}|f|^{\alpha+2}dx\\
&\geq \frac{1}{2}\left(\|f\|_{L^2}^\frac{1-s_c}{s_c} \| \sqrt{\mathcal{L}_a}f\|_{L^2}\right)^2 - \frac{C_a}{\alpha+2}\left(\|f\|_{L^2}^\frac{1-s_c}{s_c} \| \sqrt{\mathcal{L}_a}f\|_{L^2}\right)^{\frac{N\alpha+2b}{2}}.\\
\end{align}

Let $P(y) = \frac{1}{2}y^2 - \frac{C_a}{\alpha+2}y^{\frac{N\alpha+2b}{2}}$ and $y(f) = \|f\|_{L^2}^\frac{1-s_c}{s_c} \| \sqrt{\mathcal{L}_a}f\|_{L^2}$. By Lemmas \ref{lem_gn} and \ref{lem_poho}, we see that, if $f = Q$, then $y^* := y(Q)$ satisfies $P(y^*) = M[Q]^\frac{1-s_c}{s_c}E_a[Q]$. Since $P$ is continuous, increasing if $y < y^*$, and decreasing if $y>y^*$, we conclude that there exists $\delta_0 > 0$ such that.
\begin{equation}
P(y) \leq (1-\delta_0) P(y^*) \implies |y-y^*| \geq \delta y^*.
\end{equation}
By continuity, we conclude \eqref{lem_trap_1_concl}.
\end{proof}

\begin{lemma}\label{lem_coerc_2}
Under the conditions \eqref{lem_trap_1_concl} and \eqref{lem_trap_above} of the previous lemma, one also has, for some $\eta = \eta(\delta_0, Q, M(f)) > 0$ and $\epsilon = \epsilon(\delta_0, Q)>0$,
\begin{equation}\label{coerc_blow}
(1+\epsilon)\int\left|\sqrt{\mathcal{L}_a} f\right|^2dx + \left(\frac{N-b}{\alpha+2}-\frac{N}{2}\right)\int|x|^{-b}|f|^{\alpha+2}dx \leq -\eta < 0
\end{equation}
\begin{proof}
Under the notation of the proof of the previous lemma, if $$M(f)^{\tfrac{1-s_c}{s_c}} E_a(f) \leq (1-\delta_0) M(Q)^{\tfrac{1-s_c}{s_c}}E_a(Q)$$  
and
\begin{equation}
    \|f\|^{\frac{1-s_c}{s_c}}_{L^2} \|\sqrt{\mathcal{L}_a} f\|_{L^2} \geq  \|Q\|^{\frac{1-s_c}{s_c}}_{L^2} \|\sqrt{\mathcal{L}_a} Q\|_{L^2},
\end{equation}
we write
\begin{align}\label{ident_second_virial}
(1+\epsilon)\int\left|\sqrt{\mathcal{L}_a} f\right|^2 + &\left(\frac{N-b}{\alpha+2}-\frac{N}{2}\right)\int|x|^{-b}|f|^{\alpha+2} \\&=
\frac{N\alpha+2b}{2}E_a[f]-\left[\frac{N}{2}\left(\alpha-\frac{4-2b}{N}\right)-\epsilon\right]\int\left|\sqrt{\mathcal{L}_a} f\right|^2\\
&\leq (1-\delta_0)\frac{N\alpha+2b}{2}M[f]^{-\frac{1-s_c}{s_c}}M[Q]^{\frac{1-s_c}{s_c}}E_a[Q] \\&\quad\quad\quad\quad\quad-\left[\frac{N}{2}\left(\alpha-\frac{4-2b}{N}\right)-\epsilon\right]M[f]^{-\frac{1-s_c}{s_c}}(y^*)^2\\
&= M[f]^{-\frac{1-s_c}{s_c}}M[Q]^{\frac{1-s_c}{s_c}} 
\underbrace{\left(
\int\left|\sqrt{\mathcal{L}_a} Q\right|^2 + \left(\frac{N-b}{\alpha+2}-\frac{N}{2}\right)\int|x|^{-b}|Q|^{\alpha+2}\right)}_{=0}\\
&\quad\quad\quad\quad\quad-M[f]^{-\frac{1-s_c}{s_c}}\left(\frac{N\alpha+2b}{2}\delta_0 M[Q]^{\frac{1-s_c}{s_c}}E_a[Q]-\epsilon(y^*)^2\right).
    \end{align}
Hence, by taking, say,  $$\eta = M[f]^{-\frac{1-s_c}{s_c}}\delta_0 M[Q]^{\frac{1-s_c}{s_c}}E_a[Q]\frac{N\alpha+2b}{4}>0, $$
the lemma is proved.
\end{proof}

\end{lemma}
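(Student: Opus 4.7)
The plan is to rewrite the left-hand side of \eqref{coerc_blow} in terms of two quantities that interact cleanly with the two hypotheses of the lemma: the energy $E_a[f]$ and the kinetic-type norm $\|\sqrt{\mathcal{L}_a}f\|_{L^2}^2$. First, using the energy identity $E_a[f] = \tfrac{1}{2}\|\sqrt{\mathcal{L}_a}f\|_{L^2}^2 - \tfrac{1}{\alpha+2}\int|x|^{-b}|f|^{\alpha+2}\,dx$ (as employed in the proof of Lemma \ref{lem_coerc_1}), I would solve for $\int|x|^{-b}|f|^{\alpha+2}\,dx$ and substitute it into the LHS of \eqref{coerc_blow}, noting that $\tfrac{N-b}{\alpha+2}-\tfrac{N}{2} = -\tfrac{N\alpha+2b}{2(\alpha+2)}$. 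After a direct computation, this produces an identity of the schematic form
\[
\text{LHS} \;=\; \frac{N\alpha+2b}{2}E_a[f] \;-\; \left[\frac{N\alpha+2b-4}{4}-\epsilon\right]\|\sqrt{\mathcal{L}_a}f\|_{L^2}^2,
\]
whose bracketed coefficient is strictly positive in the intercritical regime (since $N\alpha+2b>4$) once $\epsilon$ is taken sufficiently small.

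Next, I would multiply through by $M(f)^{(1-s_c)/s_c}$ so that both terms sit on the same mass-energy scale. The hypothesis $M(f)^{(1-s_c)/s_c}E_a[f]\le(1-\delta_0)M(Q)^{(1-s_c)/s_c}E_a[Q]$ bounds the first term above, while Lemma \ref{lem_coerc_1} together with assumption \eqref{lem_trap_above} upgrades to $\|f\|_{L^2}^{2(1-s_c)/s_c}\|\sqrt{\mathcal{L}_a}f\|_{L^2}^2 \ge (1+\delta)^2\|Q\|_{L^2}^{2(1-s_c)/s_c}\|\sqrt{\mathcal{L}_a}Q\|_{L^2}^2$, which bounds the second from below. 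The crucial structural input is the Pohozaev identity for $Q$ (Lemma \ref{lem_poho}), which yields exactly
\[
\frac{N\alpha+2b}{2}E_a[Q] \;-\; \frac{N\alpha+2b-4}{4}\|\sqrt{\mathcal{L}_a}Q\|_{L^2}^2 \;=\; 0,
\]
i.e.\ the two extremal contributions cancel at the ground state. Consequently, after rescaling by $M(f)^{(1-s_c)/s_c}$, the $\delta_0$-deficit in the energy and the $\delta$-surplus in the kinetic term both push the expression into the negative range.

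The last step is to choose $\epsilon>0$ small (depending only on $\delta_0$, hence only on $Q$) so that the $+\epsilon\|\sqrt{\mathcal{L}_a}f\|_{L^2}^2$ correction cannot consume the $\delta_0$-gap; this identifies
\[
\eta \;=\; M(f)^{-\frac{1-s_c}{s_c}}\,\delta_0\, M(Q)^{\frac{1-s_c}{s_c}}E_a[Q]\,\frac{N\alpha+2b}{4} \;>\;0,
\]
matching the claimed dependence $\eta=\eta(\delta_0,Q,M(f))$. The main technical point to watch is the bookkeeping of the three small constants: one must verify in order that (i) the bracketed coefficient $\tfrac{N\alpha+2b-4}{4}-\epsilon$ remains strictly positive, (ii) the $\epsilon$-perturbation applied to $\|\sqrt{\mathcal{L}_a}f\|_{L^2}^2$ does not exceed the $\delta_0$-gap after the Pohozaev cancellation, and (iii) the resulting $\eta$ depends on $M(f)$ only through the prefactor $M(f)^{-(1-s_c)/s_c}$, with all $Q$-dependent factors absorbed into a single constant. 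This is the only place where care with the quantitative constants is genuinely needed; the rest is a routine combination of the two previous lemmas.
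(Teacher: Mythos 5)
Your proposal is correct and follows essentially the same route as the paper: rewrite the left-hand side via the energy identity as $\tfrac{N\alpha+2b}{2}E_a[f]$ minus a positive multiple of $\|\sqrt{\mathcal{L}_a}f\|_{L^2}^2$, rescale by $M(f)^{(1-s_c)/s_c}$, and use the Pohozaev cancellation at $Q$ together with the two hypotheses to arrive at the same choice of $\eta$. (Your coefficient $\tfrac{N\alpha+2b-4}{4}$ is in fact the correct one; the paper's bracket $\tfrac{N}{2}\bigl(\alpha-\tfrac{4-2b}{N}\bigr)-\epsilon$ is off by a factor of two, a harmless slip since the subsequent cancellation against the Pohozaev identity is carried out consistently.)
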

An immediate consequence of Lemma \ref{lem_coerc_1} and mass and energy conservation we obtain the following.
\begin{lemma}\label{lem_dicot} Let $N \geq 3$, $a$, $\alpha$ and $b$ as in Theorem \ref{GBInter}, and $u_0 \in H^1(\mathbb{R}^N)$. Denote by $I$ the maximal time of existence of the corresponding solution $u$ to \eqref{INLSa}. If $u_0$ satisfies \eqref{lem_trap_1_concl} and \eqref{lem_trap_below}, then
 \begin{equation}
\sup_{t \in I}\|u_0 \|^{\frac{1-s_c}{s_c}}_{L^2}\|\sqrt{\mathcal{L}_a} u(t) \|_{L^2}  \leq (1-\delta) \|Q\|^{\frac{1-s_c}{s_c}}_{L^2} \|\sqrt{\mathcal{L}_a} Q\|_{L^2}.
 \end{equation}
In particular, $\sup_{t \in I} \|u(t)\|_{H^1_a} < +\infty$, which implies $I = (-\infty, +\infty)$. Alternatively, if $u_0$ satisfies \eqref{lem_trap_1_concl} and \eqref{lem_trap_above}, then for all $t \in I$, 
\begin{equation}
\|u_0 \|^{\frac{1-s_c}{s_c}}_{L^2}\|\sqrt{\mathcal{L}_a} u(t) \|_{L^2}  \geq(1+\delta) \|Q\|^{\frac{1-s_c}{s_c}}_{L^2} \|\sqrt{\mathcal{L}_a} Q\|_{L^2}.
\end{equation}
\end{lemma}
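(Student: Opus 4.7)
The plan is a standard continuity (bootstrap) argument that combines the conservation laws with the coercivity gap provided by Lemma \ref{lem_coerc_1}. First, by conservation of mass and energy, $M[u(t)] = M[u_0]$ and $E_a[u(t)]=E_a[u_0]$ for every $t\in I$, so the function $f=u(t)$ satisfies the mass-energy assumption \eqref{lem_trap_1_concl} \emph{uniformly} in $t$, with the same $\delta_0$ as for $u_0$. Thus Lemma \ref{lem_coerc_1} applied pointwise to $u(t)$ yields $|y(t)-y^*|\geq \delta\, y^*$ for every $t\in I$, where I set
$$
y(t) := \|u_0\|_{L^2}^{\frac{1-s_c}{s_c}}\|\sqrt{\mathcal{L}_a}\, u(t)\|_{L^2},
\qquad
y^* := \|Q\|_{L^2}^{\frac{1-s_c}{s_c}}\|\sqrt{\mathcal{L}_a}\, Q\|_{L^2}.
$$

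Next, I would invoke the local theory (Theorem \ref{LWP}, or the well-posedness from \cite{Suzuki}) to guarantee that $u\in C(I;H^1_a)$, so that $t\mapsto y(t)$ is continuous on $I$. Since $y(I)$ is a connected subset of $\mathbb{R}\setminus((1-\delta)y^*,(1+\delta)y^*)$, it must lie entirely in one of the two components. In the subcritical case, hypothesis \eqref{lem_trap_below} gives $y(0)<y^*$, forcing $y(t)\leq (1-\delta)y^*$ for all $t\in I$; in the supercritical case, hypothesis \eqref{lem_trap_above} gives $y(0)>y^*$, forcing $y(t)\geq (1+\delta)y^*$ for all $t\in I$. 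This yields the two displayed inequalities.

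Finally, for the global existence statement, from the subcritical bound and mass conservation one has $\|u(t)\|_{L^2}^2+\|\sqrt{\mathcal{L}_a}u(t)\|_{L^2}^2\lesssim 1$ uniformly in $t\in I$, i.e., $\sup_{t\in I}\|u(t)\|_{H^1_a}<+\infty$. The blow-up alternative from the local well-posedness theory then precludes finite-time breakdown, giving $I=(-\infty,+\infty)$. There is no real obstacle here; the only point worth attention is to ensure we are applying Lemma \ref{lem_coerc_1} along the flow (which is allowed precisely because of the conservation laws), and that the continuity of $\|\sqrt{\mathcal{L}_a}u(t)\|_{L^2}$ in $t$ is available from the local theory---both facts being immediate under the hypotheses of Theorem \ref{GBInter}.
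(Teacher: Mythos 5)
Your proof is correct and follows exactly the route the paper intends: the paper states Lemma \ref{lem_dicot} as ``an immediate consequence of Lemma \ref{lem_coerc_1} and mass and energy conservation'' without writing out the details, and your continuity/connectedness argument (conservation laws propagate the hypotheses of Lemma \ref{lem_coerc_1} along the flow, the gap forces the trajectory of $t\mapsto\|u_0\|_{L^2}^{(1-s_c)/s_c}\|\sqrt{\mathcal{L}_a}u(t)\|_{L^2}$ to stay on one side, and the uniform $H^1_a$ bound plus the blow-up alternative give global existence) is precisely the standard elaboration of that remark.
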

\begin{proof}[\bf {Proof of Theorem \ref{GBInter}}] Theorem \ref{GBInter}a) follows directly from Lemma \ref{lem_dicot}.

Now we prove Theorem \ref{GBInter}b).
Given Lemmas \ref{lem_coerc_2} and \ref{lem_dicot}, the result for $|x|u_0 \in L^2$ follows from Lemma \ref{Morawetz}, which gives, in this context:
\begin{equation}
 \partial^2_{tt}\int |x|^2|u(t)|^2dx =  8\left[\int\left|\sqrt{\mathcal{L}_a} f\right|^2dx + \left(\frac{N-b}{\alpha+2}-\frac{N}{2}\right)\int|x|^{-b}|f|^{\alpha+2}dx\right].
\end{equation}
By \eqref{coerc_blow}, we then have, for all $t$ in the interior of the maximal interval of existence of $u$,
\begin{equation}
\partial^2_{tt}\int |x|^2|u(t)|^2dx \leq -8\eta < 0,
\end{equation}
which shows that $u$ blows up in both finite positive and negative times.

To prove blow-up in the radial case, we employ Lemma \ref{Morawetz} again. Define $\psi: (0,+\infty) \to \mathbb{R}$ as a smooth function satisfying
\begin{equation}
    \psi(r) = \begin{cases}
    r^2, & r \leq 1,\\
    4, &r \geq 2
    \end{cases}
\end{equation}
and we also impose that $\psi''(r) \leq 2$ for all $r >0$. Define also, for $R>0$, $\phi_R(x) = R^2 \psi(x/R)$ and 
\begin{equation}
    V_R(t) = \int \phi_R |u(t)|^2dx.
\end{equation}
By Lemma \ref{Morawetz}, the non-negativity of $\psi''$ and the radiality of $u$,
\begin{align}
    V_R''(t) &= 8\int\left|\sqrt{\mathcal{L}_a} f\right|^2 dx+ 8\left(\frac{N-b}{\alpha+2}-\frac{N}{2}\right)\int|x|^{-b}|f|^{\alpha+2}dx\\
    &-\left(\frac{4}{\alpha+2}-2\right)\int_{R \leq |x| \leq 2R}\Delta \phi_R |x|^{-b}|u|^{\alpha+2}dx  +\frac{4b}{\alpha+2}\int_{R \leq |x| \leq 2R}x\cdot\nabla \phi_R |x|^{-b-2}|u|^{\alpha+2}dx\\
    &- 4\int_{R \leq |x| \leq 2R} \psi''(|x|/R)|\nabla u|^2dx-4a\int_{R \leq |x| \leq 2R}\frac{ x\cdot\nabla \phi_R}{|x|^4}|u|^2 -\int \Delta\Delta \phi_R |u|^2dx\\
    & \leq 8\left[ \int\left|\sqrt{\mathcal{L}_a} f\right|^2 dx+ \left(\frac{N-b}{\alpha+2}-\frac{N}{2}\right)\int|x|^{-b}|f|^{\alpha+2}dx\right] + O(\frac{1}{R^b}\int_{|x|\geq R} |u|^{\alpha+2}) + O(\frac{1}{R^2} \int|u|^2).
    %&\quad - 4a\int_{R \leq |x| \leq 2R}  \eta''(|x|/R) |\nabla u(t)|^2 - 4aR\int_{R \leq |x| \leq 2R} \frac{\eta'(|x|/R)}{|x|^3}|u(t)|^2\\
\end{align}
By the coercivity lemmas and mass conservation, we are left to control the middle term in the last inequality, if $R>0$ is large enough. We use Strauss and Young inequalities:
\begin{align}
   \frac{1}{R^b}\int_{|x|\geq R} |u|^{\alpha+2}dx &\lesssim \frac{1}{R^b}\|u\|_{L^{\infty}_{\{|x|\geq R\}}}^{\alpha} \int |u|^2dx\lesssim \frac{1}{R^{\frac{\alpha(N-1)+2b}{2}}}(\|\nabla u\|_{L^2}^{\frac{\alpha}{2}}\|u\|^{\frac{\alpha}{2}}_{L^2})\int |u|^2dx\\ &\lesssim \epsilon \|\nabla u\|_{L^2}^2 + C(\epsilon)\left(\frac{1}{R^{\frac{\alpha(N-1)+2b}{2}}}\|u\|_{L^2}^{2+\frac{\alpha}{2}}dx\right)^{\frac{4}{4-\alpha}}.
\end{align}
Thus, by Lemma \ref{lem_coerc_2}, by choosing $R>0$ depending only on the mass of $u$, we have, for all times
\begin{equation}
    V_R''(t) \leq -7\eta<0,
\end{equation}
which implies blowup in finite positive and negative times.
\end{proof}

\ 

\section{Well-posedness theory via Kato's method}\label{Sec4}

In this section we prove the well-posedness results using the Kato method. The proofs follow from a contraction mapping argument based on the Strichartz estimates. In view of the singular factor $|x|^{-b}$ in the nonlinearity, we will divide our analysis in two regions. Indeed, consider a unit ball, $ B =\{x \in \mathbb{R}^N; |x| \leq 1 \}$. A simple computation reveals that
\begin{equation}\label{xb}
\||x|^{-b}\|_{L^\gamma(B)} < +\infty \quad \textnormal{if}\quad \frac{N}{\gamma}-b > 0
\quad
\textnormal{and}\quad \||x|^{-b}\|_{L^\gamma(B^C)} < +\infty \quad \textnormal{if}\quad \frac{N}{\gamma}-b < 0.
\end{equation}

Moreover, if $F(x, z) =|x|^{-b}|z|^\alpha z$, then
\begin{equation}\label{ID}
|F(x, z) - F(x, w)| \lesssim |x|^{-b}(|z|^\alpha + |w|^\alpha)|z -w|.    
\end{equation}

Before stating the lemmas, we define the norm
\begin{equation}\label{strichartnorm}
\|\langle\sqrt{\mathcal{L}_a}\rangle u\|_{S(L^2;J)}=\| u\|_{S(L^2;J)}+\|\sqrt{\mathcal{L}_a} u\|_{S(L^2;J)},
\end{equation}
where $J\subseteq \mathbb{R}$. For this norms, it is worth mentioning the following:

\begin{remark}\label{re_s_adm}
Since we use the equivalence of Sobolev spaces in $\|\sqrt{\mathcal{L}_a}u\|_{S(L^2)}$, for these terms we only employ $S$-admissible pairs which satisfy the conditions of Remark \ref{equivalence}, that is, $1<r<\frac{N}{s}$ if $a>0$ and $\frac{N}{N-\rho}<r<\frac{N}{s+\rho}$ if $a<0$. We do the same with $\|\sqrt{\mathcal{L}_a}u\|_{S'(L^2)}$. 
\end{remark}

\subsection{Local Theory}
First, we establish good estimates for the nonlinearity in the Strichartz spaces.

\begin{lemma}\label{Lemalocal} Let $N\geq 3$, $0< b<\min\{\frac{N}{2},2\}$ and $a>-\frac{(N-2)^2}{4}+\left(\frac{\alpha(N-2)-(2-2b)}{2(\alpha+1)}\right)^2$. If 
$\max\left\{0,\frac{2-2b}{N-2}\right\}<\alpha<\frac{4-2b}{N-2}$, then the following statement holds  
\begin{itemize}
\item [(i)] $\left \||x|^{-b}|u|^\alpha v\right \|_{L^{2}_IL_x^{\frac{2N}{N+2}}}\;\leq \;c \;(T^{\theta_1}+T^{\theta_2})\| \sqrt{\mathcal{L}_a} u\|^{\alpha}_{S(L^2;I)}\|v\|_{S(L^2;I)}$
\item[(ii)] $\left \|\sqrt{\mathcal{L}_a}(|x|^{-b}|u|^\alpha u)\right \|_{L^{2}_IL_x^{\frac{2N}{N+2}}}\;\leq \;c\; (T^{\theta_1}+T^{\theta_2})\| \sqrt{\mathcal{L}_a} u\|^{\alpha+1}_{S(L^2;I)},$
\end{itemize}
%$$
%\left \|\nabla(|x|^{-b}|u|^\alpha u)\right \|_{L^{2}_IL_x^{\frac{2N}{N+2}}}\leq c (T^{\theta_1}+T^{\theta_2})\| u\|^{\alpha+1}_{I},
%$$
where $c,\theta_1,\theta_2 >0$ and $I=[0,T]$.
\begin{proof} We start with (ii). By using the equivalence of Sobolev spaces (Remark \ref{equivalence}) and dividing the estimate in $B$ and $B^C$, we have
\begin{eqnarray}\label{LLii1}
\begin{split}
\left \|\sqrt{\mathcal{L}_a}(|x|^{-b}|u|^\alpha u)\right \|_{L^{2}_IL_x^{\frac{2N}{N+2}}} &\lesssim \left \|\nabla(|x|^{-b}|u|^\alpha u)\right \|_{L^{2}_IL_x^{\frac{2N}{N+2}}} \\
&\leq \left \|\nabla(|x|^{-b}|u|^\alpha u)\right \|_{L^{2}_IL_x^{\frac{2N}{N+2}}(B^C)}+\left \|\nabla(|x|^{-b}|u|^\alpha u)\right \|_{L^{2}_IL_x^{\frac{2N}{N+2}}(B)}. 
\end{split}
\end{eqnarray}
Let $A \subset \mathbb{R}^N$ denotes either $B$ or $B^C$. Applying H\"older's inequality first in space and then in time, we deduce 
\begin{equation}\label{LLii2}
\begin{split}
 \left\|\nabla(|x|^{-b}|u|^\alpha u)\right\|_{L^2_IL_x^{\frac{2N}{N+2}}(A)} &  \leq \left \|  \||x|^{-b}\|_{L^\gamma(A)} \|\nabla(|u|^\alpha u) \|_{L^{\beta}_x} + \|\nabla(|x|^{-b})\|_{L^d(A)}\|u\|^{\alpha +1}_{L^{(\alpha+1)e}_x}  \right\|_{L^2_I}  \\
&\lesssim  \left\| \| |x|^{-b} \|_{L^\gamma(A)}  \|u\|^\alpha_{L_x^{\alpha r_1}}   \| \nabla u \|_{L_x^{r}} + \||x|^{-b-1}\|_{L^d(A)}\| \nabla u\|^{\alpha +1}_{L^r_x}\right\|_{L^2_I}   \\
&\lesssim T^{\frac{1}{q^*}}\left( \| |x|^{-b} \|_{L^\gamma(A)} \| \nabla u \|^{\alpha+1}_{L^q_IL_x^{r}}+ \||x|^{-b-1}\|_{L^d(A)}\| \nabla u\|^{\alpha +1}_{L^q_IL^r_x}\right),
\end{split}
\end{equation}
where we also have used the Sobolev inequality. Here, we must have the  relations
\[
\begin{cases}
\frac{N+2}{2N}=\frac{1}{\gamma}+\frac{1}{\beta}=\frac{1}{d}+\frac{1}{e},\qquad \frac{1}{\beta}=\frac{1}{r_1}+\frac{1}{r} \\
1=\frac{N}{r}-\frac{N}{\alpha r_1}=\frac{N}{r}-\frac{N}{(\alpha+1)e}\;,\qquad r<N\\
\frac{1}{2}=\frac{1}{q^*}+\frac{\alpha+1}{q},
\end{cases}
\]
which in turn are equivalent to 
\begin{equation}\label{L1C22}
\begin{cases} 
\frac{N}{\gamma}=\frac{N+2}{2}-\frac{N(\alpha+1)}{r}+\alpha\\ 
\frac{N}{d}=\frac{N+2}{2}-\frac{N(\alpha+1)}{r}+\alpha +1\\
\frac{1}{q^*}=\frac{1}{2}-\frac{\alpha+1}{q}.
\end{cases}
\end{equation}
Our goal is to find a pair $(q,r)$ $L^2$-admissible such that $\||x|^{-b}\|_{L^\gamma(B)}$ and $\||x|^{-b-1}\|_{L^d(B)}$ are finite (see \eqref{xb}), $r < N$ and $\frac{1}{q^*}>0$. Let $(q_{\pm},r_{\pm})$ defined by\footnote{It is easy to see that $q\geq 2$. Moreover, note that the denominator of $q$ is positive for $\alpha>0$, if $b\geq 1$ and $\alpha>\frac{2-2b}{N-2}$, if $b<1$.}
$$
r_{\pm}=\frac{2N(\alpha+1)}{N+2+2\alpha-2b\pm \varepsilon} \quad \textnormal{and} \quad q_{\pm}=\frac{4(\alpha+1)}{\alpha(N-2)-(2-2b) \mp \varepsilon},
$$
for $\varepsilon>0$ sufficiently small. Indeed, we can easily see that $(q_\pm,r\pm)$ is $S$-admissible, $r_\pm<N$ (here we need to use that $b<\frac{N}{2}$) and
$$
\frac{1}{q^*_\pm}=\frac{1}{2}-\frac{\alpha+1}{q_\pm}=\frac{4-2b-\alpha(N-2)\pm\varepsilon}{4}.
$$
Now, if $A=B^C$ we choose $(q,r)=(q_+,r_+)$ and  $\theta_1=\frac{1}{q^*_+}$. Then, $\frac{N}{\gamma}-b>0$ and $\frac{N}{d}-b-1>0$, and
consequently, $\||x|^{-b}\|_{L^\gamma (B^C)},\||x|^{-b-1}\|_{L^d(B^C)}<\infty$, by \eqref{xb}. On the other hand, if $A=B$ we choose the pair
$(q,r)= (q_-,r_-)$ and $\theta_2=\frac{1}{q^*_-}$, so\footnote{Since, $\alpha<\frac{4-2b}{N-2}$ we have $\theta_1,\theta_2>0$.} we also get $\||x|^{-b}\|_{L^\gamma (B)},\||x|^{-b-1}\|_{L^d(B)}<\infty$. Hence, the relations \eqref{LLii1} and \eqref{LLii2} implies that
$$
\left \|\sqrt{\mathcal{L}_a}(|x|^{-b}|u|^\alpha u)\right \|_{L^{2}_IL_x^{\frac{2N}{N+2}}} \lesssim (T^{\theta_1}+T^{\theta_2})\| \nabla u\|^{\alpha+1}_{L^q_IL^r_x}.
$$
Finally, if $\frac{N}{N-\rho}<r<\frac{N}{1+\rho}$ and applying again Remark \ref{equivalence} we complete the proof of part (ii). Indeed, using the value of $\rho$ given in \eqref{ro}, it is easy
to see that $r>\frac{N}{N-\rho}$. In addition, $r<\frac{N}{1+\rho}$ is equivalent to $2\rho (\alpha+1)<N-2b$ with $b<\frac{N}{2}$, which is true assuming our hypothesis on $a$. 

\ The proof of (i) is essentially the same as in (ii). It means, we have
\begin{eqnarray*}
\left \||x|^{-b}|u|^\alpha v\right \|_{L^{2}_IL_x^{\frac{2N}{N+2}}(A)} &\lesssim &\left\| \| |x|^{-b} \|_{L^\gamma(A)}  \|u\|^\alpha_{L_x^{\alpha r_1}}   \|v \|_{L_x^{r}}\right\|_{L^2_I}\\
&\lesssim & T^{\frac{1}{q^*}} \| |x|^{-b} \|_{L^\gamma(A)} \| \nabla u \|^{\alpha}_{L^q_IL_x^{r}} \| v \|_{L^q_IL_x^{r}},
\end{eqnarray*}
where $\frac{N}{\gamma}=\frac{N+2}{2}-\frac{N(\alpha+1)}{r}+\alpha$ and $\frac{1}{2}=\frac{1}{q^*}+\frac{\alpha+1}{q}$. Choosing $(q,r)$ and arguing exactly as in (ii) we obtain (i). 
\end{proof}
\end{lemma}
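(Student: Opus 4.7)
The plan is to reduce the $\mathcal{L}_a$-flavored estimates to ordinary derivative estimates via the equivalence of Sobolev spaces (Remark~\ref{equivalence}), and then to cope with the spatial singularity $|x|^{-b}$ by splitting $\mathbb{R}^N$ into the unit ball $B$ and its complement $B^C$, where $|x|^{-b}$ belongs to different Lebesgue spaces. This decomposition is the standard device for INLS-type estimates: it allows one to use two distinct $S$-admissible pairs $(q_\pm,r_\pm)$ --- one tuned so that $|x|^{-b}\in L^\gamma(B^C)$ and another so that $|x|^{-b}\in L^\gamma(B)$ --- which is precisely what produces the two time exponents $T^{\theta_1}+T^{\theta_2}$.

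For part (ii), I would first replace $\sqrt{\mathcal{L}_a}$ by $\nabla$ via the first half of the equivalence lemma; this is legitimate since $\tfrac{2N}{N+2}$ sits inside the admissible range. I would then apply the Leibniz rule to $\nabla(|x|^{-b}|u|^\alpha u)$, producing two pieces: a piece $|x|^{-b}\nabla(|u|^\alpha u)$, which I would estimate in $L^{\frac{2N}{N+2}}_x$ by H\"older with a triple of exponents $(\gamma,\alpha r_1, r)$ followed by the Sobolev embedding $\|u\|_{L^{\alpha r_1}}\lesssim\|\nabla u\|_{L^r}$; and a piece $|x|^{-b-1}|u|^{\alpha+1}$, estimated by H\"older with $(d,(\alpha+1)e)$ and the same Sobolev inequality. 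H\"older in time would then pull out a factor $T^{1/q^\ast}$, with $\tfrac{1}{q^\ast}=\tfrac{1}{2}-\tfrac{\alpha+1}{q}$.

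The crux of the argument --- and the step where I expect the real difficulty --- is choosing the $S$-admissible pair $(q,r)$ so that four constraints hold simultaneously: (a) $r<N$ for the Sobolev embedding; (b) $\tfrac{N}{\gamma}-b$ and $\tfrac{N}{d}-(b+1)$ have the right sign on each of $B$ and $B^C$ so that the weights are locally integrable; (c) $\tfrac{1}{q^\ast}>0$ so that $T^{\theta_i}$ is a genuine positive power; and (d) when we reverse the equivalence at the end and replace $\nabla u$ back by $\sqrt{\mathcal{L}_a}u$, the exponent $r$ must lie in the window $\tfrac{N}{N-\rho}<r<\tfrac{N}{1+\rho}$ demanded by Remark~\ref{equivalence} in the range $a<0$. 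A direct computation shows that condition (d) is equivalent to $2\rho(\alpha+1)<N-2b$, which is exactly the lower bound on $a$ assumed in the lemma; this is the reason for the technical hypothesis on $a$. The natural symmetric ansatz
\begin{equation*}
r_\pm=\tfrac{2N(\alpha+1)}{N+2+2\alpha-2b\pm\varepsilon},\qquad q_\pm=\tfrac{4(\alpha+1)}{\alpha(N-2)-(2-2b)\mp\varepsilon},
\end{equation*}
with $\varepsilon>0$ small, yields an $S$-admissible pair; assigning the $+$ sign to $B^C$ (integrability at infinity) and the $-$ sign to $B$ (integrability at the origin) produces two positive exponents $\theta_{1,2}=\tfrac{1}{q^\ast_\pm}$, with positivity guaranteed by $\alpha<\tfrac{4-2b}{N-2}$.

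Part (i) follows the same scheme but is simpler, since no derivative falls on the nonlinearity: a single H\"older in space with exponents $(\gamma,\alpha r_1, r)$ followed by Sobolev on the $u$-factors and H\"older in time suffices, with the very same choice of $(q_\pm,r_\pm)$. Because only the $u$-factors are differentiated (through the Sobolev embedding), the equivalence of Sobolev spaces is applied exclusively to them, yielding the bound in the $\|\sqrt{\mathcal{L}_a}u\|_{S(L^2;I)}^\alpha\,\|v\|_{S(L^2;I)}$ form claimed.
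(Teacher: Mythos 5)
Your proposal follows essentially the same route as the paper's own proof: reduce $\sqrt{\mathcal{L}_a}$ to $\nabla$ via the equivalence of Sobolev spaces, split into $B$ and $B^C$, apply H\"older with exponents $(\gamma,\alpha r_1,r)$ plus Sobolev and H\"older in time, and use the same $\varepsilon$-perturbed admissible pairs $(q_\pm,r_\pm)$ with the $+$ sign on $B^C$ and the $-$ sign on $B$, closing with the reverse equivalence under $\tfrac{N}{N-\rho}<r<\tfrac{N}{1+\rho}$, i.e.\ $2\rho(\alpha+1)<N-2b$. The argument is correct and matches the paper in all essentials.
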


\ In the next lemma we consider the case, $\alpha\leq \frac{2-2b}{N-2}$ for $0<b<1$.

\begin{lemma}\label{Lemalocal1} Let $N\geq 3$, $0< b<1$ and $a>-\frac{(N-2)^2}{4}$. If $\frac{2-2b}{N}<\alpha\leq\frac{2-2b}{N-2}$, then the following statement hold
\begin{itemize}
\item [(i)] $\left \||x|^{-b}|u|^\alpha v\right \|_{L^{2}_IL_x^{\frac{2N}{N+2}}}\;\leq\; c \left(T^{\theta_1}+T^{\theta_2}\right)\|\langle \sqrt{\mathcal{L}_a}\rangle u\|^{\alpha}_{S(L^2;I)}\|v\|_{S(L^2;I)}$
\item[(ii)] $\left \|\sqrt{\mathcal{L}_a}(|x|^{-b}|u|^\alpha u)\right \|_{L^{2}_IL_x^{\frac{2N}{N+2}}}\;\leq \;c \left(T^{\theta_1}+T^{\theta_2}\right)\| \langle\sqrt{\mathcal{L}_a}\rangle u\|^{\alpha+1}_{S(L^2;I)},$
\end{itemize}
%c (T^{\theta_1}+T^{\theta_2})\| \sqrt{\mathcal{L}_a} u\|^{\alpha+1}_{S(L^2;I)},$
%$$
%\left \|\nabla(|x|^{-b}|u|^\alpha u)\right \|_{L^{2}_IL_x^{\frac{2N}{N+2}}}\leq c (T^{\theta_1}+T^{\theta_2})\| u\|^{\alpha+1}_{I},
%$$
where $c,\theta_1,\theta_2 >0$ and $I=[0,T]$.
\begin{proof} 
We first estimate (ii). We divide in two regions, $B$ and $B^C$. Similarly as the previous lemma, it follows that
\begin{equation}
\begin{split}
 \left\|\nabla(|x|^{-b}|u|^\alpha u)\right\|_{L^2_IL_x^{\frac{2N}{N+2}}(B)} 
&\lesssim  \left\| \| |x|^{-b} \|_{L^\gamma(B)}  \|u\|^\alpha_{L_x^{\alpha r_1}}   \| \nabla u \|_{L_x^{2}} + \||x|^{-b-1}\|_{L^d(B)}\| u\|^{\alpha }_{L^{\alpha r_1}_x} \|u\|_{L^{\frac{2N}{N-2}}_x}\right\|_{L^2_I}   \\
&\lesssim T^{\frac{1}{q^*}}\left( \| \nabla u \|^{\alpha}_{L^q_IL_x^{r}}+ \| \nabla u\|^{\alpha }_{L^q_IL^r_x}\right)\| \nabla u \|_{L_x^{2}},
\end{split}
\end{equation}
where 
\[
\begin{cases}
\frac{N+2}{2N}=\frac{1}{\gamma}+\frac{1}{r_1}+\frac{1}{r}=\frac{1}{d}+\frac{1}{r_1}+\frac{N-2}{2N} \\
1=\frac{N}{r}-\frac{N}{\alpha r_1}\\
\frac{1}{2}=\frac{1}{q^*}+\frac{\alpha}{q},
\end{cases}
\]
which implies that
\begin{equation}
\begin{cases} 
\frac{N}{\gamma}=\frac{N}{d}-1=1-\frac{N\alpha}{r}+\alpha\\
\frac{1}{q^*}=\frac{1}{2}-\frac{\alpha+1}{q}.
\end{cases}
\end{equation}
For $\varepsilon > 0$ small, by choosing the $S$-admissible pair $(q, r)$ defined by\footnote{Note that, the denominator of $r$ is positive since $b<1$.}
$$
q=\frac{4(2-2b+\varepsilon)}{\varepsilon(N-2)}\quad\textnormal{and}\quad r=\frac{N(2-2b+\varepsilon)}{(1-b)N+\varepsilon},
$$
we deduce  that $\frac{N}{\gamma}-b>0$ and $\frac{N}{d}-1-b>0$ (assuming, $\alpha\leq \frac{2-2b}{N-2}$). It leads to $\||x|^{-b} \|_{L^\gamma(B)} $ and $\||x|^{-b-1} \|_{L^d(B)}$ are finite. Moreover, it is easy to see that $\frac{1}{q*}>0$. On the other hand, since $\frac{N}{N-\rho}<r<\frac{N}{1+\rho}$ and\footnote{Using the value of $\rho$, it is easy to check $r>\frac{N}{N-\rho}$. Moreover, chossing $\varepsilon<\frac{(1-b)\sqrt{(N-2)^2+4a}}{\rho}$ we have $r<\frac{N}{1+\rho}$.} by the equivalence of Sobolev spaces one has
$$
\left\|\sqrt{\mathcal{L}_a}(|x|^{-b}|u|^\alpha u)\right\|_{L^2_IL_x^{\frac{2N}{N+2}}(B)} 
\lesssim T^{\theta_1} \| \sqrt{\mathcal{L}_a}u\|^\alpha_{L^q_IL^r_x} \|u\|_{L^\infty_IH^1_a},
$$
where $\theta_1=\frac{1}{q^*}$. We now consider the estimate on $B^C$. The H\"older inequality, equivalence of Sobolev spaces and Sobolev embedding imply that 
\begin{align*}
%\begin{split}
 \left\|\nabla(|x|^{-b}|u|^\alpha u)\right\|_{L^2_IL_x^{\frac{2N}{N+2}}(B^C)} 
&\lesssim  \left\| \| |x|^{-b} \|_{L^\gamma(B^C)}  \|u\|^\alpha_{L_x^{\alpha r_1}}   \| \nabla u \|_{L_x^{2}} + \||x|^{-b-1}\|_{L^d(B^C)}\| u\|^{\alpha }_{L^{\alpha r_1}_x} \|u\|_{L^{\frac{2N}{N-2}}_x}\right\|_{L^2_I}   \\
&\lesssim T^{\frac{1}{2}}\| u\|^{\alpha }_{L^\infty_IH^1_x}\| \nabla u \|_{L^\infty_IL_x^{2}}
\lesssim T^{\theta_2}\| u\|^{\alpha +1}_{L^\infty_IH^1_a},
%\end{split}
\end{align*}
where $\theta_2=\frac{1}{2}$ and
$$
\frac{N}{\gamma}=\frac{N}{d}-1=1-\frac{N}{r_1}.
$$
We need to show that $H^1\hookrightarrow L^{\alpha r_1}$ and $\frac{N}{\gamma}-b=\frac{N}{d}-b-1<0$. To this end, we choose $r_1=\frac{N}{1-b+\varepsilon}$ for $\varepsilon>0$ small, we have $2< \alpha r_1< \frac{2N}{N-2}$ by hypothesis\footnote{Note that in the particular case, $b=0$, if $\alpha\geq  \frac{2}{N}$ then $\alpha r_1\geq 2$, so $H^1\hookrightarrow L^{\alpha r_1}$. That is, in this case we can consider $\alpha=\frac{2}{N}$. }  $\frac{2(1-b)}{N}<\alpha \leq \frac{2(1-b)}{N-2}$.

\ To show (i) is only replace $|x|^{-b}|u|^\alpha \nabla u$ by $|x|^{-b}|u|^\alpha v$ in the proof of (ii). This completes the proof of Lemma \ref{Lemalocal1}.
\end{proof}
\end{lemma}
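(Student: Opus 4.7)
The plan is to follow the template of Lemma \ref{Lemalocal}: split the spatial domain into the unit ball $B$ and its complement $B^{C}$, use H\"older in $(x,t)$ to distribute norms, pick an $S$-admissible pair $(q,r)$ whose exponents make $|x|^{-b}$ integrable on the relevant region, and finally invoke the equivalence of Sobolev spaces (Remark \ref{equivalence}) to pass between $\|\nabla u\|_{L^{r}}$ and $\|\sqrt{\mathcal{L}_a} u\|_{L^{r}}$. The essential new feature here, compared to Lemma \ref{Lemalocal}, is that $\alpha$ sits in the lower range $\alpha\le \tfrac{2-2b}{N-2}$, so on the far-field piece one cannot afford to put the whole nonlinearity in a single Strichartz norm; instead one factor must be absorbed into $L^\infty_I H^1_a$. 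This is exactly what produces the inhomogeneous norm $\|\langle\sqrt{\mathcal{L}_a}\rangle u\|_{S(L^2)}$ on the right-hand side.

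I would begin with (ii). After expanding $\nabla(|x|^{-b}|u|^\alpha u)$ via Leibniz, one must estimate $|x|^{-b}|u|^{\alpha}\nabla u$ and $|x|^{-b-1}|u|^{\alpha+1}$ in $L^{2}_{I}L^{2N/(N+2)}_{x}$. On $B$, apply H\"older in $x$ with exponents $(\gamma,\alpha r_1,r)$ for the first term and $(d,\alpha r_1,\tfrac{2N}{N-2})$ for the second, then Sobolev embedding to absorb the $|u|^\alpha$ factor. The resulting scaling relations force $\tfrac{N}{\gamma}=1-\tfrac{N\alpha}{r}+\alpha$, $\tfrac{N}{d}=\tfrac{N}{\gamma}+1$ and $\tfrac{1}{q^{\ast}}=\tfrac12-\tfrac{\alpha+1}{q}$. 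Choosing the $S$-admissible pair
$$q=\tfrac{4(2-2b+\varepsilon)}{\varepsilon(N-2)},\qquad r=\tfrac{N(2-2b+\varepsilon)}{(1-b)N+\varepsilon},$$
with $\varepsilon>0$ small, one verifies $\tfrac{N}{\gamma}>b$ and $\tfrac{N}{d}>b+1$ (here the constraints $\alpha\le\tfrac{2-2b}{N-2}$ and $b<1$ enter), hence $|x|^{-b}\in L^\gamma(B)$ and $|x|^{-b-1}\in L^d(B)$, and it gives the positive time exponent $\theta_1=1/q^{\ast}$.

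On $B^C$ the integrability of $|x|^{-b}$ reverses, so I would rearrange the H\"older partition: put the $\alpha$ copies of $u$ into $L^\infty_I L^{\alpha r_1}_x$ with $r_1=\tfrac{N}{1-b+\varepsilon}$ and use $H^1\hookrightarrow L^{\alpha r_1}$, which requires $2\le\alpha r_1\le\tfrac{2N}{N-2}$ and is precisely what motivates the lower bound $\alpha>\tfrac{2-2b}{N}$; the derivative-carrying factor then lands in $L^\infty_I L^2_x$ and time H\"older yields the trivial exponent $\theta_2=1/2$. Part (i) follows from an identical H\"older/admissibility layout with $\nabla u$ replaced by $v$ and no Leibniz expansion, so the output carries one $\|v\|_{S(L^2)}$ Strichartz factor, $\alpha$ copies of $\|\langle\sqrt{\mathcal{L}_a}\rangle u\|_{S(L^2)}$, and the same $T^{\theta_1}+T^{\theta_2}$ prefactor.

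The step I expect to be most delicate is verifying that the chosen $r$ lies in the strip $\tfrac{N}{N-\rho}<r<\tfrac{N}{1+\rho}$ required by the equivalence of Sobolev spaces (Remark \ref{equivalence}) when $a<0$: since the hypothesis only asks $a>-\tfrac{(N-2)^2}{4}$, $\rho$ can be arbitrarily close to $\tfrac{N-2}{2}$, which forces $r$ close to $\tfrac{2N}{N-2}$---exactly the regime our prescription produces in the limit $\varepsilon\to 0^{+}$. One must therefore tune $\varepsilon$ small depending on $\rho$ (concretely $\varepsilon<(1-b)\sqrt{(N-2)^{2}+4a}/\rho$) so that simultaneously $r<\tfrac{N}{1+\rho}$ and the integrability margins for $|x|^{-b}$ and $|x|^{-b-1}$ on $B$ stay strictly positive. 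This is pure bookkeeping, but it is the place where the strategy could fail; everything else is a routine combination of H\"older, Sobolev, Strichartz admissibility, and the equivalence of Sobolev spaces.
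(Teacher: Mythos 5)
Your proposal follows the paper's proof essentially verbatim: the same $B$/$B^{C}$ decomposition, the same Leibniz splitting into $|x|^{-b}|u|^{\alpha}\nabla u$ and $|x|^{-b-1}|u|^{\alpha+1}$, the identical $S$-admissible pair $(q,r)=\bigl(\tfrac{4(2-2b+\varepsilon)}{\varepsilon(N-2)},\tfrac{N(2-2b+\varepsilon)}{(1-b)N+\varepsilon}\bigr)$ on $B$ with $\theta_1=1/q^{*}$, the choice $r_1=\tfrac{N}{1-b+\varepsilon}$ and the embedding $H^1\hookrightarrow L^{\alpha r_1}$ on $B^{C}$ giving $\theta_2=\tfrac12$, and the same tuning $\varepsilon<(1-b)\sqrt{(N-2)^2+4a}/\rho$ to keep $r$ in the window where the equivalence of Sobolev spaces applies. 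The argument is correct and matches the paper's route in every essential step.
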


Now, with the previous lemmas in hand we are in a position to prove Theorem \ref{LWP}.
\begin{proof}[\bf{Proof of Theorem \ref{LWP}}]	
We use the contraction mapping principle. To do so, we define $$
X= C\left([0,T];H_a^1(\mathbb{R}^N)\right)\bigcap L^q\left([0,T];H_a^{1,r}(\mathbb{R}^N)\right),$$ where $(q,r)$ is any $S$-admissible pair and $T>0$ will be determined properly later. 
We shall show that 
\begin{equation}\label{OPERATOR}
G(u)(t)=e^{-it\mathcal{L}_a}u_0+i\lambda \int_0^t e^{-i(t-t')\mathcal{L}_a}|x|^{-b}|u(t')|^\alpha u(t')dt'
\end{equation}
is a contraction on the complete metric space $S(m,T)=\{u \in X : \|\langle \sqrt{\mathcal{L}_a} \rangle  u \|_{S^2(L^2;I)}\leq m \}
$
with the metric 
$$
d_T(u,v)=\|u-v\|_{S\left(L^2;I\right)},
$$
where $I=[0.T]$ and $\|\langle \sqrt{\mathcal{L}_a} \rangle  u \|_{S^2(L^2;I)}$ is defined in \eqref{strichartnorm}.

\ Let us first show that $G$ is well defined from $S(m,T)$ to $S(m,T)$. Indeed, it follows from the Strichartz inequalities in Lemma \ref{Strichartzpotential0} together with Lemmas \ref{Lemalocal} and \ref{Lemalocal1} that
\begin{equation}\label{NCD0} 
\begin{split} %NSD=norma sim derivada
\| \langle \sqrt{\mathcal{L}_a} \rangle G(u)\|_{S\left(L^2;I\right)}&\leq c\|\langle \sqrt{\mathcal{L}_a} \rangle u_0\|_{L^2}+ \left \| \langle \sqrt{\mathcal{L}_a} \rangle (|x|^{-b}|u|^\alpha u)\right \|_{L^{2}_IL_x^{\frac{2N}{N+2}}}\\
&\leq  c\|u_0\|_{H^1_a}+ c \left(T^{\theta_1}+T^{\theta_2}\right) \| \langle \sqrt{\mathcal{L}_a} \rangle u \|^{\alpha+1}_{S\left(L^2;I\right)},
\end{split}
\end{equation}
Consequently, by choosing $m=2c\|u_0\|_{H^1_a}$ and $T>0$ such that 
\begin{equation}\label{CTHs} %CTHs=condi do T Hs
c m^{\alpha}(T^{\theta_1}+T^{\theta_2} )< \frac{1}{4},
\end{equation}
we obtain $G(u)\in S(m,T)$. Hence, $G$ is well defined on $S(m,T)$.

\ To prove that $G$ is a contraction on $S(m,T)$ with respect to the metric $d_T$. As before and by \eqref{ID} we deduce 
\[
\begin{split}
d_T(G(u),G(v))&\leq   c\left\| |x|^{-b}(|u|^\alpha u-|v|^\alpha v \right)\|_{L^{2}_IL_x^{\frac{2N}{N+2}}}
\\ 
&\leq  c T^{\theta}\left(\|\sqrt{\mathcal{L}_a}u\|^\alpha_{S\left(L^2;I\right)}+\|\sqrt{\mathcal{L}_a}v\|^\alpha_{S\left(L^2;I\right)}\right)\|u-v\|_{S\left(L^2;I\right)}.
\end{split}
\]
So, if $u,v\in S(m,T)$, then $
 d_T(G(u),G(v))\leq c (T^{\theta_1}+T^{\theta_2})a^\alpha d_T(u,v).
$
Therefore, from \eqref{CTHs}, $G$ is a contraction on $S(m,T)$ and by the contraction mapping principle we have a unique fixed point $u \in S(m,T)$ of $G$. This completes the proof of the theorem.
\end{proof}

\subsection{\bf Small Global Theory}\label{secglo}

\ In this subsection, we turn our attention to prove the small data global results. Similarly as in the local theory, we establish suitable estimates on the nonlinearity\footnote{When $u=v$, we denote $F(x,u,v)$ by $F(x,u)$.} $F(x,u,v)=|x|^{-b}|u|^\alpha v$. It is worth mentioning that, since \eqref{SE5} holds for radial data, we obtain global results for radial initial data and also nonradial data. To this end, we use the norms $\|u\|_{S(\dot{H}^{s_c})}$ and $\|u\|_{\tilde{S}(\dot{H}^{s_c})}$, respectively.

\ We first obatin estimates for $a>0$ and in the sequel for $a<0$. For $a>0$, we use the results obtained by the second author \cite{CARLOS}. Recalling the numbers used in \cite{CARLOS}.
\begin{equation}\label{PHsA1}  
\widehat{q}=\frac{4\alpha(\alpha+2-\theta)}{\alpha(N\alpha+2b)-\theta(N\alpha-        4+2b)},\;\;\;\widehat{r}\;=\;\frac{N\alpha(\alpha+2-\theta)}{\alpha(N-b)-\theta(2-b)},
\end{equation}
and
\begin{equation}\label{PHsA2}
\widetilde{a}\;=\;\frac{2\alpha(\alpha+2-\theta)}{\alpha[N(\alpha+1-\theta)-2+2b]-(4-2b)(1-\theta)},\;\;\;  \widehat{a}=\frac{2\alpha(\alpha+2-\theta)}{4-2b-(N-2)\alpha}.
\end{equation}
It is easy to see that $(\widehat{q},\widehat{r})$ is $L^2$-admissible, $(\widehat{a},\widehat{r})$ is $\dot{H}^{s_c}$-admissible and $(\widetilde{a},\widehat{r})$ is $\dot{H}^{-s_c}$-admissible. %Moreover,
%\begin{equation}\label{LG1}
%\frac{1}{\widetilde{a}'}=\frac{\alpha-\theta}{\widehat{a}}+\frac{1}{\widehat{a}}\;\quad\;\textnormal{and}\quad \frac{1}{q'}=\frac{\alpha-\theta}{\widehat{a}}+\frac{1}{\widehat{q}}.   
%\end{equation}

\ The first lemma will be used to prove the global well posedness in the radial case.
\begin{lemma}\label{LG1}
Let $N\geq 3$, $\frac{4-2b}{N}<\alpha<\frac{4-2b}{N-2}$ and $0<b<\min\{\frac{N}{2},2\}$. If $a> 0$, then there exists $\theta\in (0,\alpha)$ sufficiently small such that 
\begin{itemize}
\item [(i)] $\left \|F(x,u,v) \right\|_{S'(\dot{H}^{-s_c})} \;\lesssim \; 
\| u\|^{\theta}_{L^\infty_tH^1_a}\|u\|^{\alpha-\theta}_{S(\dot{H}^{s_c})} \|v\|_{S(\dot{H}^{s_c})}$
\item [(ii)] $\left\|F(x,u,v)\right\|_{S'(L^2)}
%{L^{\widehat{q}'}_tL^{\widehat{r}'}_x}
\;\lesssim \; 
\| u\|^{\theta}_{L^\infty_tH^1_a}\|u\|^{\alpha-\theta}_{S(\dot{H}^{s_c})} \| v\|_{S(L^2)}
$
\item [(iii)] $\left\|\sqrt{\mathcal{L}_a} F(x,u)\right\|_{L^{\widehat{q}'}_tL^{\widehat{r}'}_x}
\;\lesssim\; \|u\|^{\theta}_{L^\infty_tH^1_a}\|u\|^{\alpha-\theta}_{S(\dot{H}^{s_c})} \| \sqrt{\mathcal{L}_a}  u\|_{S(L^2)}
$\;  if \;$N\geq 4$ 
\item [(iv)] $\left\|\sqrt{\mathcal{L}_a} F(x,u)\right\|_{L^{2}_tL_x^{\frac{6}{5}}}
\;\lesssim \; \| u\|^{\theta}_{L^\infty_tH^1_a}\|u\|^{\alpha-\theta}_{S(\dot{H}^{s_c})} \|\sqrt{\mathcal{L}_a}  u\|_{S(L^2)}$ \; if \;$N=3$\;\;and\;\;$\alpha<3-2b$.
\end{itemize}
\begin{proof} 
See \cite[Lemmas $4.1$ and $4.2$, with $s=1$]{CARLOS} to show (i) and (ii), respectively\footnote{To show (i), the pair used was ($\widetilde{a},\widehat{r})$ $\dot{H}^{-s_c}$-admissible.}. To show (iii) we used the estimate used in \cite[Lemma 4.3]{CARLOS} with $s=1$, i.e.,
$$
\left\|\nabla F(x,u)\right\|_{L^{\widehat{q}'}_tL^{\widehat{r}'}_x}
\lesssim \|u\|^{\theta}_{L^\infty_tH^1_a}\|u\|^{\alpha-\theta}_{L^{\widehat{a}}_tL^{\widehat{r}}_x} \| \nabla  u\|_{L^{\widehat{q}}_tL^{\widehat{r}}_x}.
$$
We notice that, if $N\geq 4$ then $\widehat{r}<N$ and $\widehat{r}'<N$, so the equivalence of Sobolev spaces (Remark \ref{equivalence} with $s=1$) implies (iii). Finally, we consider (iv). To this end, we use the following numbers\footnote{We use other admissible pairs since $\widehat{r}<N$ is not true for $N=3 $.}.
\begin{align}\label{pairN=3a}
q_\varepsilon\;=\;\frac{4}{1-2\varepsilon}\;,\;\;\;\;r_\varepsilon=\frac{3}{1+\varepsilon}
\end{align}
and
\begin{align}\label{pairN=3b}
a\;=\;\frac{8(\alpha-\theta)}{1+2\varepsilon}\;,\;\;\;\;\;\;\;r=\frac{6\alpha(\alpha-\theta)}{\alpha(3-2b-2\varepsilon)-2\theta (2-b)},
\end{align}
where $\varepsilon$ is small. Observe that $r_\varepsilon<3$ and the denominator of $r$ is positive if $b<\frac{3}{2}$. Moreover, an easy computation shows that $(a,r)$ is $\dot{H}^{s_c}$-admissible if\footnote{The condition $\alpha<3-2b$ implies that $r<6$, condition of $S(\dot{H}^{s_c})$-admissible pair, see \eqref{HsAdmissivel}.} $\alpha<3-2b$ and $(q_\varepsilon,r_\varepsilon)$ is $S$-admissible.

Let $E(t)=\left\|\nabla(|x|^{-b}|u|^\alpha u)\right\|_{L_x^{\frac{6}{5}}(A)}$, where $A$ denotes either $B$ or $B^C$. The H\"older inequality and the Sobolev embedding lead to
\[
\begin{split}\label{LG2R0}
E(t )&\leq  \||x|^{-b}\|_{L^\gamma(A)}\|u\|^\theta_{L_x^{\theta r_1}}  \|u\|^{\alpha-\theta}_{L_x^r}   \| \nabla u \|_{L_x^{r_\varepsilon}}  + \||x|^{-b-1}\|_{L^d(A)}\|u\|^\theta_{L_x^{\theta r_1}}\|u\|^{\alpha-\theta}_{L^r_x} \|u\|_{L_x^{e}}   \\
&\lesssim   \|u\|^\theta_{L_x^{\theta r_1}}  \|u\|^{\alpha-\theta}_{L_x^r} \| \nabla u \|_{L_x^{r_\varepsilon}} + \|u\|^\theta_{L_x^{\theta r_1}}\|u\|^{\alpha-\theta}_{L^r_x} \|\nabla u\|_{L_x^{r_\varepsilon}},
\end{split}
\]
where (using $1=\frac{3}{r_\varepsilon}-\frac{3}{e}$) 
\begin{equation} %CLHs1=conditions lemma Hs2
\begin{cases}
\frac{3}{\gamma}=\frac{3}{2}+1-\frac{3}{r_1}-\frac{3(\alpha-\theta)}{r}-\frac{3}{r_\varepsilon}\\ 
\frac{3}{d}=\frac{3}{2}+1-\frac{3}{r_1}-\frac{3(\alpha-\theta)}{r}-\frac{3}{e}=\frac{3}{2}+1-\frac{3}{r_1}-\frac{3(\alpha-\theta)}{r}-(\frac{3}{r_\varepsilon}-1),
\end{cases}
\end{equation}
which implies using the definition of the numbers $r$ and $r_\varepsilon$ (see \eqref{pairN=3a}-\eqref{pairN=3b}) that
\begin{equation}\label{RI}
\frac{3}{\gamma}-b=\frac{3}{d}-b-1=\frac{\theta(2-b)}{\alpha}-\frac{3}{r_1}.    
\end{equation}
If $A=B$ we  choose $\theta r_1=\frac{2N}{N-2}$, so that $\frac{N}{\gamma}-b=\theta(1-s_c)>0$ (recall that $s_c<1$). On the other hand, if $A=B^C$ we choose $\theta r_1=2$, so that $\frac{N}{\gamma}-b=-\theta s_c<0$. Thus, the quantities $\||x|^{-b}\|_{L^\gamma(A)}$, $\||x|^{-b-1}\|_{L^d(A)}<\infty$ and $H^1_x\hookrightarrow L^{\theta r_1}$. Therefore, since $\frac{1}{2}=\frac{\alpha-\theta}{a}+\frac{1}{q_\varepsilon}$ one has 
$$
\left\|\nabla F(x,u)\right\|_{L^{2}_tL^{\frac{6}{5}}_x}
\lesssim  \|u\|^{\theta}_{L^\infty_tH^1_x}\|u\|^{\alpha-\theta}_{L^{a}_tL^{r}_x} \| \nabla  u\|_{L^{q_\varepsilon}_tL^{r_\varepsilon}_x}.
$$
Hence, applying the equivalence of Sobolev spaces (since $r_\epsilon<3$) we conclude with the proof of (iv).   
\end{proof}
\end{lemma}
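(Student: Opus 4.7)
The plan is to decompose space into the unit ball $B$ and its complement $B^C$ so that the singular weight $|x|^{-b}$ can be absorbed into an $L^\gamma$ norm on each piece (using \eqref{xb}). Throughout we use the hypothesis $a>0$, which gives $\|u\|_{H^{s,r}_a}\sim \|u\|_{H^{s,r}}$ whenever $1<r<N/s$ (Remark \ref{equivalence}), so that all computations can be done in the classical Sobolev spaces and then transferred to the $\mathcal{L}_a$-adapted ones.

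For parts (i) and (ii), I would follow the same template as in the author's earlier work \cite{CARLOS}: split $|u|^\alpha=|u|^\theta|u|^{\alpha-\theta}$ with $\theta\in(0,\alpha)$ small, place $|u|^\theta$ into an $L^\infty_t L^{\theta r_1}_x$ norm (absorbed by $L^\infty_t H^1$ via Sobolev, once $\theta r_1$ is chosen in the admissible range), place $|u|^{\alpha-\theta}$ into the $\dot{H}^{s_c}$-admissible pair $(\widetilde a,\widehat r)$ or $(\widehat a,\widehat r)$, and the remaining factor into either $S(\dot{H}^{s_c})$ (for part (i), dualising via the $\dot{H}^{-s_c}$-admissible pair $(\widetilde a,\widehat r)$) or $S(L^2)$ (for part (ii), via $(\widehat q,\widehat r)$). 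The bookkeeping is exactly \cite[Lemmas 4.1--4.2]{CARLOS} specialised to $s=1$.

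For (iii), the derivative of $F(x,u)$ produces the two terms $|x|^{-b-1}|u|^{\alpha}u$ and $|x|^{-b}|u|^\alpha\nabla u$, both of which I would estimate on $B$ and $B^C$ by Hölder with the same exponent distribution as above, placing $\nabla u$ in $L^{\widehat q}_t L^{\widehat r}_x$. The equivalence of Sobolev norms requires $\widehat r<N$ and $\widehat r{}'<N$; a direct check using \eqref{PHsA1} shows this is fine in dimension $N\ge 4$. This is where the dimensional restriction enters.

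Part (iv) is the delicate one, because for $N=3$ one has $\widehat r\ge 3$ and the above equivalence fails. The plan is to replace $(\widehat q,\widehat r)$ by a different $S$-admissible pair $(q_\varepsilon,r_\varepsilon)=(4/(1-2\varepsilon),\,3/(1+\varepsilon))$ with $r_\varepsilon<3$, and to match it with an auxiliary $\dot{H}^{s_c}$-admissible pair $(a,r)$ carrying $|u|^{\alpha-\theta}$, as in \eqref{pairN=3a}--\eqref{pairN=3b}. One then has to verify simultaneously that (a) $b<3/2$ keeps the denominator of $r$ positive, (b) $\alpha<3-2b$ keeps $r<6$ so that $(a,r)$ is genuinely $\dot{H}^{s_c}$-admissible, and (c) the choices $\theta r_1=2N/(N-2)$ on $B$ and $\theta r_1=2$ on $B^C$ make the resulting $\gamma$ and $d$ satisfy $\|\,|x|^{-b}\|_{L^\gamma(A)}+\|\,|x|^{-b-1}\|_{L^d(A)}<\infty$ via \eqref{xb}; a short scaling identity analogous to \eqref{RI} shows these are two symmetric conditions $\theta(1-s_c)>0$ on $B$ and $-\theta s_c<0$ on $B^C$. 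The main obstacle is precisely this bookkeeping in (iv): one must thread the needle between $r_\varepsilon<3$ (needed for Sobolev equivalence on the derivative factor), $r<6$ (needed for $\dot{H}^{s_c}$-admissibility), and the Hölder balance in time $\tfrac12=\tfrac{\alpha-\theta}{a}+\tfrac{1}{q_\varepsilon}$, all simultaneously. Once these are arranged, Hölder plus the equivalence of Sobolev spaces on the $r_\varepsilon$ factor closes the estimate.
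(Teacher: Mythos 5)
Your proposal is correct and follows essentially the same route as the paper: parts (i)--(iii) are delegated to the $s=1$ cases of Lemmas 4.1--4.3 of \cite{CARLOS} with the equivalence of Sobolev spaces (valid since $a>0$ and $\widehat r,\widehat r\,'<N$ for $N\ge 4$), and part (iv) is handled exactly as in the paper by swapping $(\widehat q,\widehat r)$ for the pair $(q_\varepsilon,r_\varepsilon)$ with $r_\varepsilon<3$, pairing it with the auxiliary $\dot H^{s_c}$-admissible pair $(a,r)$ from \eqref{pairN=3a}--\eqref{pairN=3b}, and making the same choices $\theta r_1=\tfrac{2N}{N-2}$ on $B$ and $\theta r_1=2$ on $B^C$ to render the weights integrable via \eqref{RI}.
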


\ Our goal now is to show the radial small data result (Theorem \ref{GWP1}). 

\begin{proof}[\bf{Proof of Theorem \ref{GWP1}}]
As usual, our proof is based on the contraction mapping principle. Indeed, define
$$
S=\{ u:\;\|u\|_{S(\dot{H}^{s_c})}\leq 2\|e^{-it\mathcal{L}_a}u_0\|_{S(\dot{H}^{s_c})}\quad\textnormal{and}\quad\|\langle \sqrt{\mathcal{L}_a}\rangle u\|_{S(L^2)}\leq 2c\|u_0\|_{H^1_a}\}.
$$
We shall show that $G=G_{u_0}$ defined in \eqref{OPERATOR} is a contraction on $S$ equipped with the metric 
$$
d(u,v)=\|u-v\|_{S(L^2)}+\|u-v\|_{S(\dot{H}^{s_c})}.
$$

\ Indeed, we deduce by the Strichartz inequalities (\ref{SE1}), (\ref{SE2}), \eqref{SE3} and \eqref{SE5}
\begin{equation}
\|G(u)\|_{S(\dot{H}^{s_c})}\leq \|e^{-it\mathcal{L}_a}u_0\|_{S(\dot{H}^{s_c})}+ c\| F(x,u) \|_{S'(\dot{H}^{-s_c})}
\end{equation}
\begin{equation}
\|G(u)\|_{S(L^2)}\leq c\|u_0\|_{L^2}+ c\| F(x,u)\|_{S'(L^2)}
\end{equation}
and	
\begin{equation}
\|\sqrt{\mathcal{L}_a} G(u)\|_{S(L^2)}\leq c \|\sqrt{\mathcal{L}_a} u_0\|_{L^2}+ c\|\sqrt{\mathcal{L}_a} F(x,u)\|_{S'(L^2)},
\end{equation}
where $F(x,u)=|x|^{-b}|u|^\alpha u$. On the other hand, it follows from Lemma \ref{LG1} and the three last inequalities that, for $u\in S$
\begin{align*}
\|G(u)\|_{S(\dot{H}^{s_c})}\leq& \|e^{-it\mathcal{L}_a}  u_0\|_{S(\dot{H}^{s_c})} +c\| u \|^\theta_{L^\infty_tH^1_a}\| u \|^{\alpha-\theta}_{S(\dot{H}^{s_c})}\| u \|_{S(\dot{H}^{s_c})} \nonumber \\
\leq & \|e^{-it\mathcal{L}_a} u_0\|_{S(\dot{H}^{s_c})}+2^{\alpha+1}c^{\theta+1} M^\theta \| e^{-it\mathcal{L}_a} u_0 \|^{\alpha-\theta+1}_{S(\dot{H}^{s_c})}
\end{align*}
and
\begin{equation}
\begin{split}
\|\langle \sqrt{\mathcal{L}_a}\rangle G(u)\|_{S(L^2)} &\leq  c\|u_0\|_{H^1_a}+c\|u\|^\theta_{L^\infty_tH^1_a}\| u \|^{\alpha-\theta}_{S(\dot{H}^{s_c})} \|\langle \sqrt{\mathcal{L}_a}\rangle u\|_{S(L^2)}\\
& \leq  c\|u_0\|_{H^1_a}+c^{\theta+2}2^{\alpha+1} M^{\theta}\|e^{-it\mathcal{L}_a}  u_0 \|^{\alpha-\theta}_{S(\dot{H}^{s_c})}\|u_0\|_{H^1_a}.
\end{split}
\end{equation}

\ Now if $\| U(t)u_0 \|_{S(\dot{H}^{s_c})}<\delta$ with 
\begin{equation}\label{WD}
\delta\leq \min\sqrt[\alpha-\theta]{\frac{1}{2c^{\theta+1}2^{\alpha+1}M^\theta}},
\end{equation}
where $A>0$ is a number such that $\|u_0\|_{H^s}\leq A$, then
$$\|G(u)\|_{\widetilde{S}(\dot{H}^{s_c})}\leq 2\| e^{-it\mathcal{L}_a}u_0 \|_{\widetilde{S}(\dot{H}^{s_c})}\quad \mbox{and}\quad \|\langle \sqrt{\mathcal{L}_a}\rangle G(u) \|_{S(L^2)}\leq 2c\|u_0\|_{H^1_a},$$
that is  $G(u)\in S$.

\ To prove that $G$ is a contraction on $S$, we repeat the above computations. Indeed, taking $u,v\in S$ 
\begin{equation}
\|G(u)-G(v)\|_{S(\dot{H}^{s_c})}\leq 2^{\alpha+1}c^{\theta+1} M^\theta\|e^{-it\mathcal{L}_a}u_0 \|^{\alpha-\theta}_{B(\dot{H}^{s_c})} \|u-v\|_{S(\dot{H}^{s_c})}
\end{equation}
and
\begin{equation}
\|G(u)-G(v)\|_{S(L^2)}\leq 2^{\alpha+1}c^{\theta+1} M^\theta\|e^{-it\mathcal{L}_a}u_0 \|^{\alpha-\theta}_{B(\dot{H}^{s_c})} \|u-v\|_{S(L^2)}.
\end{equation}
By using the last inequalities and \eqref{WD} we obtain
$$
d(G(u),G(v)) \leq 2^{\alpha+1}c^{\theta+1} M^\theta\|e^{-it\mathcal{L}_a}u_0 \|^{\alpha-\theta}_{B(\dot{H}^{s_c})} \;d(u,v)\leq \frac{1}{2}d(u,v),
$$
i.e., $G$ is a contraction.

\ Therefore, by the Banach Fixed Point Theorem, $G$ has a unique fixed point $u\in S$, which is a global solution of \eqref{INLSa}.
\end{proof}
\begin{remark}
Observe that Theorem \ref{LWP} (when $N=3$) shows global well posedness with an extra condition on $\alpha$, i.e., $\alpha<3-2b$. We now establish suitable estimates on $F(x,u,v)$ for the full intercritical regime, that is, $\frac{4-2b}{3}<\alpha<4-2b$.
\end{remark}

\ Before stating the lemma, we first define the following numbers
\begin{align}\label{LG2N=3a}
\bar{a}=
\frac{4(\alpha-\theta)}{1+2\varepsilon}
\qquad \bar{r}=
\frac{6\alpha(\alpha-\theta)}{\alpha(3-2b-2\varepsilon)-\theta (4-2b)}
\end{align}
and
\begin{align}\label{LG2N=2b}
q=\frac{4}{1-2\varepsilon}\qquad r=\frac{3}{1+\varepsilon}.
\end{align}
Observe that $(q,r$) is $S$-admissible and $r<N$. Moreover, $\frac{1}{2}=\frac{\alpha-\theta}{\bar{a}}+\frac{1}{q}$.
\begin{lemma}\label{LG2} Let $N=3$ and $a> 0$. If $0<b<\frac{3}{2}$ and $\max\{\frac{4-2b}{3},1\}< \alpha<4-2b$, then the following statements hold  
\begin{itemize}
\item [(i)] $
\left\| F(x,u,v)\right\|_{L^{2}_tL_x^{\frac{6}{5}}}\;\lesssim\;
\| u\|^{\theta}_{L^\infty_tH^1_a}\|u\|^{\alpha-\theta}_{L^{\bar{a}}_tL^{\bar{r}}_x} \|v\|_{S(L^2)}$%{L^{q_\varepsilon}_tL^{r\varepsilon}_x}$ 
\item [(ii)] $
\left\|\sqrt{\mathcal{L}_a} F(x,u)\right\|_{L^{2}_tL_x^{\frac{6}{5}}}\;\lesssim\;
\| u\|^{\theta}_{L^\infty_tH^1_a}\|u\|^{\alpha-\theta}_{L^{\bar{a}}_tL^{\bar{r}}_x} \|\sqrt{\mathcal{L}_a}   u\|_{S(L^2)}$ 
\item [(iii)] $
\left\|\sqrt{\mathcal{L}_a}^{s_c} F(x,u)\right\|_{L^{2}_tL_x^{\frac{6}{5}}} \;\lesssim\;
\| u\|^{\theta}_{L^\infty_tH^1_a}\|u\|^{\alpha-\theta}_{L^{\bar{a}}_tL^{\bar{r}}_x} \|\langle\sqrt{\mathcal{L}_a}  \rangle u\|_{S(L^2)}$, 
\end{itemize}
where $\theta$ is sufficiently small.
\begin{proof}
Let us prove (i). Similarly as in Lemma \ref{LG1} (iv), it follows that (since $\frac{1}{2}=\frac{\alpha-\theta}{\bar{a}}+\frac{1}{q}$)
\[
\begin{split}%\label{LG2R0}
\left\|F(x,u,v)\right\|_{L^2_tL_x^{\frac{6}{5}}(A)}&\leq  \||x|^{-b}\|_{L^\gamma(A)}\|u\|^\theta_{L^\infty_tL_x^{\theta r_1}}  \|u\|^{\alpha-\theta}_{L_t^{\bar{a}}L_x^{\bar{r}}}   \| v \|_{L^{q}_tL_x^{r}},
\end{split}
\]
where $A$ denotes either $B$ or $B^C$. Moreover $\frac{3}{\gamma}=\frac{5}{2}-\frac{3}{r_1}-\frac{3(\alpha-\theta)}{\bar{r}}-\frac{3}{r}$. From \eqref{LG2N=3a}, \eqref{LG2N=2b} we have
$$
\frac{3}{\gamma}-b=\frac{\theta(2-b)}{\alpha}-\frac{3}{r_1},
$$
which is the same relation as in \eqref{RI}, so choosing $r_1$ as in Lemma \ref{LG1} (iv) we deduce that $|x|^{-b}\in L^\gamma(A)$ and $H^1_x\hookrightarrow L^{\theta r_1}$. Hence, in view of 
($q,r$) is $S$-admissible we obatin (i). We now consider (ii). The equivalence of Sobolev spaces implies that
\begin{equation}\label{LG2ii1}
\left\|\sqrt{\mathcal{L}_a} (|x|^{-b}|u|^\alpha u)\right\|_{L_t^{2}L^{\frac{6}{5}}_x}\leq \left\|\nabla (|x|^{-b}|u|^\alpha u)\right\|_{L_t^{2}L^{\frac{6}{5}}_x}.    
\end{equation}
The derivate of $F$ leads to two terms, one of the form $|x|
^{-b}|u|^\alpha
\nabla u$ and one of the form $|x|
^{-b-1}|u|^\alpha u$, so for  estimating the  first  one we replace $v$ by $\nabla u$ in (i). Thus, 
\begin{eqnarray*}
\left\| |x|^{-b}|u|^\alpha \nabla u\right\|_{L^{2}_tL^{\frac{6}{5}}_x}
&\lesssim & \|u\|^{\theta}_{L^\infty_tH^1_a}\|u\|^{\alpha-\theta}_{L^{\bar{a}}_tL^{\bar{r}}_x} \|  \nabla u\|_{L^{q}_tL^{r}_x}.%\\
%&\lesssim &\|u\|^{\theta}_{L^\infty_tH^1_a}\|u\|^{\alpha-\theta}_{L^{\bar{a}}_tL^{\bar{r}}_x} \|  \sqrt{\mathcal{L}_a} u\|_{L^{q*}_tL^{r*}_x},
\end{eqnarray*}
%where we have used the equivalence of Sobolev spaces (since $r*<N$). 
Furthermore, as in Lemma \ref{LG1} (iv) we get 
\[
\begin{split}
\left\||x|^{-b-1}|u|^\alpha u\right\|_{L_t^{2}L^{\frac{6}{5}}_x(A)} &\leq  \left\|\||x|^{-b-1}\|_{L^d(A)}\|u\|_{L_{x}^{\theta r_1}}^{\theta} \|u\|^{\alpha-\theta}_{L_{x}^{(\alpha-\theta)r_2}}\|u\|_{L^{r_3}_x}\right\|_{L^{2}_t}\\
& \lesssim   \|u\|_{L^\infty_t H_{x}^{1}}^{\theta}\|u\|^{\alpha-\theta}_{L^{\bar{a}}_tL^{\bar{r}}_x} \|\nabla u \|_{L^{q}_tL^{r}_x},
\end{split}
\] 
where $
\frac{3}{d}-b-1=\frac{\theta(2-b)}{\alpha}-\frac{3}{r_1}
$. Again the last relation is the same relation as in \eqref{RI}. Therefore, applying the equivalence of Sobolev spaces (since $r<3$) together with \eqref{LG2ii1}, one has
$$\left\|\sqrt{\mathcal{L}_a} (|x|^{-b}|u|^\alpha u)\right\|_{L_t^{2}L^{\frac{6}{5}}_x}\lesssim   \|u\|_{L^\infty_t H_{a}^{1}}^{\theta}\|u\|^{\alpha-\theta}_{L^{\bar{a}}_tL^{\bar{r}}_x} \|\sqrt{\mathcal{L}_a} u \|_{L^{q}_tL^{r}_x}.
$$
Finally, we show (iii). Indeed, the interpolation inequality and (i)-(ii) imply that
\[
\begin{split}
\left\| \sqrt{\mathcal{L}_a}^{s_c}F(x,u)\right\|_{L^{2}_t L^{\frac{6}{5}}_x}&\lesssim \left\| F(x,u) \right\|^{1-s_c}_{L^{2}_t L^{\frac{6}{5}}_x}
\left\|\sqrt{\mathcal{L}_a} F(x,u)\right\|^{s_c}_{L^{2}_t L^{\frac{6}{5}}_x}\\
& \lesssim \| u\|^{\theta}_{L^\infty_tH^1_a}\|u\|^{\alpha-\theta}_{L^{\bar{a}}_tL^{\bar{r}}_x}\| u\|^{1-s_c}_{S(L^2)}\| \sqrt{\mathcal{L}_a} u\|^{s_c}_{S(L^2)}. 
\end{split}
\]
This completes the proof of the lemma.
\end{proof}
\end{lemma}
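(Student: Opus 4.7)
The plan is to mirror the strategy of Lemma~\ref{LG1}(iv) but with the tailored $N=3$ pairs $(\bar{a},\bar{r})$ and $(q,r)$ from \eqref{LG2N=3a}--\eqref{LG2N=2b}. Two features make them workable: $(q,r)$ is $S$-admissible with $r<3$, so the equivalence of Sobolev spaces (Remark~\ref{equivalence}) lets us freely interchange $\nabla$ and $\sqrt{\mathcal{L}_a}$; and $(\bar a,\bar r)$ satisfies the $\dot H^{s_c}$-admissibility relation $\tfrac{2}{\bar a}=\tfrac{N}{2}-\tfrac{N}{\bar r}-s_c$ together with the time balance $\tfrac{1}{2}=\tfrac{\alpha-\theta}{\bar a}+\tfrac{1}{q}$. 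The extension $\alpha<4-2b$ (instead of $\alpha<3-2b$ in Lemma~\ref{LG1}) is absorbed by permitting $\bar r\geq 6$, which is exactly why the output appears in the $\widetilde S$-norm.

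For part (i), I would split the spatial integral over the unit ball $B$ and its complement $B^C$ and apply H\"older in the factorization $|x|^{-b}\cdot|u|^{\theta}\cdot|u|^{\alpha-\theta}\cdot v$, placing $|x|^{-b}\in L^{\gamma}_x$, $|u|^{\theta}\in L^{\theta r_1}_x$, $|u|^{\alpha-\theta}\in L^{\bar r}_x$ and $v\in L^{r}_x$. A direct count of the exponents using \eqref{LG2N=3a}--\eqref{LG2N=2b} yields the same relation
\begin{equation}
\tfrac{3}{\gamma}-b=\tfrac{\theta(2-b)}{\alpha}-\tfrac{3}{r_1}
\end{equation}
already seen in \eqref{RI}. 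Choosing $\theta r_1=\tfrac{2N}{N-2}$ on $B$ makes $\tfrac{3}{\gamma}-b>0$, and $\theta r_1=2$ on $B^{C}$ makes $\tfrac{3}{\gamma}-b<0$, so $|x|^{-b}\in L^{\gamma}$ in each region by \eqref{xb}, while $H^1_x\hookrightarrow L^{\theta r_1}$ in both. The time integration then closes by H\"older and the choice of $(q,r)$.

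For part (ii), I would first use the equivalence of Sobolev spaces to replace $\sqrt{\mathcal{L}_a}$ by $\nabla$ on the left-hand side (valid because $\tfrac{6}{5}<3$ and $a>0$), then apply the product rule. This splits $\nabla F(x,u)$ into the term $|x|^{-b}|u|^{\alpha}\nabla u$, which reduces to (i) with $v=\nabla u$ and is converted back to $\sqrt{\mathcal{L}_a} u$ via $r<3$, and the term $|x|^{-b-1}|u|^{\alpha}u$, which is estimated by the same H\"older factorization with $|x|^{-b-1}\in L^{d}_x$ and the matching relation $\tfrac{3}{d}-b-1=\tfrac{\theta(2-b)}{\alpha}-\tfrac{3}{r_1}$; the same choice of $r_1$ on $B$ and $B^{C}$ works in both. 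Part (iii) follows by complex interpolation of the fractional power:
\begin{equation}
\bigl\|\sqrt{\mathcal{L}_a}^{\,s_c}F(x,u)\bigr\|_{L^2_tL^{6/5}_x}\lesssim\|F(x,u)\|_{L^2_tL^{6/5}_x}^{\,1-s_c}\bigl\|\sqrt{\mathcal{L}_a}F(x,u)\bigr\|_{L^2_tL^{6/5}_x}^{\,s_c},
\end{equation}
combined with (i) specialized to $v=u$ and (ii), absorbing the resulting $\|u\|_{S(L^2)}^{1-s_c}\|\sqrt{\mathcal{L}_a}u\|_{S(L^2)}^{s_c}$ into $\|\langle\sqrt{\mathcal{L}_a}\rangle u\|_{S(L^2)}$.

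The main obstacle is the exponent bookkeeping in dimension three: unlike in Lemma~\ref{LG1}(iii), the condition $\widehat r<N=3$ fails, and one must simultaneously arrange that $(q,r)$ is $S$-admissible with $r<3$ (to deploy the equivalence of Sobolev spaces), that $H^1\hookrightarrow L^{\theta r_1}$ with $\theta$ small enough on both $B$ and $B^C$, and that $\bar r\geq 2$ in the enlarged intercritical range. The extra hypothesis $\alpha>1$ enters precisely in controlling $|x|^{-b-1}|u|^{\alpha}u$, since otherwise the factor $|u|^{\alpha-\theta}$ cannot be placed in a Lebesgue space compatible with the $\bar r$ produced by \eqref{LG2N=3a}.
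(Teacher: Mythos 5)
Your proposal follows essentially the same route as the paper's proof: the same $B$/$B^C$ splitting and H\"older factorization leading to the relation $\tfrac{3}{\gamma}-b=\tfrac{\theta(2-b)}{\alpha}-\tfrac{3}{r_1}$ with the same choices of $\theta r_1$, the same product-rule decomposition and Sobolev-space equivalence (using $r<3$) for part (ii), and the same interpolation for part (iii). The only quibble is your closing remark attributing the hypothesis $\alpha>1$ to the estimate of $|x|^{-b-1}|u|^{\alpha}u$; in the paper that condition is actually needed later, in the contraction argument of Theorem \ref{GWPN=3} (to define $\delta$ via the exponent $\alpha-1-\theta$), not in the lemma's H\"older bookkeeping.
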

\begin{remark}\label{important*}
Note that, in Lemma \ref{LG2} the pair ($\bar{a},\bar{r})$ is not $S(\dot{H}^{s_c})$-admissible due to $\bar{r}<6$ not being true for $\alpha\geq 3-2b$. However, we obtain a small data global result assuming $\|e^{-it\mathcal{L}_a}u_0\|_{L^{\bar{a}}_tL^{\bar{r}}_x  }$ sufficiently  small. For the proof of this result (Theorem \ref{GWPN=3}) we do not use the Strichartz estimate  
\eqref{SE5} in view of $(\bar{a},\bar{r})$ not being $\dot{H}^{s_c}$-admissible. Instead, use the Sobolev embedding and apply the Strichartz estimate \eqref{SE3}. This is possible since the $S$-admissible pair used satisfies the conditions \eqref{CPA0}. It is also worth mentioning that, since we do not use \eqref{SE5} then the result holds for nonradial data. 
\end{remark}

\begin{proof}[\bf{Proof of Theorem \ref{GWPN=3}}]
We only show (i) since (ii) and (ii) are immediate consequences. As before, define 
$$
S=\{ u:\;\|u\|_{\tilde{S}(\dot{H}^{s_c})}\leq 2\|e^{-it\mathcal{L}_a}u_0\|_{\tilde{S}(\dot{H}^{s_c})}\quad\textnormal{and}\quad\|\langle \sqrt{\mathcal{L}_a}\rangle u\|_{S(L^2)}\leq 2c\|u_0\|_{H^1_a}\}.
$$
We prove that $G=G_{u_0}$ defined in \eqref{OPERATOR} is a contraction on $S$ equipped with the metric 
$$
d(u,v)=\|u-v\|_{S(L^2)}.%+\|u-v\|_{\widetilde{S}(\dot{H}^{s_c})}.
$$
Combining the Sobolev embedding, equivalence of Sobolev spaces and Strichartz estimate \eqref{SE3}, it follows that
\begin{eqnarray*}
\|G(u)\|_{\tilde{S}(\dot{H}^{s_c})}&\leq& \|e^{-it\mathcal{L}_a} u_0\|_{\tilde{S}(\dot{H}^{s_c})}+c\left\|D^{s_c}\int_0^t e^{-i(t-s)\mathcal{L}_a} F(x,u)ds\right\|_{L_t^{\bar{a}}L_x^{\bar{p}}}\\
&\leq&  \|e^{-it\mathcal{L}_a} u_0\|_{\tilde{S}(\dot{H}^{s_c})}+c\left\|\sqrt{\mathcal{L}_a}^{s_c} F(x,u)\right\|_{L^{2}_tL_x^{\frac{2N}{N+2}}},
\end{eqnarray*}
where\footnote{It is easy to see that $2<\bar{p}<\frac{3}{s_c}$ and since $\frac{4-2b}{3}<\alpha<4-2b$ we have that $(\bar{a},\bar{p})$ is $S$-admissible.} 
\begin{equation}\label{pbarra1}
 \bar{p}=\frac{6\alpha(\alpha-\theta)}{\alpha(3-2b-2\varepsilon)+2\alpha s_c(\alpha-\theta)-\theta(4-2
b)}.   
\end{equation} 
In addition,
\begin{equation}\label{GHs2}
\|G(u)\|_{S(L^2)}\leq c\|u_0\|_{L^2}+ c\| F(x,u) \|_{L^{2}_tL_x^{\frac{2N}{N+2}}}
\end{equation}
and	
\begin{equation}\label{GHs3}
\|\sqrt{\mathcal{L}_a} G(u)\|_{S(L^2)}\leq c \|\sqrt{\mathcal{L}_a} u_0\|_{L^2}+ c\|\sqrt{\mathcal{L}_a} F(x,u)\|_{L^{2}_tL_x^{\frac{2N}{N+2}}}.
\end{equation}
An application of Lemma \ref{LG2} together with the last inequalities yield, for any $u\in S$,
\begin{eqnarray*}
\|G(u)\|_{\tilde{S}(\dot{H}^{s_c})} & \leq & \|e^{-it\mathcal{L}_a} u_0\|_{\tilde{S}(\dot{H}^{s_c})}+c\|u\|^\theta_{L^\infty_tH^1_a}\| u \|^{\alpha-\theta}_{\widetilde{S}(\dot{H}^{s_c})} \|\langle \sqrt{\mathcal{L}_a}\rangle u\|_{S(L^2)}\\
& \leq & \|e^{-it\sqrt{\mathcal{L}_a}} u_0\|_{\tilde{S}(\dot{H}^{s_c})}+c^{\theta+2}2^{\alpha+1} M^{\theta+1}\|e^{-it\mathcal{L}_a}  u_0 \|^{\alpha-\theta}_{\widetilde{S}(\dot{H}^{s_c})}
\end{eqnarray*}
and
\begin{equation}%\label{TGHS11}
\begin{split}
\|\langle \sqrt{\mathcal{L}_a}\rangle G(u)\|_{S(L^2)} &\leq  c\|u_0\|_{H^1_a}+c\|u\|^\theta_{L^\infty_tH^1_a}\| u \|^{\alpha-\theta}_{\widetilde{S}(\dot{H}^{s_c})} \|\langle \sqrt{\mathcal{L}_a}\rangle u\|_{S(L^2)}\\
& \leq  c\|u_0\|_{H^1_a}+c^{\theta+2}2^{\alpha+1} M^{\theta}\|e^{-it\mathcal{L}_a}  u_0 \|^{\alpha-\theta}_{\widetilde{S}(\dot{H}^{s_c})}\|u_0\|_{H^1_a}.
\end{split}
\end{equation}

\ If $\| e^{it\mathcal{L}_a}u_0 \|_{\widetilde{S}(\dot{H}^{s_c})}<\delta$ with\footnote{Note that, to define $\delta$, we need the condition $\alpha>1$.} (since $\alpha>1$ and $\theta$ small) 
\begin{equation}\label{deltasd}
\delta\leq \min\left\{\sqrt[\alpha-1-\theta]{\frac{1}{2c^{\theta+2}2^{\alpha+1}M^{\theta+1}}}     , \sqrt[\alpha-\theta]{ \frac{1}{2c^{\theta+1}2^{\alpha+1}M^\theta}}\right\},
\end{equation}
implies that
$$\|G(u)\|_{\widetilde{S}(\dot{H}^{s_c})}\leq 2\| e^{-it\mathcal{L}_a}u_0 \|_{\widetilde{S}(\dot{H}^{s_c})}\quad \mbox{and}\quad \|\langle \sqrt{\mathcal{L}_a}\rangle G(u) \|_{S(L^2)}\leq 2c\|u_0\|_{H^1_a}.$$
therefore, $G(u)\in S$. 

\ To show that $G$ is a contraction on $S$, we repeat the above computations. Indeed, let $u,v\in S$ one has
\begin{equation}\label{C2GH1}
\|G(u)-G(v)\|_{S(L^2)}\leq 2^{\alpha+1}c^{\theta+1} M^\theta\|e^{-it\mathcal{L}_a}u_0 \|^{\alpha-\theta}_{B(\dot{H}^{s_c})} \|u-v\|_{S(L^2)}.
\end{equation}
By using the last inequalities and \eqref{deltasd} we obtain
$$
d(G(u),G(v)) \leq 2^{\alpha+1}c^{\theta+1} M^\theta\|e^{-it\mathcal{L}_a}u_0 \|^{\alpha-\theta}_{B(\dot{H}^{s_c})} \;d(u,v)\leq \frac{1}{2}d(u,v),
$$
which implies that  $G$ is also a contraction. Therefore, by the contraction mapping principle, $G$ has a unique fixed point $u\in S$.
\end{proof}

\ We end this section with the proof of Theorem \ref{GWP2}. To this end, we establish good estimates on the nonlinearity, for negative values of $a$.
\begin{lemma}\label{LG3} %Let $N=\{3,4,5\}$, $0<b<\min\{\frac{N}{2},2\}$ and $a> 0$. If $\frac{4-2b}{N}< \alpha<\frac{4-2b}{N-2}$, then the following statements hold  
Let $0<b<\frac{6-N}{2}$. Assume that $(N,a,\alpha)$ satisfy \begin{equation}\label{LGa<0}
\begin{cases} a>-\frac{(N-2)^2}{4}\;\;\;\;\qquad \qquad \qquad \qquad\ \textnormal{if}\;\;N=3,\;\;\;\frac{4-2b}{3}<\alpha\leq\; 2-2b \quad \textnormal{and} \;\; 0\leq b<\frac{1}{2},\\
a>-\frac{(N-2)^2}{4}+\left(\frac{\alpha(N-2)-(2-2b)}{2(\alpha+1)}\right)^2\;\;\textnormal{if}\;\;3\leq N\leq 5,\;\;\; \max\{\frac{4-2b}{N},\frac{2-2b}{N-2},1\}<\alpha< \frac{4-2b}{N-2}.
\end{cases}
\end{equation}
Then the following statements hold
\begin{itemize}
\item [(i)] $
\left\| F(x,u,v)\right\|_{L^{2}_tL_x^{\frac{2N}{N+2}}}\;\lesssim\;
\| u\|^{\theta}_{L^\infty_tH^1_a}\|u\|^{\alpha-\theta}_{L^{\bar{a}}_tL^{\bar{r}}_x} \|v\|_{S(L^2)}$%{L^{q_\varepsilon}_tL^{r\varepsilon}_x}$ 
\item [(ii)] $
\left\|\sqrt{\mathcal{L}_a} F(x,u)\right\|_{L^{2}_tL_x^{\frac{2N}{N+2}}}\;\lesssim\;
\| u\|^{\theta}_{L^\infty_tH^1_a}\|u\|^{\alpha-\theta}_{L^{\bar{a}}_tL^{\bar{r}}_x} \|\sqrt{\mathcal{L}_a}   u\|_{S(L^2)}$ 
\item [(iii)] $
\left\|\sqrt{\mathcal{L}_a}^{s_c} F(x,u)\right\|_{L^{2}_tL_x^{\frac{2N}{N+2}}} \;\lesssim\;
\| u\|^{\theta}_{L^\infty_tH^1_a}\|u\|^{\alpha-\theta}_{L^{\bar{a}}_tL^{\bar{r}}_x} \|\langle\sqrt{\mathcal{L}_a}  \rangle u\|_{S(L^2)}$, 
\end{itemize}
where $\theta$ is sufficiently small.
\begin{proof} In view of \eqref{LGa<0} we have two cases:\\
{\bf Case $1$.} For $N=3$, $\frac{4-2b}{N}<\alpha\leq 2-2b$ and $0\leq b<\frac{1}{2}$, we define
\begin{equation}
\bar{a}=\frac{2(\alpha-\theta)}{1-\theta}\;,\quad\; \bar{r}=\frac{3\alpha(\alpha-\theta)}{\alpha(1-b)-\theta(2-b-\alpha)}\;,\quad \;q=\frac{2}{\theta}\;\quad \textnormal{and}\;\quad r=\frac{6}{3-2\theta}.
\end{equation}
{\bf Case $2$.} For $3\leq N\leq 5$ and  $\max\{\frac{2-2b}{N-2}, \frac{4-2b}{N},1\}<\alpha< \frac{4-2b}{N-2}$, define
\begin{equation}
\bar{a}=\frac{4(\alpha+1)(\alpha-\theta)}{4-2b-\alpha(N-4)}\qquad \quad \bar{r}=\frac{2N\alpha(\alpha+1)(\alpha-\theta)}{\alpha^2(N-2b)-\theta(4-2b)(\alpha+1)}
\end{equation}
and
\begin{equation}
q=\frac{4(\alpha+1)}{\alpha(N-2)-2+2b}\quad \qquad r=\frac{2N(\alpha+1)}{2(\alpha+1)+N-2b}.
\end{equation}
Note that, the hypothesis \eqref{LGa<0} yields\footnote{Recalling, the equivalence of Sobolev spaces (Remark \ref{equivalence}) holds if $\frac{N}{N-\rho}<r<\frac{N}{1+\rho}$.} $\frac{N}{N-\rho}<r<\frac{N}{1+\rho}$ and $(q,r)$ is S-admissible. Moreover, since $\theta$ is small, $b<\frac{N}{2}$ and $\alpha>\frac{2-2b}{N-2}$, we get $2<r<N$.

\ The proof is similar as in Lemma \ref{LG2}. We first consider (i). Let $A=\{B,B^C\}$. It follows that (using $\frac{1}{2}=\frac{\alpha-\theta}{\bar{a}}+\frac{1}{q}$)
\[
\begin{split}%\label{LG2R0}
\left\|F(x,u,v)\right\|_{L^2_tL_x^{\frac{2N}{N+2}}(A)}&\leq  \||x|^{-b}\|_{L^\gamma(A)}\|u\|^\theta_{L^\infty_tL_x^{\theta r_1}}  \|u\|^{\alpha-\theta}_{L_t^{\bar{a}}L_x^{\bar{r}}}   \| v \|_{L^{q}_tL_x^{r}},
\end{split}
\]
where $\frac{N}{\gamma}=\frac{N+2}{2}-\frac{N}{r_1}-\frac{N(\alpha-\theta)}{\bar{r}}-\frac{N}{r}$. Hence, the values of $\bar{r}$ and $r$ imply% thus %by using the values of $r$ and $\bar{r}$ one has
$$
\frac{N}{\gamma}-b=\frac{\theta(2-b)}{\alpha}-\frac{N}{r_1},
$$
which is the same relation as in \eqref{RI}, so arguing as in Lemma \ref{LG2} and since  ($q,r$) is $S$-admissible we deduce (i). Let us prove (ii). From the equivalence of Sobolev spaces we know that% implies that
\begin{equation}\label{derivada}
\left\|\sqrt{\mathcal{L}_a} (|x|^{-b}|u|^\alpha u)\right\|_{L_t^{2}L^{\frac{2N}{N+2}}_x}\leq \left\|\nabla (|x|^{-b}|u|^\alpha u)\right\|_{L_t^{2}L^{\frac{2N}{N+2}}_x}.    
\end{equation}
We only estimate $|x|^{b-1}|u|^\alpha u$. We deduce that (using $1=\frac{N}{r}-\frac{N}{r_3}$, with $r<N$)
\[
\begin{split}
\left\||x|^{-b-1}|u|^\alpha u\right\|_{L_t^{2}L^{\frac{2N}{N+2}}_x(A)} &\leq  \left\|\||x|^{-b-1}\|_{L^d(A)}\|u\|_{L_{x}^{\theta r_1}}^{\theta} \|u\|^{\alpha-\theta}_{L_{x}^{(\alpha-\theta)r_2}}\|u\|_{L^{r_3}_x}\right\|_{L^{2}_t}\\
& \lesssim   \|u\|_{L^\infty_t H_{x}^{1}}^{\theta}\|u\|^{\alpha-\theta}_{L^{\bar{a}}_tL^{\bar{r}}_x} \|\nabla u \|_{L^{q}_tL^{r}_x},%\\
\end{split}
\] 
where we get the same relation as in \eqref{RI}, i.e., $
\frac{N}{d}-b-1=\frac{\theta(2-b)}{\alpha}-\frac{N}{r_1}
$. Therefore, again the equivalence of Sobolev spaces together with \eqref{derivada} lead to
$$\left\|\sqrt{\mathcal{L}_a} (|x|^{-b}|u|^\alpha u)\right\|_{L_t^{2}L^{\frac{2N}{N+2}}_x}\lesssim   \|u\|_{L^\infty_t H_{a}^{1}}^{\theta}\|u\|^{\alpha-\theta}_{L^{\bar{a}}_tL^{\bar{r}}_x} \|\sqrt{\mathcal{L}_a} u \|_{L^{q}_tL^{r}_x}.
$$
In addition, applying the interpolation inequality and (i) - (ii) we have (iii), that is
\[
\begin{split}
\left\| \sqrt{\mathcal{L}_a}^{s_c}F(x,u)\right\|_{L^{2}_t L^{\frac{2N}{N+2}}_x}
& \lesssim \| u\|^{\theta}_{L^\infty_tH^1_a}\|u\|^{\alpha-\theta}_{L^{\bar{a}}_tL^{\bar{r}}_x}\| u\|^{1-s_c}_{S(L^2)}\| \sqrt{\mathcal{L}_a} u\|^{s_c}_{S(L^2)}. 
\end{split}
\]
\end{proof}
\end{lemma}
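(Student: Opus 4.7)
My plan is to mimic the structure of the proofs of Lemmas \ref{LG1}(iv) and \ref{LG2}, but with admissible pairs chosen carefully so that the equivalence of Sobolev spaces holds when $a<0$. Recall from Remark \ref{equivalence} that when $-\tfrac{(N-2)^2}{4}<a<0$ and $s=1$, the equivalence $\|f\|_{H_a^{1,r}}\sim \|f\|_{H^{1,r}}$ requires $\frac{N}{N-\rho}<r<\frac{N}{1+\rho}$, with $\rho$ as in \eqref{ro}. The hypothesis on $a$ in \eqref{LGa<0} is precisely what is needed to push the admissible pair $(q,r)$ (defined below) into this window; this is the main technical point.

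I would split the argument into the two regimes of \eqref{LGa<0} and define, in each case, three pairs: an $S$-admissible pair $(q,r)$ used on $\sqrt{\mathcal{L}_a}u$, the ``fractional'' pair $(\bar a,\bar r)$ absorbing the $\alpha-\theta$ power of $u$, and an auxiliary $L^{\theta r_1}$-type exponent for the $\theta$ power placed in $L^\infty_t H^1_a$. Concretely, for Case~1 ($N=3$, $\tfrac{4-2b}{3}<\alpha\leq 2-2b$, $0\leq b<\tfrac{1}{2}$) I would set
\begin{equation}
\bar a=\tfrac{2(\alpha-\theta)}{1-\theta},\qquad \bar r=\tfrac{3\alpha(\alpha-\theta)}{\alpha(1-b)-\theta(2-b-\alpha)},\qquad q=\tfrac{2}{\theta},\qquad r=\tfrac{6}{3-2\theta},
\end{equation}
and for Case~2 ($3\le N\le 5$, $\max\{\tfrac{2-2b}{N-2},\tfrac{4-2b}{N},1\}<\alpha<\tfrac{4-2b}{N-2}$),
\begin{equation}
\bar a=\tfrac{4(\alpha+1)(\alpha-\theta)}{4-2b-\alpha(N-4)},\qquad \bar r=\tfrac{2N\alpha(\alpha+1)(\alpha-\theta)}{\alpha^2(N-2b)-\theta(4-2b)(\alpha+1)},
\end{equation}
\begin{equation}
q=\tfrac{4(\alpha+1)}{\alpha(N-2)-2+2b},\qquad r=\tfrac{2N(\alpha+1)}{2(\alpha+1)+N-2b}.
\end{equation}
After these choices I would verify, for $\theta>0$ sufficiently small: $(q,r)$ is $S$-admissible, $2<r<N$ (needed for Sobolev embedding and to write $|x|^{-b-1}$ in the right $L^d$), $\tfrac{1}{2}=\tfrac{\alpha-\theta}{\bar a}+\tfrac{1}{q}$ (Hölder in time), and, crucially, $\frac{N}{N-\rho}<r<\frac{N}{1+\rho}$, which is where the extra condition on $a$ in \eqref{LGa<0} enters.

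For part (i), splitting $\mathbb{R}^N = B\cup B^C$, I would apply Hölder in space to get
\begin{equation}
\|F(x,u,v)\|_{L^{\tfrac{2N}{N+2}}_x(A)}\lesssim \||x|^{-b}\|_{L^\gamma(A)}\|u\|^\theta_{L^{\theta r_1}_x}\|u\|^{\alpha-\theta}_{L^{\bar r}_x}\|v\|_{L^r_x},
\end{equation}
followed by Hölder in time. A direct computation gives the key identity
\begin{equation}
\tfrac{N}{\gamma}-b=\tfrac{\theta(2-b)}{\alpha}-\tfrac{N}{r_1},
\end{equation}
which matches \eqref{RI} in the proof of Lemma \ref{LG1}(iv). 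Choosing $\theta r_1=\tfrac{2N}{N-2}$ on $B$ (so $\tfrac{N}{\gamma}-b>0$) and $\theta r_1=2$ on $B^C$ (so $\tfrac{N}{\gamma}-b<0$) makes $|x|^{-b}\in L^\gamma(A)$ by \eqref{xb} and secures $H^1\hookrightarrow L^{\theta r_1}$. For part (ii), I would first apply the equivalence of Sobolev spaces in $L^{\tfrac{2N}{N+2}}_x$, distribute the gradient as $|x|^{-b}|u|^\alpha\nabla u$ plus $|x|^{-b-1}|u|^\alpha u$, bound the first by (i) with $v=\nabla u$, and the second by an identical Hölder scheme with $\tfrac{N}{d}-b-1=\tfrac{\theta(2-b)}{\alpha}-\tfrac{N}{r_1}$; then apply Sobolev equivalence once more on the output (valid since $r<\tfrac{N}{1+\rho}$). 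Part (iii) follows from (i), (ii), and complex interpolation on $\|\sqrt{\mathcal{L}_a}^{s_c}F(x,u)\|_{L^2_tL^{\tfrac{2N}{N+2}}_x}$ with weights $1-s_c$ and $s_c$.

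The main obstacle is verifying the admissibility window $\frac{N}{N-\rho}<r<\frac{N}{1+\rho}$ for the chosen $r$; this is exactly where the lower bound $a>-\tfrac{(N-2)^2}{4}+\bigl(\tfrac{\alpha(N-2)-(2-2b)}{2(\alpha+1)}\bigr)^2$ (and its Case~1 analogue) appears, by translating the inequality $r<\tfrac{N}{1+\rho}$ into a quadratic constraint on $\rho$, hence on $a$. The restriction $0<b<\tfrac{6-N}{2}$ is in turn what allows $r>\tfrac{N}{N-\rho}$ to coexist with the Hölder balance and $r<N$ in dimensions $N=3,4,5$. Once these inclusions are verified, the rest of the computation is entirely parallel to Lemmas \ref{LG1}(iv) and \ref{LG2}.
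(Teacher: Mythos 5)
Your proposal is correct and follows essentially the same route as the paper: identical choices of the pairs $(\bar a,\bar r)$ and $(q,r)$ in both cases, the same $B$/$B^C$ H\"older splitting with $\theta r_1=\tfrac{2N}{N-2}$ and $\theta r_1=2$ reducing to the relation \eqref{RI}, the same treatment of the two derivative terms in (ii), and interpolation for (iii). The only difference is expository: you make explicit the verification that $r$ lies in the Sobolev-equivalence window $\tfrac{N}{N-\rho}<r<\tfrac{N}{1+\rho}$, which the paper asserts more briefly.
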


\begin{proof}[\bf Proof of Theorem \ref{GWP2}]
%With the previous lemma in hand the proof is exactly the same as in Theorem \ref{GWP1}. We will omit it.

First, we define %note that by the Sobolev inequality it follows that
\begin{equation}\label{pbarra2}
\bar{p}=\frac{N\alpha (\alpha-\theta)}{\alpha s_c(\alpha-\theta)+\alpha(1-b)-\theta(2-b-\alpha)},     
\end{equation}
if Case $1$ of Lemma \ref{LG3} holds and
\begin{equation}\label{pbarra3}
\bar{p}=\frac{2N\alpha(\alpha+1) (\alpha-\theta)}{2\alpha s_c(\alpha+1)(\alpha-\theta)+\alpha^2(N-2b)-\theta(4-2b)(\alpha+1)},    
\end{equation}
if Case $2$ of Lemma \ref{LG3} holds.
Note that,  One has that $(\bar{a},\bar{p})$ is $S$-admissible and $s_c=\frac{N}{\bar{p}}- \frac{N}{\bar{r}}$. Moreover, $\bar{p} <\frac{N}{s_c}$, for $\theta$ small enough and $2 < \bar{p} <\frac{2N}{N-2}$. Hence, applying Sobolev embedding we get
\begin{eqnarray*}
\|\int_0^t e^{-i(t-s)\mathcal{L}_a} F(x,u)ds\|_{L_t^{\bar{a}}L_x^{\bar{r}}}&\leq& \left\|D^{s_c}\int_0^t e^{-i(t-s)\mathcal{L}_a} F(x,u)ds\right\|_{L_t^{\bar{a}}L_x^{\bar{p}}}.
\end{eqnarray*}
where\footnote{Note that $\bar{a}\geq 2$ since $\alpha>1$, which is important to conclude that $(\bar{a},\bar{p})$ is $S$-admissible.} $\bar{a}$ and $\bar{r}$ are defined in Lemma \ref{LG3}. Thus, with the previous lemma in hand the rest the proof is exactly the same as in Theorem \ref{GWPN=3}.
\end{proof}

\section{Scattering and existence of wave operator}\label{Sec5}

Our goal here is to show Theorem \ref{Wave operator}. It gives us a criterion to establish scattering and the existence of wave operator. To this end, we use the estimates on the nonlinearity obtained in Section \ref{Sec4}. Before proving the theorem itself, we must point out that our estimates in Lemmas \ref{LG1}, \ref{LG2N=3a}, and \ref{LGa<0} also hold if we replace the norms (in time) on the whole $\mathbb{R}$ by a interval $I\subset \mathbb{R}$. To see this it is sufficient to observe that in all results the only estimates in time we used was the H\"older inequality. We will only prove the result by assuming the assumptions of Theorem \ref{GWP1} holds. The other cases are
dealt with similarly.

\begin{proof}[\bf{Proof of Theorem \ref{Wave operator}}] 

\ 

\ Let's first prove show (i). We claim that if $\|u\|_{S(\dot{H}^{s_c})}<+\infty$, then
\begin{equation}\label{SCATTER1}
\|\langle\sqrt{\mathcal{L}_a}\rangle u\|_{S(L^2)}<+\infty.
\end{equation}
Indeed, we only show $\|\langle\sqrt{\mathcal{L}_a}\rangle u\|_{S(L^2;[0,\infty))}<+\infty$. A similar analysis may be performed to see that $ \|\langle\sqrt{\mathcal{L}_a}\rangle u\|_{S(L^2;(\infty,0])}<+\infty$. Given $\delta>0$ we decompose the interval $[0,\infty)$ into $n$ intervals $I_j=[t_j,t_{j+1})$ such that $\|u\|_{S(\dot{H}^{s_c};I_j)}<\delta$, for all $j=1,\ldots,n$. On the time interval $I_j$ we consider the integral equation
\begin{equation}\label{SCATTER2}
u(t)=e^{-i(t-t_j)\mathcal{L}_a}u(t_j)+i\lambda\int_{t_j}^{t}e^{-i(t-s)\mathcal{L}_a}(|x|^{-b}|u|^\alpha u)(s)ds.
\end{equation}
Combining the Strichartz estimates \eqref{SE1} and \eqref{SE3} together with Lemma \ref{LG1}, it follows that
\begin{equation}\label{SCATTER3}
\begin{split}
\|\langle\sqrt{\mathcal{L}_a}\rangle u\|_{S(L^2;I_j)}&\leq c\|u(t_j)\|_{H^1_a}+ c\left\|\langle\sqrt{\mathcal{L}_a}\rangle(|x|^{-b}|u|^\alpha u) \right\|_{L^2_{I_j}L_x^{\frac{2N}{N+2}}}\\
& \leq c\|u(t_j)\|_{H^1_a}+c\| u\|^{\theta}_{L^\infty_{I_j}H^2_x}\|u\|^{\alpha-\theta}_{S(\dot{H}^{s_c};I_j)} \|  u\|_{S(L^2;I_j)}\\
&\leq c\|u(t_j)\|_{H^1_a} +cM^\theta \delta^{\alpha-\theta}\| \langle\sqrt{\mathcal{L}_a}\rangle u\|_{S(L^2;I_j)},
\end{split}
\end{equation}	 
where we have used the assumption $\sup_{t\in I_j}\|u(t)\|_{H^1_a}\leq M$. Taking  $\delta>0$ such that $ \eta^\theta\delta^{\alpha-\theta}<\frac{1}{2c}$ we obtain that $\|\langle\sqrt{\mathcal{L}_a}\rangle u\|_{S(L^2;I_j)} \leq 2cM,
$ and by summing over the n intervals, we conclude the proof of \eqref{SCATTER1}. 

Returning to the proof of the theorem, define
$$
\phi^+=u_0+i\lambda\int\limits_{0}^{+\infty}e^{i(s)\mathcal{L}_a}\left(|x|^{-b}|u|^\alpha u\right)(s)ds.
$$
Note that $\phi^+ \in H^1_a(\mathbb{R}^N)$. Following the above steps, one has
\begin{equation}
\|\langle \sqrt{\mathcal{L}_a}\rangle \phi^+\|_{L^2}\leq  c\| u_0\|_{H^1_a}+c\| u\|^{\theta}_{L^\infty_tH^1_a}\|u\|^{\alpha-\theta}_{S(\dot{H}^{s_c})} \| \langle \sqrt{\mathcal{L}_a}\rangle u\|_{S(L^2)}. 
\end{equation}
Hence, applying \eqref{SCATTER1} we get the desired result.
  
Since $u$ is a solution of \eqref{INLSa}, a simple inspection gives
$$
 u(t)-e^{-it\mathcal{L}_a}\phi^+=-i\lambda\int\limits_{t}^{+\infty}e^{-i(t-s)\mathcal{L}_a}|x|^{-b}(|u|^\alpha u)(s)ds,
$$
thus as before  
$$
 \|u(t)-e^{-it\mathcal{L}_a}\phi^+\|_{H^1_a}\leq c \| u \|^\theta_{L^\infty_tH^1_a}\| u \|^{\alpha-\theta}_{S(\dot{H}^{s_c};[t,\infty))}\|\langle \sqrt{\mathcal{L}_a}\rangle u\|_{S(L^2)}.
$$
By using \eqref{SCATTER1} we get  $\|u\|_{S(\dot{H}^{s_c};[t,\infty))}\rightarrow 0$ as $t \rightarrow +\infty$, this implies that 
\begin{equation}\label{scat1}
\|u(t)-e^{-it\mathcal{L}_a}\phi^+\|_{H^1_a}\rightarrow 0, \,\,\textnormal{as}\,\,t\rightarrow +\infty.
\end{equation}

\ In the same way, let $
\phi^-=u_0+i\lambda\int\limits_{0}^{-\infty}e^{i(s)\mathcal{L}_a}\left(|x|^{-b}|u|^\alpha u\right)(s)ds$, 
so that we obtain $\phi^{-} \in H^1_a$ and $\|u(t)-e^{-it\mathcal{L}_a}\phi^+\|_{H^1_a}\rightarrow 0, \,\,\textnormal{as}\,\,t\rightarrow -\infty$. This completes the proof of (i). 

\ 

\ We now consider (ii). We will divide the proof in two parts. We first look for a fixed point for the operator
\begin{equation}%\label{IEWO1} % IE=integral equation wave oper 1
G(w)(t)=- i\lambda \int_t^{+\infty} e^{-i(t-s)L_a}(|x|^{-b}|w+e^{-itL_a}\phi|^\alpha (w+e^{-itL_a}\phi)(s)ds,\;\;t\in I_T,
\end{equation}
where $I_T=[T,+\infty)$ for $T\gg 1$. In the sequel, we show that $u$ defined by
$$
u(t)=e^{-it\mathcal{L}_a}\phi+w(t)
$$
is a solution of \eqref{INLSa}. To this end, let us start by proving that $G$ has a fixed point in $S(T,\rho)$ given by
$$
S(T,\rho)=\{w\in C\left(I_T;H^1_a(\mathbb{R}^N)\right):\; \|w\|_{T}:=\|w\|_{S(\dot{H}^{s_c};I_T)}+\|\langle \sqrt{\mathcal{L}_a}\rangle w\|_{S(L^2;I_T)}\leq \rho   \}.
$$

%  with the metric 
%  $$
%  d_T(u,v)=\|w\|_{S(\dot{H}^{s_c};[T,\infty))}+\|w\|_{S(L^2;[T,\infty))}.
%  $$
\ The Strichartz estimates \eqref{SE3}, \eqref{SE5} and Lemmas \ref{LG1} yield  
\begin{align}\label{EWO1}    % esisten wave opera 1
\| G(w) \|_{S(\dot{H}^{s_c};I_T)}\;\lesssim\; &  \| w+e^{-it\mathcal{L}_a}\phi\|^\theta_{L^\infty_{T}H^1_a}\| w+e^{-it\mathcal{L}_a}\phi \|^{\alpha-\theta}_{S(\dot{H}^{s_c};I_T)}\| w+e^{-it\mathcal{L}_a}\phi \|_{S(\dot{H}^{s_c};I_T)}
\end{align}
and
\begin{align}\label{EWO3}
\|\langle \sqrt{\mathcal{L}_a}\rangle G(w) \|_{S(L^2;I_T)}\; \lesssim\; &  \| w+e^{-it\mathcal{L}_a}\phi\|^\theta_{L^\infty_{T}H^1_a}\| w+e^{-it\mathcal{L}_a}\phi \|^{\alpha-\theta}_{S(\dot{H}^{s_c};I_T)}\| \langle \sqrt{\mathcal{L}_a}\rangle (w+e^{-it\mathcal{L}_a}\phi) \|_{S(L^2;I_T)}.
\end{align}	  
Hence,
\begin{eqnarray*}%\label{EWO4}
\|G(w)\|_{T} &\lesssim &  \| w+e^{-it\mathcal{L}_a}\phi\|^\theta_{L^\infty_{T}H^1_x}\| w+e^{-it\mathcal{L}_a}\phi \|^{\alpha-\theta}_{S(\dot{H}^{s_c};I_T)}\| w+e^{-it\mathcal{L}_a}\phi \|_{T} \nonumber.
\end{eqnarray*}
   
\ In view of\footnote{Note that \eqref{U(t)phi} might not be true in the norm $L^{\infty}_{I_T}L^{\frac{2N}{N-2s_c}}_x$ and for this reason we remove $\left(\infty,\frac{2N}{N-2s_c}\right)$ in the definition of $\dot{H}^{s_c}$-admissible pair. More precisely, as we use Lemma \ref{LG1} to the proof and we did not use this pair to prove it.} 
\begin{equation}\label{U(t)phi}
\| e^{-it\mathcal{L}_a}\phi \|_{S(\dot{H}^{s_c};I_T)}\rightarrow 0
\end{equation}
as $T\rightarrow +\infty$, we can find $T_0>0$ large enough and $\rho>0$ small enough such that $G$ is well defined on $S(T_0,\rho)$. In the same way, we show that $G$ is a contraction on $B(T_0,\rho)$. Therefore, there exists a unique $w\in S(T_0,\rho)$ such that $G(w)=w$. 
 
\ On the other hand, by using the fact that $
\| w+e^{-it\mathcal{L}_a}\phi\|_{L^\infty_{T}H^1_a}\leq \|w\|_{H^1} +\|\phi\|_{H^1}<+\infty
$
and \eqref{EWO1} we deduce
\begin{eqnarray*}    % esisten wave opera 1
\| w \|_{S(\dot{H}^{s_c};I_T)} &\lesssim &  \| w+e^{-it\mathcal{L}_a}\phi \|^{\alpha-\theta}_{S(\dot{H}^{s_c};I_T)}\| w+e^{-it\mathcal{L}_a}\phi \|_{S(\dot{H}^{s_c};I_T)}\\
 &\lesssim &  A\| w\|_{S(\dot{H}^{s_c};I_T)} +A\|e^{-it\mathcal{L}_a}\phi \|_{S(\dot{H}^{s_c};I_T)},
\end{eqnarray*}
where $A=\| w+e^{-it\mathcal{L}_a}\phi \|^{\alpha-\theta}_{S(\dot{H}^{s_c};I_T)}$. Observe that, if $\rho$ has been chosen small enough and since $\|e^{-it\mathcal{L}_a}\phi\|_{S(\dot{H}^{s_c};I_T)}$ is also sufficiently small for $T$ large, it follows that
$$
A\leq c\| w\|^{\alpha-\theta}_{S(\dot{H}^{s_c};I_T)}+c\| e^{-it\mathcal{L}_a}\phi \|^{\alpha-\theta}_{S(\dot{H}^{s_c};I_T)}<\frac{1}{2}.
$$
Thus, using the last two inequalities we obtain $
\frac{1}{2} \| w \|_{S(\dot{H}^{s_c};I_T)} \lesssim A \|e^{-it\mathcal{L}_a}\phi \|_{S(\dot{H}^{s_c};I_T)},
$ consequently
\begin{equation}\label{EWO5}
  \| w \|_{S(\dot{H}^{s_c};I_T)}\rightarrow 0\;\;\;\;\textnormal{as}\;\;\;\; T\rightarrow +\infty.
\end{equation} 
This also implies that \footnote{Here, we use the relations \eqref{EWO3} and $\eqref{EWO5}$.} $
\|\langle \sqrt{\mathcal{L}_a} \rangle
 w \|_{S(L^{2};I_T)}\rightarrow 0\;\;\;\;\textnormal{as}\;\;\;\; T\rightarrow +\infty,
 $
and so%finally
\begin{equation}\label{EWO6}
\|w\|_{T}\rightarrow 0 \;\; \textnormal{as}\;\; T\rightarrow +\infty.  
\end{equation}
   
\ We now prove that $u(t)=e^{-it\mathcal{L}_a}\phi+w(t)$ satisfies \eqref{INLSa} in the time interval $[T_0,\infty)$. Indeed, we need to show that
 \begin{equation}\label{CWO} %calim de wave operat
 u(t)=e^{-i(t-T_0)\mathcal{L}_a}u(T_0)+i\lambda\int_{T_0}^{t}e^{-i(t-s)\mathcal{L}_a}(|x|^{-b}|u|^\alpha u)sds,
 \end{equation}
 for all $t\in [T_0,\infty)$. To do that, in view of
 $$
 w(t)=- i\lambda \int_t^\infty e^{-i(t-s)\mathcal{L}_a}|x|^{-b}|w+e^{-it\mathcal{L}_a}\phi|^\alpha (w+e^{-it\mathcal{L}_a}\phi)(s)ds,
$$
we have
 \begin{eqnarray*}
e^{-i(T_0-t)\mathcal{L}_a} w(t)&=&- i \lambda\int_t^\infty e^{-i(T_0-s)\mathcal{L}_a}|x|^{-b}|w+e^{-it\mathcal{L}_a}\phi|^\alpha (w+e^{-it\mathcal{L}_a}\phi)(s)ds\\
 &=& i\lambda\int_{T_0}^t e^{-i(T_0-s)\mathcal{L}_a}|x|^{-b}|w+e^{-it\mathcal{L}_a}\phi|^\alpha (w+e^{-it\mathcal{L}_a}\phi)(s)ds+w(T_0),
 \end{eqnarray*}
 and so applying $e^{-i(t-T_0)\mathcal{L}_a}$ and adding $e^{-it\mathcal{L}_a}\phi$ on both sides, we obtain \eqref{CWO}. 
 %$$
 %w(t)=e^{-i(t-T_0)\mathcal{L}_a}w(T_0)+ i\lambda \int_{T_0}^t e^{-i(t-s)\mathcal{L}_a}|x|^{-b}|w+e^{-it\mathcal{L}_a}\phi|^\alpha (w+e^{-it\mathcal{L}_a}\phi)(s)ds.
 % $$
 Finally, adding $e^{-it\mathcal{L}_a}\phi$ to both sides of the last equation, we deduce \eqref{CWO}.
  
 \ Finally, since $u(t)=e^{-it\mathcal{L}_a}\phi+w$ and applying \eqref{EWO6}, we conclude that 
 \begin{align*}%\label{EWO7}
 \|u(t)-e^{-it\mathcal{L}_a}\phi\|_{L^\infty_TH^1_x}=\|w\|_{L^\infty_TH^1_x}\leq c\|w\|_{S(L^2;I_T)}+c\|\nabla w\|_{S(L^2;I_T)}\leq c\|w\|_{T}\rightarrow 0\;\;\textnormal{as}\;\;T\rightarrow \infty,
\end{align*}
which completes the proof. %(using \eqref{EWO2}). 

\end{proof}

\bibliographystyle{abbrv}
\bibliography{bib}	

 \end{document}